\newtheorem{theorem}{Theorem}[section]
\newtheorem{lemma}[theorem]{Lemma}
\newtheorem{proposition}[theorem]{Proposition}
\theoremstyle{definition}
\newtheorem{definition}[theorem]{Definition}
\newcommand{\m}[1]{\bigl(\begin{smallmatrix}#1\end{smallmatrix}\bigr)}
\newcommand{\M}[1]{\begin{pmatrix}#1\end{pmatrix}}
\newcommand{\mycomment}[1]{}
\newcommand{\ii}{^{-1}}
\title{The Free Functional Calculus in General}
\author{Julian Bushelli }
\date{June 2025}
\begin{document}

\maketitle

\section*{Abstract}

The classical theory of free analysis generalizes the noncommutative (nc) polynomials and rational functions, easily providing such results as an nc analogue of the Jacobian conjecture. However, the classical theory misses out on important functions, such as the Schur complement. This paper presents a generalization of free functions, viewing them as a natural categorial structure: functors between functor categories that commute with natural transformation. We study this construction on general additive categories; we define, characterize and categorize certain sorts of free maps, such as polynomials and rational expressions, and then prove an analogue of the inverse function theorem, demonstrating a natural lifting of proofs into this broader context.

  We then develop some algebraic basis for this theory: we construct vector spaces and an additive category of free polynomials, and define a class of products that allows us to form composition rings on any vector space of free nc polynomials.

\section{Introduction}
The theory of free analysis has been very successful generalization of non-commuting polynomials and rational functions. However, as evidenced in [PPT], there are many functions that clearly have the same flavor as these free functions,  but are only free functions in special cases.

Of primary importance among these functions are the Schur complement, the Sherman-Morrison-Woodbury formula, the principal pivot transform, and the block $2 \times 2$ inverse formula. 

The Sherman-Morrison-Woodbury formula for rank-$k$ updates tells us that \[(A+UCV)^{-1}= A\ii-A\ii U(C\ii+VA\ii U)\ii VA\ii\]
Where $A$ is $n\times n$, $C$ is $k\times k$ (both invertible) and $U$ is $n\times k$ and $V$ is $k\times n$. When $k = n$ this is simply an nc rational function, and hence a free function; typically, however, $k$ is much smaller than $n$ (often $k=1$). 

The principal pivot transform is the involution map 
\[ppt_A\M{A & B\\C&D} = \M{A\ii& -A\ii B\\ -CA\ii & D-CA\ii B}.\]

This is defined on any block $2\times 2$ matrix with invertible $A$ (or $D$, with adjustment). In the case where all the blocks are square of the same size, the free inverse function theorem easily tells us that this function is invertible; further, the authors of [PPT] were able to extract monotonicity certificates to verify a positivity conjecture based off the square results similar to [RSoS]. 

Finally, the block $2\times 2$ inversion formula tells us that as long as $A$ (or $D$ with obvious adjustment) is invertible, then \[\M{A & B\\C&D}\ii = \M{A\ii+A\ii B(D-CA\ii B)\ii CA\ii&-A\ii B(D-CA\ii B)\ii\\-(D-CA\ii B)\ii CA\ii&(D-CA\ii B)\ii}.\]

As with all of these, the structure clearly suggests a free function theoretic approach. But we have a problem; the current theory requires $A, B, C, D$ to be square matrices of the same size, or otherwise operators from a space to itself.

It has been shown that invariant structure preserving functions turn out to be free functions [Inv]; as a result, it is natural to expand the notion of a free function, discussing a yet broader sort of structure preservation. We develop these nc free maps as a natural categorial structure: free maps are those functors between functor categories that commute with natural transformations, thus preserving the structure of the functor categories.

We develop a theory of free maps over additive categories that allows the techniques of free analysis to be applied to functions with not-necessarily-composable entries. Some categorization of these free maps is made, and an identification which justifies viewing free maps in terms of abstract variables. The notion of a derivative is constructed, and an inverse function theorem is demonstrated. 

In the process we observe a guiding principle: \textit{natural proofs lift naturally.} Hence in addition to an inverse function theorem, we lift an automatic differentiation theorem and a bianalyticity theorem from free analysis. 

Finally we restrict ourselves to the setting most analogous to that of free polynomials, and construct vector space, additive category, and ring structures, thus clarifying the underlying algebraic structure.

\section{Preliminary Review}
We will now provide a brief overview of operator theory and category theory. The following section includes an overview of the standard operator theory used within this paper, as well as a brief survey of some important results in free analysis. Following that is an introduction to the language of category theory; of primary importance is the notion of a functor (in many contexts called a representation) and the maps between functors. 

\subsection{Operator Theory}
We will give only a brief introduction to general operator theory, focusing primarily on matrices, before discussing the more specific field of free analysis. Those who are familiar with the broader topics of operator theory will see how deeply the theorems in this paper can extend, while those who stick to the matrix case will still find the whole paper comprehensible and informative. 

In operator theory we are often interested in linear transformations between vector spaces; indeed, the words operator, transformation, and map are all more or less synonymous with function, their difference only being a ``flavor" conventionally ascribed. 

This so-called ``linearity" is an abstraction of the integral, specifically the property \[\int \alpha f(x)+ g(x)dx = \alpha\int f(x)dx + \int g(x)dx.\]

Thus we say that an operator between vector spaces $A:V\rightarrow W$ is \textbf{linear} if \[A(\alpha x+y) = \alpha A(x)+A(y)\] for $x, y\in V$ and some scalar $\alpha$. 

The integral is an important example of a linear operator, but is not alone; the derivative is linear, as are many other more esoteric operations. 

Due to its relative simplicity, there has been a great study of the operator theory of finite-dimensional vector spaces; indeed, all linear operators between finite dimensional vector spaces are continuous, bounded, and described by matrices. 

For example, consider the operator $A: \mathbb R_2[x] \rightarrow \mathbb R$ given by $Af = \int_{-1}^1f(x)dx$. 

Note that $\mathbb R_2[x]$ is the set of polynomials with real coefficients that have degree at most two; that is, $f \in \mathbb R_2[x]$ will look like $f(x) = ax^2+bx+c$ where $a,b,c \in \mathbb R$; this can be represented by a vector $\m{a\\b\\c}\in \mathbb R^3$. 

Then we have \[\int_{-1}^1 ax^2+bx+c dx = 2a+\frac{2}{3}c = \M{2 & 0 &\frac{2}{3}}\M{a \\b\\c}.\]
Thus $A$ can be considered to be the matrix $\m{2&0&\frac{2}{3}} \in M_{1,3}(\mathbb R): \mathbb R^3 \rightarrow \mathbb R$. 

Similarly, the derivative operator $\frac{d}{dx}: \mathbb R_2[x]\rightarrow \mathbb R_1[x]$ can be represented as a matrix $A: \mathbb R^3 \rightarrow \mathbb R^2$. If we identify $ax^2+bx+c = \m{a\\b\\c}$, then the derivative is given by \[\M{2&0&0\\0&1&0}\M{a \\b\\c}= \M{2a\\b}.\]
In general we concern ourselves with $A:\mathbb C^n \rightarrow \mathbb C^m$, which is to say $A \in M_{m,n}(\mathbb C)$, and then apply the sort of identities used above when we need to. 

We will briefly discuss the so-called hermitian or self-adjoint matrices. A hermitian matrix satisfies the relation $A^* = A$; that is, the conjugate transpose of $A$ is itself $A$. Hermitian matrices are important in the theory of matrix positivity,  as most positivity theorems are proven on self-adjoint matrices. We say that a hermitian matrix is positive if all of its eigenvalues are positive; this definition is possible because a hermitian matrix has strictly real eigenvalues.

\subsubsection{Univariate Matrix Functions}
Evaluation of univariate functions on square matrices has a long and storied history; they in some sense form the backbone of matrix theory.

Suppose we are given a single variable polynomial $p \in \mathbb C[x]$ with complex coefficients, namely \[p(x) = \sum_{i=0}^nk_ix^i.\] Then if we select any bounded operator $X$ from a Banach space to itself, we can evaluate $p$ by substituting $X$ for $x$, where we define $X^0$ to be the identity operator. When dealing with matrices, this leads to such results as the Cayley-Hamilton theorem. 

\begin{theorem}[Cayley-Hamilton]
    A matrix satisfies its own characteristic polynomial.
\end{theorem}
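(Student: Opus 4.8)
The plan is to prove this via the adjugate (classical adjoint) identity, which turns the statement into a bookkeeping exercise in matching coefficients of a polynomial with matrix coefficients. Let $A \in M_n(\mathbb C)$ and let $p(x) = \det(xI - A) = \sum_{i=0}^n c_i x^i$ be its characteristic polynomial, so that the claim is $p(A) = \sum_{i=0}^n c_i A^i = 0$. I would begin by recording the warning that one cannot simply substitute $x = A$ into the scalar identity $p(x) = \det(xI - A)$ to conclude $p(A) = \det(AI - A) = 0$; the right-hand side is not a matrix-valued object one is free to evaluate in this way, and this ``proof'' is a well-known fallacy.

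Instead, first I would consider the adjugate $B(x) = \operatorname{adj}(xI - A)$, whose entries are cofactors of $xI - A$ and hence polynomials in $x$ of degree at most $n-1$. Collecting powers of $x$, write $B(x) = \sum_{i=0}^{n-1} B_i x^i$ with each $B_i \in M_n(\mathbb C)$ a constant matrix. The key input is the defining identity of the adjugate, $B(x)(xI - A) = \det(xI - A)\, I = p(x) I$, which holds as an identity of matrices whose entries are polynomials in the single commuting variable $x$.

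Next I would expand both sides and equate the coefficient matrices of each power $x^i$ for $i = 0, 1, \dots, n$. This produces a finite system of matrix equations relating the $B_i$, the $c_i$, and $A$. The decisive step is to multiply the equation attached to $x^i$ on the right by $A^i$ and sum over $i$: on the left the terms telescope and cancel in consecutive pairs (this works precisely because $A$ commutes with itself, so products such as $B_i A \cdot A^i = B_i A^{i+1}$ line up correctly), while on the right one is left with $\sum_i c_i A^i = p(A)$. The telescoping collapse then forces $p(A) = 0$.

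The main obstacle is conceptual rather than computational: the whole argument must be organized so that $x$ is only ever specialized in a manner compatible with the noncommutativity of $M_n(\mathbb C)$. Working in the polynomial ring $M_n(\mathbb C)[x]$ over the commuting indeterminate $x$, and only specializing via right-multiplication by powers of $A$ (which commute with the scalars $c_i I$ and, crucially, with $A$ itself), is what keeps the telescoping honest; this is exactly the subtlety that the naive substitution above ignores. As an alternative route worth noting, the map $A \mapsto p_A(A)$ is itself polynomial in the entries of $A$ and therefore continuous, and it plainly vanishes on the dense set of diagonalizable matrices, where the claim is immediate by evaluating on eigenvalues; it must therefore vanish identically. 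This density argument carries the same flavor as the analytic arguments used elsewhere in the paper.
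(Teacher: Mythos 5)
Your proposal is correct, and its primary route --- the adjugate identity $\operatorname{adj}(xI-A)\,(xI-A)=p(x)I$, coefficient matching in $M_n(\mathbb{C})[x]$, and the telescoping sum after right-multiplying by powers of $A$ --- is a genuinely different argument from the paper's. The paper justifies Cayley--Hamilton by the similarity/spectral route: for diagonalizable $X=S^{-1}DS$ it computes $p(X)=S^{-1}p(D)S$, observes that $p(D)$ is the diagonal matrix of the values $p(a_i)$ at the eigenvalues (which vanish because the eigenvalues are precisely the roots of the characteristic polynomial), and then disposes of the non-diagonalizable case by appeal to Schur triangularization or Jordan form. Your adjugate argument buys algebraic generality: it requires no existence of eigenvalues (hence no algebraic closure), no topology, and in fact works verbatim over any commutative ring; it also isolates exactly the commutativity being used (only scalars and powers of $A$ ever multiply the matrix coefficients), which is what legitimately defuses the naive-substitution fallacy you flag at the outset. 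The paper's approach buys thematic continuity with the rest of the text: it exhibits Cayley--Hamilton as a consequence of the two invariances that organize the whole paper --- respecting similarity and evaluation on diagonal structure --- at the price of needing either a density/continuity argument or the Jordan canonical form to reach all matrices. Your closing alternative (continuity of $A\mapsto p_A(A)$ plus density of diagonalizable matrices over $\mathbb{C}$) is essentially the completed, rigorous version of the paper's sketch, so your proposal in effect contains the paper's proof as a remark while resting its main weight on a more elementary and more general argument.
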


The characteristic polynomial of $X$ is given by $p(\lambda) = det(X-\lambda I)$, and has as its zeros the eigenvalues of $X.$ The Cayley-Hamilton theorem then tells us that $p(X) =0$ (the appropriate zero matrix); this result is subtler than it first appears.

A famous result in matrix analysis is that almost every matrix is similar to a diagonal matrix of its eigenvalues That is, $X = S\ii DS$ for some invertible matrix $S$, and $D$ with the eigenvalues of $X$ on the diagonal and zero everywhere else. Then if we consider any polynomial of degree $n$, we see that \[p(X) = \sum_{i=0}^nk_iX^i= \sum_{i=0}^nk_i(S\ii DS)^i=\sum k_iS\ii D^iS=S\ii p(D)S\]
because $S\ii S=I$. But in turn, it is easy to see that with a diagonal matrix \[p\M{a_1 & \dots & 0\\\vdots&\ddots &\vdots\\0&\dots &a_n} = \M{p(a_1) & \dots & 0\\\vdots&\ddots &\vdots\\0&\dots &p(a_n)}.\]
A very similar method can be employed with non-diagonalizable matrices using either the Schur triangularization or the Jordan canonical form; the difference there is that above the diagonal there will be some non-zero entries. 

Consider now the standard nilpotent matrix of size $n$, which has ones on its first super-diagonal and zeros elsewhere, such as \[N_5 = \M{0&1&0 & 0 & 0
\\0&0&1&0&0\\0&0&0&1&0\\0&0&0&0&1\\0&0&0&0&0}.\]
The standard nilpotent matrix of size $n$ satisfies the equation $N_n^n = 0$. Indeed, each power of $N$ has the ones on the next super-diagonal; that is, \[N_5^2 = \M{0&0&1&0&0\\&&0&1&0\\&&&0&1\\&&&&0\\&&&&0}\] where there is a zero anywhere left blank. As usual, $N_n^0 = I_n$, the identity matrix of size $n$. Thus plugging $N_5$ into a univariate polynomial, we can read off from the resulting matrix the coefficients of $x^0, \dots, x^4$ simply by reading the top line of the resulting matrix. As an example, if we have the polynomial $p(x) = 1+4x+3x^3$ and plug in the nilpotent $N_3$, we obtain \[p(N_3) = I+4N_3+3N_3^3 = \M{1 &4&0\\&1&4\\&&1}.\] We thus can read the coefficients; note that can retain the powers of $x^3$ simply by choosing a larger matrix. This is of little use when dealing with such small polynomials, but can be revealing when dealing with more elaborate functions. 

\subsubsection{Classical Polynomial Generalizations}

A historical generalization of noncommutative polynomials is the so-called holomorphic functional calculus, which takes certain complex-analytic functions and evaluates them on operators. 

Given a complex holomorphic function $f(z) = \sum_{i=0}^\infty k_iz^i$ that has some radius of convergence $r$, we can define a function on matrices by  \[f(T)= \sum_{i=0}^\infty k_i T^i\]
so long as $\|T\| < r$. Thus if $f$ is entire, like the exponential function, $f(T)$ is defined for any matrix. Thus $e^T = \sum \frac{T^i}{i!}$. 

Note that for analytic functions evaluation on the nilpotents $N_n$ can be valuable for numerical approximation; we can find an arbitrarily good polynomial approximation by evaluating it on a matrix input. 

Another historical generalization of the univariate polynomials on operators is the Borel functional calculus; if we choose a self-adjoint operator on a Hilbert space, or in the matrix case a hermitian matrix, we can evaluate it on any Borel-measurable function and get reasonable outputs. This framework enables good positivity results.

\subsection{Free Analysis}
Free analysis is concerned with matrix-valued functions that behave uniformly across all matrix sizes, by respecting direct sums and similarity and thus generalizing the free noncommutative polynomials. 

Free functions are defined on nc sets of matrices, which are defined as $d$-tuples of matrices of all sizes that respect direct sums and similarity; that is, they are subsets of  \[M^d(k) := \bigsqcup_{n\in N} M^d_n( k)\] where $k$ is often taken to be $\mathbb C$. When a field has been fixed, we write simply $M^d$. Thus a full nc set is a subset $U \subset M^d$ where if $X, Y = (X_1, \dots, X_d), (Y_1, \dots, Y_d) \in U$ implies \[X\oplus Y = (X_1 \oplus Y_1, \dots, X_d\oplus Y_d) = (\m{X_1 &\\&Y_1}, \dots, \m{X_d &\\&Y_d})\in U\] (where $\m{X &\\&Y}$ is a block diagonal matrix with 0s on the off diagonals) as well as \[S\ii XS = (S\ii X_1S, \dots, S\ii X_dS) \in U\] for every invertible $S$ of the appropriate size. We say that an nc set is open if it is open in the standard norm topology at every matrix size $n$. 

We now define a free function $f: U\subset M^d \rightarrow M^{\tilde d}$ as a function which respects intertwining, which is to say a function satisfying \[X\Gamma = \Gamma Y \implies f(X)\Gamma = \Gamma f(Y)\]
where $\Gamma$ is an appropriately sized not necessarily square matrix. These are entrywise equations as above, interpreted as \[\begin{split}
    (X_1\Gamma, \dots, X_d\Gamma) &= (\Gamma Y_1, \dots, \Gamma Y_d) \\ &\Downarrow  \\(f_1(X)\Gamma, \dots, f_{\tilde{d}}(X)\Gamma) &= (\Gamma f_1(Y), \dots, \Gamma f_{\tilde{d}}(Y)).
\end{split}\]
This relationship is is encoded in the commuting diagram 

\begin{center} \begin{tikzpicture}[node distance=2cm, auto]
  \node (X) {$X$};
  \node (fx) [right of=X] {$f(X)$};
   \node (Y) [below of=X] {$Y$};
  \node (fy) [right of=Y] {$f(Y)$};
  \draw[->] (X) to node [swap] {$\Gamma$} (Y);
  \draw[->] (X) to node {$f$} (fx);
  \draw[->] (Y) to node {$f$} (fy);
  \draw[->] (fx) to node [swap] {$\Gamma$} (fy);
\end{tikzpicture}\end{center}

Key to our generalization, this is the diagram that encodes a natural transformation in category theory. 

A foundational proposition of free analysis is the following. \begin{proposition}
    A function $f: U\in M^d\rightarrow M^{\tilde d}$ is free if and only if it respects direct sums and similarity. 
\end{proposition}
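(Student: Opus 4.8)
\section*{Proof proposal}

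The plan is to prove the two implications separately, reading ``free'' as the intertwining condition $X\Gamma = \Gamma Y \implies f(X)\Gamma = \Gamma f(Y)$ and ``respects direct sums and similarity'' as the pair of identities $f(X\oplus Y)=f(X)\oplus f(Y)$ and $f(S\ii X S)=S\ii f(X)S$.

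For the forward (easy) direction, assume $f$ respects intertwining. Similarity is immediate: take $\Gamma=S$ invertible and $Y=S\ii X S$, so that $XS=SY$ holds automatically, and the intertwining conclusion $f(X)S=Sf(S\ii X S)$ rearranges to $f(S\ii X S)=S\ii f(X)S$. For direct sums I would use the block inclusions $\iota_1=\m{I\\0}$ and $\iota_2=\m{0\\I}$. A one-line check gives $(X\oplus Y)\iota_1=\iota_1 X$ and $(X\oplus Y)\iota_2=\iota_2 Y$, so intertwining forces the two block-columns of $f(X\oplus Y)$ to be $\m{f(X)\\0}$ and $\m{0\\f(Y)}$ respectively, whence $f(X\oplus Y)=f(X)\oplus f(Y)$.

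The reverse direction is the crux. Assume $f$ respects direct sums and similarity, and suppose $X\Gamma=\Gamma Y$ with $X$ of size $n$, $Y$ of size $m$, and $\Gamma$ an $n\times m$ (not necessarily square) matrix. The key construction is the unipotent similarity \[S=\M{I_n & \Gamma\\ 0 & I_m},\qquad S\ii=\M{I_n & -\Gamma\\ 0 & I_m},\] which is genuinely square and invertible of size $n+m$. A direct computation gives \[S\ii(X\oplus Y)S=\M{X & X\Gamma-\Gamma Y\\ 0 & Y},\] and under the hypothesis $X\Gamma=\Gamma Y$ the off-diagonal block vanishes, so $S\ii(X\oplus Y)S=X\oplus Y$ \emph{exactly}; that is, conjugation by $S$ fixes $X\oplus Y$. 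Now apply $f$: since $U$ is an nc set, both $X\oplus Y$ and its conjugate lie in $U$, so the direct-sum property gives $f(X\oplus Y)=f(X)\oplus f(Y)$ while the similarity property gives $f(S\ii(X\oplus Y)S)=S\ii f(X\oplus Y)S$. Because the argument on the left is literally $X\oplus Y$, these combine to $f(X)\oplus f(Y)=S\ii\bigl(f(X)\oplus f(Y)\bigr)S$, i.e. $S$ fixes $f(X)\oplus f(Y)$ under conjugation as well. Expanding the resulting commutation $\bigl(f(X)\oplus f(Y)\bigr)S=S\bigl(f(X)\oplus f(Y)\bigr)$ and reading off the upper-right block yields $f(X)\Gamma=\Gamma f(Y)$, as required.

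The main obstacle is this reverse direction, and specifically the insight to encode the intertwining relation as the single fixed-point identity $S\ii(X\oplus Y)S=X\oplus Y$: it is precisely this identity, valid exactly when $X\Gamma=\Gamma Y$, that transmutes the two weaker closure hypotheses into the full intertwining of the images. The only bookkeeping to watch is that $\Gamma$ need not be square, so $X$ and $Y$ may live at different matrix sizes; this is harmless, since the nc set contains all sizes and $S$ remains square and invertible at size $n+m$.
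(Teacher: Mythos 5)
Your proof is correct and follows essentially the same route as the paper: the paper proves this statement (in its generalized, categorical form) using exactly your key construction, namely the unipotent similarity $\bigl(\begin{smallmatrix}I & \Gamma\\ 0 & I\end{smallmatrix}\bigr)$, which is an automorphism fixing $X\oplus Y$ precisely when $X\Gamma=\Gamma Y$, to get the hard direction, and the block inclusion/projection morphisms $\bigl(\begin{smallmatrix}I\\0\end{smallmatrix}\bigr)$, $\bigl(\begin{smallmatrix}0 & I\end{smallmatrix}\bigr)$ for the easy direction. No gaps; the bookkeeping about $\Gamma$ being non-square is handled the same way in both arguments.
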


That is, $f(X\oplus Y) = f(X) \oplus f(Y)$ and $f(S\ii XS) = S\ii f(X)S$ whenever $S$ is invertible.

Using this characterization of free functions, we can easily prove a much more profound result; every continuous free function is automatically analytic [KVV]. \begin{proposition}
    For an open set $U\subset M^d$, if a free function $f: U \rightarrow M^{\tilde d}$ is continuous then it is analytic and for small enough $H$\[f\M{X & H\\&X} = \M{f(X) & Df(X)[H] \\ &f(X)}\]
\end{proposition}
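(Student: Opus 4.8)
The plan is to lean entirely on the preceding characterization: $f$ respects direct sums and similarity, so every intertwining relation $X\Gamma = \Gamma Y$ transfers to $f(X)\Gamma = \Gamma f(Y)$ by the natural-transformation property, including for non-invertible (even non-square) $\Gamma$ once the relation is read off from similarity and direct sums. I work at a fixed matrix level $n$, where $f$ restricts to a continuous map between the finite-dimensional spaces $U_n \subset M_n^d$ and $M_n^{\tilde d}$, and I abbreviate $Z = \M{X & H \\ 0 & X}$, which lies in $U$ for $H$ small since $U$ is open.

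First I would pin down the shape of $f(Z)$. Testing against the injection $\M{I \\ 0}$ gives $Z\M{I\\0} = \M{I\\0}X$, so $f(Z)\M{I\\0}=\M{I\\0}f(X)$, forcing the first block column of $f(Z)$ to be $\M{f(X)\\0}$; testing against the surjection $\M{0 & I}$ forces the second block row to be $\M{0 & f(X)}$. Hence $f(Z)=\M{f(X) & L(H)\\0 & f(X)}$ for a single block $L(H)$. Homogeneity $L(tH)=tL(H)$ is immediate from the similarity $\M{X & tH\\0&X} = \M{I&0\\0&t\ii I}Z\M{I&0\\0&tI}$ together with respect-for-similarity (and $L(0)=0$ from respect-for-direct-sums). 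For additivity I would evaluate $f$ on $\M{X & H_1 & H_2\\0&X&0\\0&0&X}$: the same injection/surjection trick on its corners identifies its $(1,2)$ and $(1,3)$ blocks as $L(H_1)$ and $L(H_2)$ and its $(2,3)$ block as $L(0)=0$, while the intertwiner $\M{I&0\\0&I\\0&I}$ relates this matrix to $\M{X & H_1+H_2\\0&X}$ and so equates $L(H_1)+L(H_2)$ with $L(H_1+H_2)$. Thus $L$ is linear.

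The crux — and the step I expect to be the main obstacle — is identifying $L$ with an honest derivative, and this is exactly where continuity enters. The key observation is that $\M{X & \tilde X - X\\0&\tilde X} = \M{I&I\\0&I}(X\oplus\tilde X)\M{I&I\\0&I}\ii$, so these matrices are similar. Applying $f$ to both sides and reading off the off-diagonal block yields the fundamental-theorem-of-calculus identity $L_{X,\tilde X}(\tilde X - X)=f(\tilde X)-f(X)$, where $L_{X,\tilde X}$ denotes the corresponding two-point off-diagonal block of $f$. Setting $\tilde X = X+tH$ and dividing by $t$ gives $\tfrac1t\bigl(f(X+tH)-f(X)\bigr) = L_{X,X+tH}(H)$; since $L_{X,Y}$ is itself a continuous function of $Y$ (being a block of the continuous map $f$), letting $t\to 0$ produces $Df(X)[H]=L_{X,X}(H)=L(H)$. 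This establishes the claimed formula and, because $t$ may be taken complex, shows that every complex directional derivative of $f$ exists on $U_n$.

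Finally, analyticity follows: $f$ has complex directional derivatives everywhere on the open set $U_n$ and is locally bounded (being continuous on a finite-dimensional domain), so a standard Hartogs-type holomorphy argument makes it holomorphic, hence analytic. Alternatively one iterates the construction, evaluating $f$ on the $k\times k$ block-bidiagonal matrix with $X$ on the diagonal and $H$ on the superdiagonal to extract all higher difference-differential operators as the upper blocks; continuity then bounds these coefficients and forces the resulting series to converge on a neighborhood, as in [KVV]. The only genuinely delicate point throughout is the limit $t\to 0$ in the previous paragraph, for which continuity is indispensable — without it $L$ is still linear but need not be a derivative at all.
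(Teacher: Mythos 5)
Your proposal is correct and follows essentially the same route as the paper: your key identity $L_{X,\tilde X}(\tilde X - X) = f(\tilde X) - f(X)$, obtained by conjugating $X \oplus \tilde X$ with $\m{I & I \\ 0 & I}$, is precisely the paper's lemma (conjugation by $\m{1 & \Gamma \\ & 1}$, applied with $\Gamma$ a scalar multiple of the identity), and both arguments then realize the difference quotient $\frac{1}{t}\bigl(f(X+tH)-f(X)\bigr)$ as the off-diagonal block of $f$ on an upper-triangular point and use continuity of $f$ to pass to the limit $t \to 0$. The additional material you supply (pinning down the block shape via injections/surjections and proving linearity of $L$ with the $3\times 3$ construction) is correct but not needed for the paper's terser version of the same argument.
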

where $Df(X)[H]$ is the standard Fr\'echet derivative evaluated in direction $H$. 

Among many other results, the following theorem has been shown [IFT].

\begin{theorem}[Free Inverse Function Theorem]
    Let $U \in M^d$ be an open set. For a free function $f: U \rightarrow M^d$, the following are equivalent.
    \begin{itemize}
        \item $Df(X)$ is non-singular for all $X\in U$
        \item $f$ is injective. 
        \item $f^{-1}: f(U) \rightarrow U$ exists and is a free function.  
    \end{itemize}
\end{theorem}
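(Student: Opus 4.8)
The plan is to prove the three conditions equivalent by the cycle (first)$\Rightarrow$(second)$\Rightarrow$(third)$\Rightarrow$(first), where the only substantial implication is that a nonsingular derivative forces \emph{global} injectivity. The engine throughout is the upper-triangular evaluation formula from the automatic-differentiation proposition, which I would first extend from a repeated diagonal entry to distinct diagonal entries: for $X,Y$ of the same size and small $H$,
\[ f\M{X & H \\ 0 & Y} = \M{f(X) & \Delta f(X,Y)[H] \\ 0 & f(Y)}, \]
where $\Delta f(X,Y)[\cdot]$ is linear (the nc difference quotient) and $\Delta f(X,X)=Df(X)$. This is a verbatim lift of that proposition's proof, now using that $f$ respects intertwining rather than only direct sums. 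The relevant block matrices lie in the domain for free: since $S=\m{I & I \\ 0 & I}$ satisfies $S\ii(X\oplus Y)S = \m{X & X-Y \\ 0 & Y}$, and $U$ is closed under similarity, $\m{X & X-Y\\0&Y}\in U$ whenever $X,Y\in U$.

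The implication (first)$\Rightarrow$(second) is the crux. First I would record the \textbf{difference-quotient identity}: applying the intertwining relation $X\,\m{I & I} = \m{I & I}\,\m{X & X-Y\\0&Y}$ to $f$ and comparing off-diagonal blocks yields
\[ \Delta f(X,Y)[X-Y] = f(X)-f(Y). \]
Next I would show that $\Delta f(X,Y)$ sits inside the derivative at the direct sum: differentiating $f$ at $X\oplus Y$ in the four block directions and using $f\bigl((X{+}tH)\oplus Y\bigr)=f(X{+}tH)\oplus f(Y)$ shows that $Df(X\oplus Y)$ decomposes block-diagonally as $Df(X)$, $\Delta f(X,Y)$, $\Delta f(Y,X)$, $Df(Y)$ acting on the respective blocks, hence is nonsingular \emph{iff} all four are. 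Thus a nonsingular derivative on all of $U$ forces every $\Delta f(X,Y)$ to be nonsingular; combined with the identity above, $f(X)=f(Y)$ gives $\Delta f(X,Y)[X-Y]=0$ and therefore $X=Y$. This is exactly the step where the free structure promotes local injectivity to global injectivity, and I expect it to be the main obstacle, specifically pinning down the block decomposition of $Df(X\oplus Y)$ and the attendant domain bookkeeping.

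For (second)$\Rightarrow$(third): injectivity makes $f^{-1}$ well defined on $f(U)$, which is itself an nc set because $f(X)\oplus f(Y)=f(X\oplus Y)$ and $S\ii f(X)S = f(S\ii X S)$. The same two identities, read backwards, show $f^{-1}$ respects direct sums and similarity, so $f^{-1}$ is free by the characterization proposition. Finally (third)$\Rightarrow$(first) is the automatic-differentiation principle once more: in the standing continuous (hence analytic) setting both $f$ and its free inverse are analytic, and differentiating $f^{-1}\circ f=\mathrm{id}$ by the chain rule gives $Df^{-1}(f(X))\,Df(X)=\mathrm{id}$, so $Df(X)$ is nonsingular for every $X\in U$. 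This closes the cycle.
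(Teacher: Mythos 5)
Your proposal is correct in substance, and on the crux implication --- nonsingular derivative forces global injectivity --- it is the same mechanism as the paper's. (Strictly, the paper states this classical theorem as background from [IFT] and proves instead its generalized inverse function theorem, whose proof it declares to be ``basically the same as in [IFT]''; that proof is the right comparison target.) The paper evaluates $f$ on the $4\times 4$ block matrix with diagonal $X\oplus Y\oplus X\oplus Y$ and corner entry $x-y$, observes it is a similarity conjugate of the block diagonal (so it lies in the domain and $f$ of it is computable), and reads the result against $f\m{Z & H\\ & Z} = \m{f(Z) & Df(Z)[H]\\ & f(Z)}$ with $Z = X\oplus Y$: when $f(X)=f(Y)$ the derivative at $X\oplus Y$ kills the corner direction, so $x-y=0$. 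Your two-step route --- the identity $\Delta f(X,Y)[X-Y]=f(X)-f(Y)$ via the intertwiner $\m{I & I}$, plus the embedding of $\Delta f(X,Y)$ as a block of $Df(X\oplus Y)$ --- is exactly that computation with the difference quotient named explicitly, and your domain bookkeeping (closure under $\oplus$ and similarity, plus openness for small $H$) matches. Your (2)$\Rightarrow$(3) is likewise the paper's: $f(U)$ is an nc set, and the inverse respects direct sums and similarity, hence is free.

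The one place you genuinely diverge is closing the cycle by (3)$\Rightarrow$(1) via the chain rule, and that leg has a real soft spot: differentiating $f^{-1}\circ f=\mathrm{id}$ requires $f^{-1}$ to be differentiable at $f(X)$, which you justify by asserting the free inverse is analytic. Freeness plus continuity does give analyticity, but continuity of $f^{-1}$ (and openness of $f(U)$) is not free --- in the classical setting it needs invariance of domain, or Osgood's theorem on injective equidimensional holomorphic maps, neither of which you invoke. The paper sidesteps this entirely: it takes the trivial implication that an invertible map is injective, and separately proves the two-line contrapositive of (2)$\Rightarrow$(1): if $Df(X)[H]=0$ with $H\neq 0$, then $f\m{X & H\\ & X} = \m{f(X) & \\ & f(X)} = f\m{X & \\ & X}$, contradicting injectivity. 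Since $\Delta f(X,X)=Df(X)$ in your notation, you already have every tool needed for that argument; I would swap your chain-rule leg for it, which makes the proof self-contained and drops the unproved analyticity of the inverse.
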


Thanks to [Groth] and the global nature of the above result, we gain the following theorem. 
\begin{theorem}[Free Jacobian Theorem]
    Let $f$ be a free nc polynomial evaluated on matrix inputs. The following are equivalent.
    \begin{itemize}
        \item $Df(X)$ is non-singular for all $X\in U$
        \item $f$ is injective.
        \item $f$ is bijective.
        \item $f\ii$ exists and is also a free nc polynomial. 
    \end{itemize}
\end{theorem}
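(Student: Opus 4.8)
The plan is to treat the Free Inverse Function Theorem as the backbone and bolt on two genuinely new pieces: the jump from injectivity to bijectivity, and the upgrade of the abstract free inverse to an honest free nc polynomial. Label the four conditions (i)--(iv) in the order stated. First I would note that because $f$ is a free nc polynomial it is defined on all of $M^d$, so we may take $U = M^d$; in particular $f$ is a free function to which the previous theorem applies verbatim, immediately giving
\[ Df(X)\text{ nonsingular for all }X \iff f\text{ injective} \iff f\ii\text{ exists and is free}. \]
Thus (i) and (ii) are already equivalent, and it remains to close the gap between injectivity and bijectivity and to promote the free inverse to a polynomial.

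For the bijectivity equivalence I would argue level by level. At matrix size $n$ the component $f_n$ is a polynomial self-map of $M_n^d(\mathbb{C}) \cong \mathbb{C}^{dn^2}$, an affine space over the algebraically closed field $\mathbb{C}$. If $f$ is injective as a free function then each $f_n$ is injective (a coincidence $f_n(X)=f_n(Y)$ with $X\neq Y$ would violate injectivity of $f$), and the Ax--Grothendieck theorem [Groth]---an injective polynomial endomorphism of affine space over $\mathbb{C}$ is surjective---shows each $f_n$ is bijective. Since free functions preserve matrix size, bijectivity at every level is exactly bijectivity of $f$, so (ii) implies (iii); the converse is immediate. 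Note that this is the step that uses the domain being all of $M^d$, as Ax--Grothendieck needs a self-map of the full affine space.

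The substantive remaining step is (iv): knowing $g := f\ii$ is free, show it is a free nc polynomial. The strong form of Ax--Grothendieck guarantees that each $g_n$ is itself polynomial, so $g$ is polynomial at every level; what must be shown is that these level-wise polynomials assemble into a single free nc polynomial, i.e. that their degrees are bounded \emph{uniformly} in $n$. This uniform bound is the crux, and it is precisely where the free structure does work the commutative theory cannot: the classical degree estimate for the inverse of a polynomial automorphism of $\mathbb{C}^N$ grows with $N$, hence with $n$, and so is useless on its own. I would extract the uniform bound from the nc-jet machinery recorded in the block-upper-triangular evaluation proposition: enlarging that construction to larger Jordan-type blocks produces the higher Taylor--Taylor coefficients of a free function, exactly as the powers of the nilpotent $N_n$ read off successive coefficients of a univariate polynomial in Section 2. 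Because $f$ has a fixed finite degree $\delta$, all of its Taylor coefficients past order $\delta$ vanish; applying the nc chain rule to the identity $g \circ f = \mathrm{id}$ in this jet calculus expresses the high-order coefficients of $g$ through those of $f$ and forces them to vanish past an order depending only on $\delta$ and $d$, independent of $n$. A free function whose Taylor--Taylor expansion terminates at a fixed order is a free nc polynomial, which is (iv). Finally (iv) trivially yields a free inverse, returning us to the first part and so to (i) via the Inverse Function Theorem, closing the cycle.

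The hard part will be this uniform degree bound. The danger is subtle but real: at a fixed size $n$ the algebraic relations special to $n \times n$ matrices (Cayley--Hamilton and its consequences) can collapse an infinite nc series into a genuine polynomial in the entries, so ``polynomial at each level'' does \emph{not} by itself give finitely many nc coefficients, and a priori the per-level degrees could grow without bound. Ruling this out is the entire content beyond IFT and Ax--Grothendieck, and the nilpotent/jet probes above are what I expect to make it work, since nilpotents of large index faithfully detect high-order coefficients without being subject to the low-degree collapse.
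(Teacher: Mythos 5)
Your skeleton matches the intended derivation up to a point, but note first that the paper does not prove this theorem at all: it is quoted in the preliminary review as an immediate consequence of the Free Inverse Function Theorem [IFT] together with Augat's free Grothendieck theorem [Groth], whose statement \emph{is} precisely the hard implication here --- an injective free nc polynomial has a free nc polynomial inverse. Your use of the IFT to tie together derivative-nonsingularity, injectivity, and existence of a free inverse is exactly the ``global nature of the above result'' the paper invokes, and your level-by-level use of the classical Ax--Grothendieck theorem (note: that is the classical commutative theorem, not what the paper's tag [Groth] denotes) correctly yields (ii) $\Rightarrow$ (iii) and the fact that each level $g_n$ of $g := f^{-1}$ is a commutative polynomial in the matrix entries.

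The genuine gap is your step (iv). Passing from ``each $g_n$ is polynomial in the entries'' to ``$g$ is a single free nc polynomial of degree bounded uniformly in $n$'' is not something you can extract from the jet calculus plus the chain rule; it is the entire content of [Groth]. Concretely, the mechanism you propose fails: normalize $f(0)=0$, $g(0)=0$, so $Df(0)$ is invertible, and expand $g \circ f = \mathrm{id}$ in Taylor--Taylor homogeneous components. This gives a recursion that \emph{determines} each homogeneous term $g^{(m)}$ from $f$ and the terms $g^{(k)}$, $k<m$, by inverting $Df(0)$ --- but nothing in the recursion ever forces $g^{(m)}$ to vanish for large $m$. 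The identical recursion applied to the non-injective univariate map $f(x)=x-x^2$ produces an infinite formal series, and applied to commutative polynomial automorphisms of $\mathbb{C}^N$ it produces inverses whose degree genuinely grows with $N$; so termination of the expansion of $g$ must be wrung from free injectivity by some mechanism beyond ``chain rule on jets.'' That mechanism is the substance of Augat's proof, a long argument in its own right, and is not recoverable from the propositions quoted in this paper. As written, your proposal re-derives the easy implications and then asserts the hard one.
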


There are many other notable results, such as in [Prop] and [Loci]; among the most important is the following analogue of Hilbert's 17th problem, due to Helton [SoS]. 
\begin{theorem}
    A noncommutative polynomial is matrix positive if and only if it is a sum of squares. 
\end{theorem}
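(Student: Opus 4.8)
The plan is to prove the two implications separately, with the forward direction being routine and the converse carrying all the substance. For the easy direction, suppose the symmetric noncommutative polynomial $p$ is a sum of squares, say $p = \sum_j q_j^* q_j$. Evaluation at a tuple $X = (X_1, \dots, X_d)$ of matrices is a unital $*$-algebra homomorphism, so $p(X) = \sum_j q_j(X)^* q_j(X)$, and each summand $q_j(X)^* q_j(X)$ is positive semidefinite. A sum of positive semidefinite matrices is positive semidefinite, so $p(X) \succeq 0$ at every size; thus $p$ is matrix positive.

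For the converse I would first reduce to a finite-dimensional linear-algebraic statement via the Gram (or ``middle matrix'') representation. A matrix-positive polynomial must have even degree $2k$; let $W$ be the column vector whose entries are all words in the variables of length at most $k$. One can write $p = W^* G W$ for a scalar hermitian matrix $G$, and the admissible $G$ form an affine subspace (the nonuniqueness coming from relations among the entries of $WW^*$). The key observation is that $p$ is a sum of squares if and only if this affine subspace meets the positive semidefinite cone: a factorization $G = B^* B$ yields $p = (BW)^*(BW)$, exhibiting the entries of $BW$ as the square summands, and the converse direction is immediate.

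The heart of the argument is then a separation-and-representation dichotomy. Suppose for contradiction that $p$ is matrix positive but not a sum of squares. The sums of squares of degree at most $2k$ form a closed convex cone in the finite-dimensional space of symmetric polynomials of degree at most $2k$, so by Hahn--Banach separation there is a linear functional $L$ that is nonnegative on every square $q^* q$ yet satisfies $L(p) < 0$. The condition $L(q^* q) \ge 0$ makes $\langle q, r\rangle := L(r^* q)$ a positive semidefinite sesquilinear form on the space of polynomials of degree at most $k$; quotienting by its kernel produces a finite-dimensional Hilbert space $\mathcal H$ carrying self-adjoint multiplication operators $X_1, \dots, X_d$ and a distinguished vector $v$, the image of the constant $1$, for which $L(q) = \langle q(X) v, v\rangle$. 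Evaluating at $p$ gives $\langle p(X) v, v\rangle = L(p) < 0$, so $p(X)$ is not positive semidefinite, contradicting matrix positivity. Hence $p$ is a sum of squares.

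I expect the main obstacle to be the GNS-style construction of the operators $X_i$ with the correct self-adjointness and, above all, the degree bookkeeping: multiplication by a variable raises word-length, so the multiplication operators are only honestly defined modulo the truncation, and one must verify both that they descend to the quotient as genuine hermitian matrices and that the identity $L(q) = \langle q(X)v, v\rangle$ persists up to the degree needed to apply it to $p$. A secondary technical point, implicit above, is the a priori degree bound --- that a matrix-positive polynomial of degree $2k$, if representable at all, admits a representation using squares of polynomials of degree at most $k$ --- which is what confines the whole argument to a fixed finite-dimensional space and lets the separation theorem apply.
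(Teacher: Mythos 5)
The paper does not prove this statement at all: it is quoted as background and attributed to Helton [SoS], so there is no internal proof to compare yours against. Judged on its own, your proposal is a faithful reconstruction of the standard argument from that reference: evaluation gives the easy direction, and the converse runs through the Gram representation $p = W^*GW$, closedness of the sum-of-squares cone in the finite-dimensional space of polynomials of degree at most $2k$, Hahn--Banach separation, and a truncated GNS construction producing hermitian matrices and a vector $v$ with $\langle p(X)v,v\rangle = L(p) < 0$. The obstacles you flag are the genuine ones, and they close along the lines you anticipate: one separates in degree $2k+2$ so that the functional is nonnegative on squares of degree-$(k+1)$ polynomials, whence the compressed operators defined by $\langle X_i q, r\rangle := L(r^* x_i q)$ descend to the quotient (Cauchy--Schwarz for the semi-inner product kills the kernel) and are self-adjoint because $r^* x_i q = (x_i r)^* q$; and the identity $\langle p(X)v,v\rangle = L(p)$ is verified not by applying words of length up to $2k$ to $v$ --- which would escape the truncation --- but by expanding $p$ through its Gram representation, so that only words of length at most $k$ ever act on $v$. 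Two points you assert without flagging do need their own (standard) arguments: closedness of the cone is not automatic, since it is the image of the positive semidefinite cone under a linear map and such images need not be closed (one uses the trace bound on positive semidefinite Gram matrices to extract convergent subsequences); and the a priori degree bound follows by observing that the top-degree parts of any representation by higher-degree squares would survive as a nonzero sum of squares of degree exceeding $2k$.
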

We say that $p$ is \textbf{matrix positive} if $p(X)$ is a positive semidefinite matrix (that is, hermitian with positive eigenvalues) for every tuple of hermitian matrices $X$. A polynomial in matrices is said to be a sum of squares, or sum of hermitian squares, if $p(X, X^*) = \sum_ik_ip_i(X, X^*)^*p_i(X,X^*)$ where it takes as argument a tuple $X = (X_1, \dots, X_n)$ and the tuple $X^* = (X_1^*, \dots, X_n^*)$ where $X_i^*$ is the conjugate transpose of $X_i$. 

Many refinements to this theorem have been made, such as [SoHS], [H17]. 

\subsection{Category Theory}
We include here a brief introduction covering the basic concepts required in this discussion.

\begin{definition}[Category]
    A \textbf{category} $C$ consists of the following 
    \begin{itemize}
    \item A collection of \textbf{objects}, denoted $ob(C)$
    \item For every $u, v\in ob(C)$, a collection of \textbf{morphisms} from $u$ to $v$, denoted $mor(u, v)$. 
    \begin{itemize}
        \item If $x \in mor(u, v)$ and $y\in mor(v, w)$ then there is a morphism $yx := y\circ x \in mor(u, w)$; this composition must be associative. 
        \item There exists an element $1 \in mor(u) := mor(u,u)$ called the \textbf{identity} so that $1x = x$ and $x1 =x$ for any $x$ with appropriate domain and codomain. 
    \end{itemize}

    \end{itemize} 
    
\end{definition}

Morphisms can be usefully thought of as a generalization of functions; indeed, in the category of sets the morphisms are exactly the functions, and in the category of vector spaces they are exactly the linear transformations. As such, the language of category theory can be usefully leveraged to make function-theoretic arguments. 

A category is called \textbf{locally small} if $mor(v,w)$ is a set for any objects $v,w$. It is called \textbf{small} if its collection of objects also forms a set. 

For the majority of this paper, we will be assuming that the categories are at least locally small, for the purposes of using such notation as $\in$ and $\subset$, though the proofs are not restrictive. 

Examples of categories include: 
\begin{itemize}
    \item Any group, with one vacuous object and its elements as morphisms;
    \item The category of vector spaces over a fixed field $k$, with vector spaces as objects and linear transformations as morphisms;
    \item The category of $R$-modules, with $R-$module homomorphisms as its morphisms; 
    \item the subsets of some set $A$, with a single morphism $a \rightarrow b$ if $a \subset b$;
    \item any partial ordering, with a morphism $a\rightarrow b$ if $a\leq b$;
    \item the category of topological spaces, with continuous functions as morphisms.
\end{itemize}

We define below a type of category that will be crucial throughout this paper. 

\begin{definition}[Free Category]
    A \textbf{free category} (or path category) $Q$ is a category induced by a quiver (i.e. multidigraph) $Q'$ where $ob(Q)$ consists of the vertices of $Q'$ and $mor(u, v)$ consists of all paths in $Q'$ from $u$ to $v$. The identity in $mor(u)$ is taken to be the empty path from $u$ to itself.
    
    In this paper we also refer to $Arcs(Q)$, which consists of those morphisms of $Q$ that are paths of length 1. Which is to say, $Arcs(Q)$ consists of the arcs of $Q'$.      
\end{definition}
    Note that the convention for writing morphisms in a free category follows that of function composition, and is hence opposite that for writing paths in a multidigraph. 
    Thus for the free category generated by the graph below the morphism from $u$ to $w$ is called $yx$.
    \begin{center}
    \begin{tikzpicture}[node distance=2cm, auto]
        \node (u) {$u$};
        \node (v) [right of = u] {$v$};
        \node (w) [right of = v] {$w$};
        \draw[->] (u) to node {$x$}(v);
        \draw[->] (v) to node {$y$} (w);
    \end{tikzpicture}
\end{center} 

We will soon define the notion of an additive category, but first we must deal with a few important notions. 
\subsubsection{Functoriality}
A homomorphism of categories is called a functor, as defined below. 

\begin{definition}
    Let $C, D$ be categories. A \textbf{functor} $f: C\rightarrow D$ is a mapping between categories satisfying the following. 
    
    For all $u, v, w \in ob(C)$, and for all $x\in mor(u, v)$ and $y\in mor(v, w)$:  
    \begin{itemize}
        \item There is an object $f(u) \in ob(D)$.
        \item There is a morphism $f(x) \in mor(f(u), f(v))$ for each $x \in mor(u, v)$.
        \item For the identity $1_{u} \in mor(u)$, the image $f(1) = 1_{f(u)} \in mor(f(u))$ is the identity. 
        \item The functor respects composition, that is $f(yx) = f(y)f(x)$.
    \end{itemize}
\end{definition}

This definition works for any two categories, though sometimes the only valid functor maps all objects to a single object and all morphisms to that object's identity. 

Commonly we will deal with functors $F: Q \rightarrow C$ from a free category to another category. In this case each arc $x \in Arcs(Q)$ is mapped to any morphism of $C$ with the correct domain and codomain, and the rest of the morphisms are defined by products $F(y)F(x)$. 

For example, the diagram below describes a functor from a free category to that of finite dimensional vector spaces. \begin{center}
    \begin{tikzpicture}[node distance=2cm, auto]
        \node (u) {$u$};
        \node (v) [right of = u] {$v$};
        \node (w) [right of = v] {$w$};
        \draw[->] (u) to node {$x$}(v);
        \draw[->] (v) to node {$y$} (w);
    \end{tikzpicture}
\end{center} 
\[\downarrow{F}\]
\begin{center}
    \begin{tikzpicture}[node distance=2cm, auto]
        \node (u) {$\mathbb C^1$};
        \node (v) [right of = u] {$\mathbb C^2$};
        \node (w) [right of = v] {$\mathbb C^3$};
        \draw[->] (u) to node {$\m{\alpha \\ \beta}$}(v);
        \draw[->] (v) to node {$\m{a & b \\ c&d \\ g & h}$} (w);
    \end{tikzpicture}
\end{center} 

In this case, for arbitrary choice of such matrices,  \[F(yx) = \M{a & b \\ c&d \\ g & h}\M{\alpha \\ \beta} = \M{a\alpha + b\beta \\ c\alpha + d\beta \\ g\alpha +h\beta}.\]

Now that the idea of a functor has been presented, we will also define maps between functors.
 \begin{definition}
     A \textbf{natural transformation} $\eta$ between functors $F,G : C\rightarrow D$ consists of a morphism $ \eta_u \in mor_D(F(u), G(u))$ for each object $u\in ob(C)$ so that the following diagram commutes for every $u, v\in ob(C)$ and $x\in mor(u, v)$. 
     \begin{center} \begin{tikzpicture}[node distance=2cm, auto]
  \node (X) {$F(u)$};
  \node (fx) [right of=X] {$F(v)$};
   \node (Y) [below of=X] {$G(u)$};
  \node (fy) [right of=Y] {$G(v)$};
  \draw[->] (X) to node [swap] {$\eta_u$} (Y);
  \draw[->] (X) to node {$F(x)$} (fx);
  \draw[->] (Y) to node {$G(x)$} (fy);
  \draw[->] (fx) to node [swap] {$\eta_v$} (fy);
\end{tikzpicture}\end{center}

Which is just to say that the equation \[G(x) \eta_u = \eta_v F(x)\]
always holds.  \end{definition}

The following is a simple example of transformations between functors of the aforementioned path diagram. 
\begin{center}
    \begin{tikzpicture}[node distance=2cm, auto]
        \node (u) {$\mathbb C^1$};
        \node (v) [right of = u] {$\mathbb C^2$};
        \node (w) [right of = v] {$\mathbb C^3$};
        \node(u') [below of = u] {$\mathbb C^2$};
        \node(v') [below of = v] {$\mathbb C^1$};
        \node(w') [below of = w] {$\mathbb C^2$};

        \draw[->] (u) to node {$\m{1 & 1}$} (u');
        \draw[->] (v) to node {$\m{2 \\1}$} (v');
        \draw[->] (w) to node {$\m{1 & 0\\0&1\\0&0}$} (w');

        \draw[->] (u') to node {$\m{2\alpha \\ \beta}$}(v');
        \draw[->] (v') to node {$\m{a& b}$} (w');
                
        \draw[->] (u) to node {$\m{\alpha & \beta}$}(v);
        \draw[->] (v) to node {$\m{2a & 2b & c \\ a & b & d}$} (w);
    \end{tikzpicture}
\end{center} 

Every functor has an identity transformation to itself, defined by choosing $\eta_x$ to be $id_{Fx}$. Similarly, if we have \begin{center}
    \begin{tikzpicture}[node distance=2cm, auto]
        \node (u) {$F$};
        \node (v) [right of = u] {$G$};
        \node (w) [right of = v] {$H$};
        \draw[->] (u) to node {$\eta$}(v);
        \draw[->] (v) to node {$\xi$} (w);
    \end{tikzpicture}
\end{center}  
where $F, G, H$ are functors and $\eta, \xi$ are natural transformations, then the composition of the two defines a natural transformation $\xi\eta: F\rightarrow H$. 

Another example are group representations. A representation of a group is functor, and a similarity transformation is a natural transformation. 

Thus natural transformations have all of the properties of a morphism in a category, and so we can define the following. 
\begin{definition}
    The \textbf{functor category} $C^D$ has as its objects all of the functors $F: D\rightarrow C$, and as its morphisms the natural transformations between such functors. 
\end{definition}

Recall that in some sense a functor $F \in C^D$ is a collection of objects and morphisms in $C$ that has the structure of the category $D$; a natural transformation $\eta: F\rightarrow G$ is some morphisms from the objects of $F$ to the objects of $G$ that makes corresponding morphisms in $F$ and $G$ act similarly to each other. 

\subsubsection{Additive Categories}
We also need to appeal to properties of additive categories; these are the categories that act like vector spaces, in some sense, and they play that role in this theory. 

\begin{definition}
    An \textbf{additive category} $C$ is a category that satisfies the following conditions.
    \begin{itemize}
        \item $mor(u, v)$ is an abelian group. It is often called $Hom(u,v)$ in this context. 
        \item Composition of morphisms is bilinear; that is $(x+y)\circ z = (x\circ z)+(y\circ z)$, etc. 
        \item All finite products (equivalently, coproducts) exist. 
        \item It contains a zero object. 
    \end{itemize}
\end{definition}

Several of these details are quite technical; for our purposes, an additive category is effectively indistinguishable from a category of $R$-modules. Thus, we will stick to such examples for the time being. For further information about the categorial details, see [C].

Examples of additive categories include:
\begin{itemize}
    \item The category of abelian groups, the homomorphisms as its morphisms;
    \item the category of Banach spaces with linear transformations;
    \item The category of finite dimensional vector spaces, with linear transformations as morphisms;
    \item the category of Hilbert spaces with linear transformations;
    \item The category of vector spaces over $k$ with linear transformations;
    \item the category of $R$-modules with homomorphisms as morphisms.
\end{itemize}

Clearly, these spaces have some very nice properties. Two essential but non-obvious properties are the following:

\begin{itemize}
    \item The direct sum $\oplus$ is both the product and the coproduct.
    \item a morphism $f: u \oplus v\rightarrow s\oplus t$ can be represented as a matrix of morphisms $f = \m{f_{su} & f_{sv}\\f_{tu}&f_{tv}}$ where $f_{ij}: j\rightarrow i$.
\end{itemize}

There is a further idea of an \textbf{abelian category}, which is somewhat more commonly used. However, it is not necessary in this discussion, and excludes certain categories (such as the category of Banach spaces) that we do not wish to lose. 

\subsection{Three running examples}
Throughout this paper we will frequently use the following categories as indexing categories. 

\subsubsection{Example 1: Classical Free Analysis}
A primary example will be the classical free analysis, given as a special case of the more general free calculus presented here. This is more fully explained in the first example at the end of first section. 

The following category --- and those with $n$ vertices, rather than 2 --- provides the free analysis case. 

\begin{center}
    \begin{tikzpicture}[node distance=2cm, auto]
        \node (u) {$u$};
        
        \path[->] (u) edge [loop left] node[auto] {$x$} ();
        \path[->] (u) edge [loop right] node[auto] {$y$} ();
    \end{tikzpicture}
\end{center} 

That is, there is a single vertex, and two loops. Thus the path set is $\langle x, y\rangle$, that is the set of all words in the letters $x, y$; these then (read from right to left, as is standard for functions) are the morphisms, with the empty word as the identity morphism on $u$. 

\subsubsection{Example 2: the Free Category $\mathcal S$ch}
We will consider the free category generated by the following quiver. We will call this category $\mathcal Sch$, because this is the appropriate domain for the schur complement. 

\begin{center}
    \begin{tikzpicture}[node distance=2cm, auto]
        \node (u) {$u$};
        \node (v) [right of = u] {$v$};

        \draw [->] (u) to [bend left] node {$x_{21}$} (v);
        \draw[->] (v) to [bend left] node {$x_{12}$} (u);
        \path[->] (u) edge [loop left] node[auto] {$x_1$} ();
        \path[->] (v) edge [loop right] node[auto] {$x_2$} ();
    \end{tikzpicture}
\end{center}

The set $Arcs(Sch) = \{x_{1}, x_2, x_{12}, x_{21}\}$. The set of morphisms includes such morphisms as $x_{21}x_1$, $x_{12}x_2x_{21}$, $x_2^2x_{21}$, $(x_{12}x_{21})^nx_1^m$, but certainly not $x_1y_2$ or $x_{12}^2$. 

\subsubsection{Example 3: The symmetric group $S_3$}
The third example will be the symmetric group $S_3$, considered as a category. As a category, it has a single vacuous object and 6 morphisms, one for each group element. Composition is defined as the group operation; thus each morphism is an isomorphism. 

In cycle notation, the six morphisms are 
\begin{itemize}
    \item $id = (1)$
    \item $(12)$
    \item $(13)$
    \item $(23)$
    \item $(123) = (23)(12) =(13)(23)=(12)(13)$
    \item $(132) = (12)(23) = (23)(13) = (13)(12)$
\end{itemize}

\section{Free Maps}
\subsection{nc Categories}
Let $Q$ be a category. 

Now choose an additive category $C$, and consider the functor category $C^{Q}$, the category of (covariant) functors from $Q$ to $C$ with morphisms being natural transformations (when $C$ is a vector space category, this is often called a representation).

\begin{definition}
    Firstly, we define a \textbf{natural automorphism} as a natural transformation wherein all of the morphisms are automorphisms.

    Let $C$ be an additive category. 
    
    We say an induced subcategory $U$ of the functor category $C^Q$ is a \textbf{full nc-subcategory} of $C^Q$ so long as it is closed under direct sum and natural automorphism. 

    That is: if $X, Y \in U$, then $X\oplus Y \in U$; if $S$ is a natural automorphism then $S^{-1}XS \in U$.  

   We say that multiple categories, say $\{Q_i\}$, have the same objects if there is some identification of objects $\mathcal V$, consisting of a set $\mathcal V_o = \{u_1, \dots, u_m\}$ and some bijections $\mathcal V_i: \mathcal V_o \rightarrow ob(Q_i)$ for each category $Q_i.$  
    This identity will generally be implicit.
\end{definition}

\subsection{Free Maps}
We will use the terms ``natural transformation'' and ``morphism of $C^Q$" interchangeably, as is appropriate for functor categories.

\begin{definition}
    Let $U\subset C^Q$ and $V \subset C^R$ be full nc subcategories. Then a free map $f: U\rightarrow V$ is a functor that respects natural transformations.
    That is: for any $X,Y \in U$ and natural transformation $\Gamma$ \[ \Gamma X=Y\Gamma \implies \Gamma f(X)= f(Y) \Gamma ,\]
which is to say that the following square commutes. 
\begin{center} \begin{tikzpicture}[node distance=2cm, auto]
  \node (X) {$X$};
  \node (fx) [right of=X] {$f(X)$};
   \node (Y) [below of=X] {$Y$};
  \node (fy) [right of=Y] {$f(Y)$};
  \draw[->] (X) to node [swap] {$\Gamma$} (Y);
  \draw[->] (X) to node {$f$} (fx);
  \draw[->] (Y) to node {$f$} (fy);
  \draw[->] (fx) to node [swap] {$\Gamma$} (fy);
\end{tikzpicture}\end{center}
\end{definition}
Note that this definition does not essentially depend on the additivity of $C;$ any functor categories $C^Q$ and $C^R$ admit functors that commute in this way, restricted though such functors might be. However, we restrict ourselves to the case where $C$ is additive in this discussion.

One important consequence of the definition is that the objects are not changed by free maps. Indeed, if we consider a mapping $\Gamma_v X(v)$ from the object $v$, then if $f(X)(v) \not\cong X(v)$, then $\Gamma_v f(X)(v)$ is not defined. 

It also follows that a composition of free maps is itself a free map. 

\subsubsection{An Introductory Example}

Let us choose $C$ to be the category of finite dimensional Hilbert spaces over $\mathbb C$. We will choose $Q$ to be $\mathcal Sch$. Then the following is an element of the functor category $C^Q$. 

\begin{center}
    \begin{tikzpicture}[node distance=2cm, auto]
        \node (u) {$u$};
        \node (v) [below of = u] {$v$};

        \draw [->] (u) to [bend left] node {$x_{21}$} (v);
        \draw[->] (v) to [bend left] node {$x_{12}$} (u);
        \path[->] (u) edge [loop, out = 45, in = 135, looseness = 4, swap] node[auto] {$x_1$} (u);
        \path[->] (v) edge [loop, out = 225, in = 315, looseness = 4, swap] node[auto] {$x_2$} (v);

        \node (w) [node distance = 3cm, right of = u] {$\mathbb C^7$};
        \node (t) [below of = w] {$\mathbb C^4$};

        \draw [->] (w) to [bend left] node {$C$} (t);
        \draw[->] (t) to [bend left] node {$B$} (w);
        \path[->] (w) edge [loop, out = 45, in = 135, looseness = 4, swap] node[auto] {$A$} (w);
        \path[->] (t) edge [loop, out = 225, in = 315, looseness = 4, swap] node[auto] {$D$} (t);

        \draw[->] (u) to node {X} (w);
        \draw[->] (v) to node {X} (t);
        
    \end{tikzpicture}
\end{center}

Here $C: \mathbb C^4\rightarrow \mathbb C^7$ is a linear transformation; the rest are also linear. We say $X(u) = \mathbb C^7, X(v) = \mathbb C^4$, as well as $X(x_{21}) = C$, $X(x_1) = A$, and so forth. 

Recall that this is in some sense an incomplete picture of $Q$; the remainder of the morphisms are to be filled out by composing the existing morphisms appropriately.

Since this is a functor and thus preserves the composition of morphisms, $x_{21}x_1^2x_{12} \in mor(v)$ implies $X(x_{21})X(x_1)^2X(x_{12}) = CA^2B \in mor(\mathbb C^4) \subset B(\mathbb C^4)$; thus the fact that this multiplication of matrices makes sense is encoded within the free category $Q$. 

By $\Gamma Y = X\Gamma$ we mean that there is a natural transformation $\Gamma$ between $X, Y \in C^Q$ consisting of some linear transformations $\Gamma_u, \Gamma_v$ making all of the squares involving $\Gamma$ commute. Note that $Y(u) = \mathbb C^2$ and $X(u) = \mathbb C^7$, so $\Gamma_u: \mathbb C^2 \rightarrow \mathbb C^7$, and $\Gamma_v: \mathbb C^{13}\rightarrow \mathbb C^4$

\begin{center}
    \begin{tikzpicture}[node distance=2cm, auto]
        \node (u) {$\mathbb C^2$};
        \node (v) [below of = u] {$\mathbb C^{13}$};

        \draw [->] (u) to [bend left] node {$C'$} (v);
        \draw[->] (v) to [bend left] node {$B'$} (u);
        \path[->] (u) edge [loop, out = 45, in = 135, looseness = 4, swap] node[auto] {$A'$} (u);
        \path[->] (v) edge [loop, out = 225, in = 315, looseness = 4, swap] node[auto] {$D'$} (v);

        \node (w) [node distance = 3cm, right of = u] {$\mathbb C^7$};
        \node (t) [below of = w] {$\mathbb C^4$};

        \draw [->] (w) to [bend left] node {$C$} (t);
        \draw[->] (t) to [bend left] node {$B$} (w);
        \path[->] (w) edge [loop, out = 45, in = 135, looseness = 4, swap] node[auto] {$A$} (w);
        \path[->] (t) edge [loop, out = 225, in = 315, looseness = 4, swap] node[auto] {$D$} (t);

        \draw[->] (u) to node {$\Gamma_u$} (w);
        \draw[->] (v) to node {$\Gamma_v$} (t);
        
    \end{tikzpicture}
\end{center}

This means that each of the squares of the following sort commute, where $t, w\in obQ$ and $a \in mor(t,w)$. 

\begin{center} \begin{tikzpicture}[node distance=2cm, auto]
  \node (X) {$Y(t)$};
  \node (fx) [right of=X] {$X(t)$};
   \node (Y) [below of=X] {$Y(w)$};
  \node (fy) [right of=Y] {$X(w)$};
  \draw[->] (X) to node [swap] {$Y(a)$} (Y);
  \draw[->] (X) to node {$\Gamma_t$} (fx);
  \draw[->] (Y) to node {$\Gamma_w$} (fy);
  \draw[->] (fx) to node [swap] {$X(a)$} (fy);
\end{tikzpicture}\end{center}

This diagram is interpreted as $\Gamma_wY(a) = X(a)\Gamma_t$. In particular, this implies the following four equations: \[ \begin{matrix}
\Gamma_uA' = A\Gamma_u &\in B(\mathbb C^2, \mathbb C^7)\\
    \Gamma_uB' = B\Gamma_v &\in B(\mathbb C^{13}, \mathbb C^7)\\ 
    \Gamma_v C' = C\Gamma_u &\in B(\mathbb C^2, \mathbb C^4)\\
    \Gamma_v D' = D\Gamma_v&\in B(\mathbb C^{13}, \mathbb C^4)
\end{matrix}.\]
Now that we have explored a natural transformation, let us consider a free map $f: C^Q \rightarrow C^R$, where $R$ is the free category 

\begin{center}
    \begin{tikzpicture}[node distance=1cm, auto]
        \node (u) {$u$};
        \node (v) [below of = u] {$v$};

        \draw [->] (u) to node {$x_{21}$} (v);
        \path[->] (u) edge  [loop, out = 45, in = 135, looseness = 4, swap] node [auto] {$x_1$} (u);

    \end{tikzpicture}
\end{center}

all of whose morphisms are $x_1^nx_{21}$ for some natural number $n$. Then consider the map $f: C^Q\rightarrow C^R$, and in particular we have $X \mapsto_f f(X)$. 
\begin{center}
 \begin{tikzpicture}[node distance=2cm, auto]
        \node (w) [node distance = 3cm, left of = u] {$\mathbb C^7$};
        \node (t) [below of = w] {$\mathbb C^4$};

        \draw [->] (w) to [bend left] node {$C$} (t);
        \draw[->] (t) to [bend left] node {$B$} (w);
        \path[->] (w) edge [loop, out = 45, in = 135, looseness = 4, swap] node[auto] {$A$} (w);
        \path[->] (t) edge [loop, out = 225, in = 315, looseness = 4, swap] node[auto] {$D$} (t);

        \node (u) {$\mathbb C^7$};
        \node (v) [below of = u] {$\mathbb C^4$};

        \draw [->] (u) to node {$f_{21}(X)$} (v);
        \path[->] (u) edge [loop, out = 45, in = 135, looseness = 4, swap] node[auto] {$f_1(X)$} (u);

        \draw[->] (w) to node {$f$} (u);
        \draw[->] (t) to node {$f$} (v);

    \end{tikzpicture}
    \end{center}

Note that the objects are constant, it is the morphisms that are changed by this map. For $f$ to be free, then, because we have $\Gamma Y = X\Gamma$ we need $\Gamma f(Y) = f(X) \Gamma$, which is to say the squares over $\Gamma$ in the following diagram all commute. 

   \begin{center}
 \begin{tikzpicture}[node distance=2cm, auto]
        \node (w) [node distance = 3cm, left of = u] {$\mathbb C^2$};
        \node (t) [below of = w] {$\mathbb C^{13}$};

        \draw [->] (w) to node {$f_{21}(Y)$} (t);
        \path[->] (w) edge [loop, out = 45, in = 135, looseness = 4, swap] node[auto] {$f_1(Y)$} (w);

        \node (u) {$\mathbb C^7$};
        \node (v) [below of = u] {$\mathbb C^4$};

        \draw [->] (u) to node {$f_{21}(X)$} (v);
        \path[->] (u) edge [loop, out = 45, in = 135, looseness = 4, swap] node[auto] {$f_1(X)$} (u);

        \draw[->] (w) to node {$\Gamma_u$} (u);
        \draw[->] (t) to node {$\Gamma_v$} (v);

    \end{tikzpicture}
    \end{center}

    The equations associated with this are \[\begin{matrix}
        \Gamma_uf_1(Y) = f_1(X)\Gamma_u &\in B(\mathbb C^2, \mathbb C^7), \\ 
        \Gamma_vf_{21}(Y) = f_{21}(X)\Gamma_u &\in B(\mathbb C^2, \mathbb C^4).
    \end{matrix}\]
    If $f_1 = x_{12}x_2x_{21}-x_1x_{12}x_{21}x_1+2x_1^2$ and $f_{21} = x_{21}x_1^2+2x_2x_{21}x_1+x_2^2x_{21}$, then by using the equations of $\Gamma Y = X\Gamma$ we can verify that the equations \[\begin{split}f_1(X)\Gamma_u = (BDC- ABCA + 2A^2)\Gamma_u &= \Gamma_u( B'D'C'- A'B'C'A' + 2A{'}^{2})  = \Gamma_uf_1(Y)\\
    f_{21}(X)\Gamma_u = (CA^2 + DCA + D^2C)\Gamma_u &= \Gamma_v(C'A{'}^2 + D'C'A' + D{'}^2C) = \Gamma_vf_{21}(Y)
    \end{split}\]
hold. Thus $f$ is a free map. 

Note that this example has not relied on the nc subcategories, direct sums, or additivity in general; the free function we showed at the end required some notion of addition on the morphisms, and thus a pre-additive category. Free maps are a natural categorical construction, and deserve broader analysis than only on additive categories. From here on, however, we will assume again that $C$ is additive.

Additionally, we see that free maps are coordinate free; though we provide a labelling, the natural categorial notion that ``isomorphism is indistinguishable from equality" shows that the calculus herein is entirely independent of how we choose to label the morphisms. 

\begin{theorem}
    Let $C$ be an additive category, and let $U \subset C^Q$, $V\subset C^R$ be nc subcategories. A functor $F: U \rightarrow V$ is a free map if and only if it respects direct sums and natural automorphisms. 
\end{theorem}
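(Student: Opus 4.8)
The plan is to mirror the classical Proposition (a function is free iff it respects direct sums and similarity), illustrating the paper's slogan that natural proofs lift naturally. I will prove the two implications separately, working componentwise over objects and arcs and using the additive-category fact that a morphism between direct sums is a matrix of morphisms.

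Forward direction (free $\Rightarrow$ respects $\oplus$ and automorphisms). For automorphisms, given a natural automorphism $S$ and $Y = S\ii X S$, the components $S_u$ form a natural isomorphism $Y \to X$ (naturality is automatic since conjugation fixes objects), so $S$ intertwines $Y$ and $X$; applying freeness with $\Gamma = S$ yields $S f(Y) = f(X) S$, that is $f(S\ii XS) = S\ii f(X) S$. For direct sums, set $Z = X \oplus Y$ and use the canonical injections $\iota_X = \m{I\\0}\colon X \to Z$ and $\iota_Y = \m{0\\I}\colon Y\to Z$; each is a natural transformation (checked arcwise), so freeness gives $\iota_X f(X) = f(Z)\iota_X$ and $\iota_Y f(Y) = f(Z)\iota_Y$. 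Reading these as the two block-columns of $f(Z)$ forces $f(Z) = \M{f(X) & 0\\0 & f(Y)}$, i.e. $f(X\oplus Y) = f(X)\oplus f(Y)$.

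Converse (respects $\oplus$ and automorphisms $\Rightarrow$ free). This is the crux. Given an intertwiner $\Gamma X = Y\Gamma$, package it as the unipotent family $S_u = \m{I & 0\\\Gamma_u & I}$ on $(X\oplus Y)(u)$, which is a natural automorphism with inverse $\m{I&0\\-\Gamma_u&I}$. A direct block computation shows $S\ii(X\oplus Y)S = X\oplus Y$ precisely because the off-diagonal block $Y(a)\Gamma_u - \Gamma_v X(a)$ vanishes by the intertwining relation; equivalently, $S$ commutes with $X\oplus Y$. Since $f$ respects automorphisms, $f(X\oplus Y) = f(S\ii (X\oplus Y) S) = S\ii f(X\oplus Y) S$, so $S$ commutes with $f(X\oplus Y)$ as well. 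Finally, since $f$ respects direct sums, $f(X\oplus Y) = f(X)\oplus f(Y)$, and unpacking the commutation of $S$ with the block-diagonal $\M{f(X)&0\\0&f(Y)}$ reads off exactly $\Gamma f(X) = f(Y)\Gamma$.

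The main obstacle is conceptual rather than computational: in a general additive category one cannot manipulate scalar matrix entries freely, so every step must be phrased through the universal properties of the direct sum and the matrix-of-morphisms description. The key insight to locate is that the intertwiner must be encoded as a unipotent automorphism of $X\oplus Y$ that commutes with $X\oplus Y$; the automorphism-respecting hypothesis then transports this commutation across $f$, and the direct-sum hypothesis converts it back into the desired intertwining. One should also verify that the relevant objects ($X\oplus Y$ and its conjugate) lie in the nc-subcategory $U$, which is guaranteed by closure under direct sum and natural automorphism.
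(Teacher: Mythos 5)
Your proposal is correct and takes essentially the same route as the paper: the crux in both is encoding the intertwiner $\Gamma$ as a unipotent natural automorphism of $X\oplus Y$ (your $\m{I & 0\\ \Gamma & I}$ is the paper's $\m{id_X & \Gamma \\ & id_Y}$ up to a transposed convention), observing that it commutes with $X\oplus Y$ exactly when the intertwining relation holds, and transporting that commutation through $f$ via the automorphism and direct-sum hypotheses; the forward direction likewise matches, using injections/projections as intertwiners. If anything, your use of both canonical injections pins down the off-diagonal block of $f(X\oplus Y)$ slightly more explicitly than the paper's one-line sketch with $\m{id_X \\ 0}$ and $\m{0 & id_Y}$.
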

\begin{proof}
   $\impliedby$ In any additive category the direct sum is well-defined and the morphism $f: A_1 \oplus A_2 \rightarrow B_1\oplus B_2$ can be decomposed $f = \m{f_{11} & f_{12} \\ f_{21} & f_{22}}$ where $f_{ij}: A_j\rightarrow B_i$. 
   Then for $X, Y\in C^{Q}$ consider $X \oplus Y$. If $\Gamma$ is a natural transformation $Y\rightarrow X$, then $\m{id_X & \Gamma \\ & id_Y}$ is an automorphic natural transformation $X\oplus Y \rightarrow X\oplus Y$; indeed, its inverse is $\m{id_X & -\Gamma \\ & id_Y}$, where we define $-\Gamma$ by taking $-\gamma$ for every morphism $\gamma$ making up $\Gamma$. This is well defined because $Hom_C(A, B)$ is an abelian group in any additive category $C$. 
   Then let $f_{ij}: X_j \rightarrow X_i$, and $g_{ij}: Y_j \rightarrow Y_i$ be any morphisms in $X, Y$ respectively, with $X_i, X_j \in ob(X)$, and similarly for $Y$, so that $X_i \oplus Y_i \in ob(X\oplus Y)$.  
   Then consider $f_{ij}\oplus g_{ij} = \m{f_{ij} & \\ &g_{ij}}$. We verify that $\m{id_X & \Gamma \\ & id_Y}$ is a natural transformation by checking \[\M{id_{X_i} & \Gamma _i \\ & id_{Y_i}}\M{f_{ij} & \\ & g_{ij}}=\M{f_{ij} & \\ & g_{ij}}\M{id_{X_j} & \Gamma _j \\ & id_{Y_j}},\]
which is true so long as $\Gamma_i g_{ij} = f_{ij}\Gamma_j$. 

Then if we assume that $F$ respects direct sums and automorphic natural transformations, we apply it to this equation and and immediately obtain $\Gamma F(g_{ij}) = F(f_{ij})\Gamma_j$, so $F$ is a free map.

   $\implies$ Let $F$ be a free map. Since it respects natural transformations, it respects automorphic natural transformations. 
   But then by using projective morphisms, namely $\m{id_X \\ 0}$, $\m{0 & id_Y}$, we can easily show that direct sums are preserved as well. 
\end{proof}

This sort of theorem is very valuable, in part because it gives us a more tractable characterization. We can check that something is free by checking only that it works with diagonal concatenation and conjugation; these are often reasonably easy things to verify. 

Note that this result holds in any additive category; we require many properties thereof, primarily direct sums, that hom-sets are abelian groups, and the ability to represent morphisms of direct sums as matrices. We do not require images or kernels to be unique, and thus do not require abelian categories; everything in this paper is applicable to general additive categories. 

Perhaps more important, however, is we can track how certain simplifying steps will be treated. Certain sorts of methods of linearization or block diagonalization --- common in realization theory, matrix analysis, and many other fields --- can be tracked through any free function, as evidenced by this result.   

\subsection{Types of Free Maps}

\begin{definition}
Let $f: C^Q \rightarrow C^R$ be a free map. By $f_j(X)$ we denote the image $f(X)(j)$ of the arc $j$ of $R$ over the functor $f(X)$. 

We say that a morphism in $X$ has \textbf{degree} $k$ if it is the image under $X$ of a path of length $k$. 
    \begin{itemize}
\item We say that $f$ is a \textbf{monomial} if each $f_j(X) = X(x^{k_j})$ for some morphism $x^{k_j}$ in $X$. 
\begin{itemize}
    \item It is said to be of degree $k$ if the highest degree morphism is degree $k$.
    \item It is said to be strictly of degree $k$ if all morphisms have degree $k$. 
\end{itemize}
\item We say that $f$ is a \textbf{polynomial} if each $f_j(X) = \sum_{i=0}^nk_ix^{\ell_i}$, where $k_i$ is an element of the appropriate abelian group.
\begin{itemize}
    \item It is said to be of degree $k$ if $k$ is the highest degree of any monomial appearing in any $f_j$. 
\end{itemize}
\item We say that $f$ is an \textbf{inversion map} if $f_j(X)$ is either an arc of $X$ or the inverse of an arc of $X$. 
\item We say $f$ is \textbf{rational} if it is a finite composition of polynomials and rational functions.
\begin{itemize}
    \item These form rational expressions, rather than rational functions per se. 
\end{itemize}
    \end{itemize} 
\end{definition}

This is certainly not an exhaustive list.

A more elegant description of non-commutative rational functions would certainly be nice, but such a description remains elusive. See [Rat] for an extensive description and discussion in the classical free analysis case. 

Consider a free map $f: C^Q\rightarrow C^R$. In some cases, $f$ can be fully characterized by some functor $f_*: R\rightarrow \tilde Q$. This relationship is encoded by the below diagram commuting, that is $f_*X= f(X)$.  $\tilde Q$ is some sort of enrichment of $Q$; we will use the nature of that enrichment to characterize our functions. 

\begin{center} \begin{tikzpicture}[node distance=2cm, auto]
  \node (C) {$C$};
  \node(b) [below of = C]{};
  \node (Q) [ left of=b] {$\tilde Q$};
  \node (R) [right of=b] {$R$};
  \draw[->] (Q) to node {$X$} (C);
  \draw[->] (R) to node[swap] {$f(X)$} (C);
  \draw[->] (R) to node {$f_*$} (Q);
\end{tikzpicture}\end{center}

This diagram makes no reference to the fact that $f$ is free, but it should not be forgotten.

In order to appropriately discuss these enrichments, we define the two following operations. 
\begin{definition}
    For an abelian group $k$, a \textbf{$k-$linear enrichment} of a category is consists of forming a module on each homset, taking the elements of the hom-set as generators. 
    
    In particular, if $Q$ is a free category, the $k-$linear enrichment of $Q$ will have each $hom(u,v)$ be a $k-$module with the morphisms in $mor(u, v)$ as the generators. 

    If we have a $k-$linear enrichment of a category, add some additional morphisms, and then take the $k-$linear enrichment of the new category, the old relationships will still be preserved. The new elements can now be added to the old. 

This is a specific instance of enrichment over an abelian category. 

A \textbf{formal inversion} of a category consists of adding formal inverses for every morphism that is neither a zero morphism nor already an isomorphism. 
\end{definition}

\begin{proposition}\label{structure}
Monomials, polynomials, and rational functions can be fully characterized by a functor $f_* :R\rightarrow \tilde Q$, as discussed above. 

In particular, they can be categorized as follows. 
\begin{itemize}
    \item $f$ is a monomial if and only if $\tilde Q$ can be taken to be $Q$. 
    \item $f$ is a polynomial if and only if $\tilde Q$ can be taken to be $\overline Q_k$, the $k-$linear enrichment of $Q$.
    \item $f$ is rational if and only if $\tilde Q$ can be formed using a finite sequence of $k$-linear enrichments and formal inversions. 
\end{itemize}
\end{proposition}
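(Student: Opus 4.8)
The plan is to prove all three characterizations from a single underlying mechanism: an \emph{extension lemma} for functors across enrichments, combined with the elementary fact that precomposition by a fixed functor carries natural transformations to natural transformations, and hence is always a free map. I would first isolate this extension lemma. For the $k$-linear enrichment $\overline Q_k$, the universal property of the free $k$-linear category says precisely that any functor $X: Q \rightarrow C$ into an additive category factors uniquely through an additive functor $\overline X : \overline Q_k \rightarrow C$, given on generators by $\overline X(\sum_i k_i \gamma_i) = \sum_i k_i X(\gamma_i)$; this is well defined because each $\mathrm{Hom}_C(Xa, Xb)$ is an abelian group and composition is bilinear. For a formal inversion I would invoke the universal property of localization: a functor $X$ extends to $\tilde X$ on the formally inverted category exactly over the subcategory where each inverted morphism has an invertible image in $C$, and on that locus the extension is forced. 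Iterating these two statements yields the extension lemma for any finite interleaving of $k$-linear enrichments and formal inversions.

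With the lemma in hand, both directions for each case are short. In the forward direction I read the defining data of $f$ off its formula and use it to define $f_*$ on the arcs of $R$, which suffices since $R$ is free. For a monomial, $f_j(X) = X(x^{k_j})$, so I set $f_*(j) = x^{k_j} \in \mathrm{mor}(Q)$ and take $\tilde Q = Q$; then $f(X) = X \circ f_*$ on the nose. For a polynomial, $f_j(X) = \sum_i k_i X(x^{\ell_i})$, so I set $f_*(j) = \sum_i k_i x^{\ell_i}$, now a genuine morphism of $\overline Q_k$, and $f(X) = \overline X \circ f_*$. For a rational expression, the nesting of sums and inverses is captured by taking $\tilde Q$ to be the corresponding finite interleaving of $k$-linear enrichments and formal inversions, with $f_*(j)$ the formal rational word; then $f(X) = \tilde X \circ f_*$ holds wherever $f$ is defined. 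The converse is immediate in each case: given $f_*: R \rightarrow \tilde Q$ with $\tilde Q$ of the stated shape, the identity $f_j(X) = \tilde X(f_*(j))$ exhibits each $f_j$ in monomial, polynomial, or rational form, and $X \mapsto \tilde X \circ f_*$ is a free map because a natural transformation $\Gamma: Y \Rightarrow X$ extends across the enrichment to $\tilde\Gamma: \tilde Y \Rightarrow \tilde X$ (the enrichments introduce no new objects, so the components are unchanged), and whiskering $\tilde\Gamma$ by $f_*$ produces the required $f(Y) \Rightarrow f(X)$.

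I expect the formal-inversion step of the rational case to be the main obstacle, for two reasons. First, a single formal inversion as defined inverts \emph{all} non-trivial morphisms at once, so I must check that the localization exists and that its universal property still delivers $\tilde X$ on the locus where the particular denominators occurring in $f$ become invertible; the extension is only partial, and I would phrase $f_* X = f(X)$ as an equality of functors over that subdomain rather than globally. Second, I must argue that an arbitrary rational expression, with its finite but unbounded nesting depth of inverses and sums, genuinely lives in \emph{some} finite interleaving $\tilde Q$; this is a straightforward induction on the construction of the expression, peeling off one layer of inversion or one layer of linear combination at a time, but it is where the bookkeeping is actually needed. The monomial and polynomial cases, by contrast, rest only on the clean universal property of the linear enrichment and should be essentially formal.
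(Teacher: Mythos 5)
Your proposal is correct and takes essentially the same route as the paper's own proof: define $f_*$ on the arcs of $R$ by reading the coefficients and words off the formula for $f$, extend $X$ to $\overline Q_k$ (or the formally inverted category, on the regularity domain) so that $f(X) = \tilde X \circ f_*$, and handle monomials by dropping the sum and rational expressions by iterating. You are in fact more careful than the paper, which treats the extension of $X$ across the enrichment as implicit, dispatches the rational case in a single sentence, and does not spell out the converse directions or the whiskering argument that your version makes explicit.
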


This proposition removes any qualms about considering free nc rational functions in terms of $k-$linear sums of formal noncommuting letters $x$. Thus, after this point, we will identify $f_*$ with $f$. 

\begin{proof}
    Consider a free nc polynomial $f$, with $f_j(X) = \sum_i k_i X^i$ for each $X$, and a functor $X$ mapping $x_j \mapsto X(x_j)$ for each arc $x_j$ of $Q$. 
    
    Note we define $X^k := X(x^k) = X(x_{i_1}\circ \dots \circ x_{i_k})=X(x_{i_1})\circ\dots \circ X(x_{i_k})$ where $x^k := x_{i_1}\circ \dots \circ x_{i_k}$ is a path in $Q$. 
    
    Now consider $f_*$ given by $f_{*j} = \sum_k k_kx^k$, which is a functor from $R \rightarrow \overline Q_k$. Then because of the relationship referenced above, for any given $X$ we have $f_{*j}X = \sum_k k_kX^k = f_j(X)$. Thus the polynomial $f$ is entirely identifiable with a functor $f_*: R\rightarrow \overline Q_k$. 

    This same argument, simply removing the sum, works a fortiori on monomials. 

    Since rational functions are already defined in terms of polynomials and inversion maps, for whichever $X$s that have the appropriate inverses, rational expressions are just polynomials on adjusted domains. 
\end{proof}
Might all free maps be characterized in the same way? 

As a result of all of this, we are justified in using the following notation. If $f$ is a polynomial, then we consider $f: Q \rightarrow R$ to consist of, at each arc $x \in ArcsR$, some  \[f_x = \sum_{i}k_iw_i\] where $w_i \in mor(s(x), t(x))$, $k_i\in k$. We then evaluate it on some $X \in C^Q$, by replacing $w_i$ by $X(w_i)$. 

Similarly, we can view rational functions as $f_x = \sum_ik_ig_{i}$
where $g_i$ is a product of polynomials and the inverses of polynomials, such that $g_i$ ends up parallel to $x$. 

While it is easy to see that monomials and polynomials (for an appropriate $k$) are free maps, it is far from obvious that an inversion map is a free map. We will prove that it is well defined on certain domains. 

\subsection{Rational Expressions and Regularity Domains}

To keep our notation easily comprehensible, we will refer to split monomorphisms as having left inverses, and split epimorphims as having right inverses. 

\begin{proposition}
    Choose $a$ morphism $a$ of $Q$. Assume there exists some $\tilde R$ that admits an additional morphism $b: t(a) \rightarrow s(a)$. 
    
    If $x = X(a)$ has a left (right) inverse for every $X \in U\subset C^Q$ 
    then the map $f:U\rightarrow C^{\tilde R}$ with $f(X) = X_{a^{-1}}$ is a free map.
\end{proposition}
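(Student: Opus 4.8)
The plan is to define $f$ explicitly and then certify freeness through the characterization theorem proved above, leaning on the paper's slogan that \emph{natural proofs lift naturally}: the one-line classical fact that conjugation commutes with inversion should survive verbatim in the functor-categorical setting. Concretely, $f(X)$ is the functor on $\tilde R$ that agrees with $X$ on every arc inherited from $Q$ and sends the new arc $b: t(a)\to s(a)$ to the prescribed inverse of $X(a): X(s(a))\to X(t(a))$; since free maps fix objects, $f(X)$ carries the same object assignment as $X$ and so lands in $C^{\tilde R}$. By the characterization theorem it then suffices to check that $f$ respects direct sums and natural automorphisms, rather than testing the intertwining square against every natural transformation.

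For a genuine two-sided inverse the verification is immediate and in fact holds against an arbitrary intertwiner. Given $\Gamma$ with $\Gamma X = Y\Gamma$, the naturality square over $a$ reads $\Gamma_{t(a)}X(a) = Y(a)\Gamma_{s(a)}$; left-multiplying by $Y(a)\ii$ and right-multiplying by $X(a)\ii$ gives $\Gamma_{s(a)}X(a)\ii = Y(a)\ii\Gamma_{t(a)}$, which is precisely the naturality square for $f(X), f(Y)$ over $b$. The direct-sum and automorphism requirements are special cases of this identity: $(X\oplus Y)(a)\ii = X(a)\ii\oplus Y(a)\ii$, and for a natural automorphism $S$ one has $\bigl((S\ii X S)(a)\bigr)\ii = \bigl(S_{t(a)}\ii X(a) S_{s(a)}\bigr)\ii = S_{s(a)}\ii X(a)\ii S_{t(a)} = (S\ii f(X) S)(b)$, so $f$ commutes with both operations.

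The main obstacle is the genuinely one-sided case, where $X(a)$ is only a split monomorphism (or split epimorphism) and ``the'' inverse is not canonical: in an additive category a retraction of a split mono is the projection attached to a choice of complement, and distinct complements give distinct left inverses, so $f$ is not even well defined until a choice is fixed. Worse, the cancellation above breaks — repeating the computation with chosen left inverses $L_X, L_Y$ of $X(a), Y(a)$ only yields $\bigl(L_Y\Gamma_{t(a)} - \Gamma_{s(a)}L_X\bigr)X(a) = 0$, which forces the bracket to vanish only when $X(a)$ is epic, exactly what a split mono need not be. My plan to close this gap is to transport rather than freely choose the retraction: restricting (as the characterization theorem permits) to natural automorphisms $S$, if $L$ is a left inverse of $X(a)$ then $S_{s(a)}\ii L\, S_{t(a)}$ is a left inverse of $(S\ii X S)(a) = S_{t(a)}\ii X(a) S_{s(a)}$, since $(S_{s(a)}\ii L\, S_{t(a)})(S_{t(a)}\ii X(a) S_{s(a)}) = S_{s(a)}\ii L X(a) S_{s(a)} = \mathrm{id}$, so a conjugation-equivariant assignment of retractions exists along each orbit. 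The remaining — and I expect hardest — point is to make this assignment simultaneously additive, i.e. compatible with direct sums; this is where the nonuniqueness must be pinned down, either by singling out a distinguished retraction (for instance an adjoint-induced one when $C$ carries that structure) or by restricting $U$ to functors for which the one-sided inverse of $X(a)$ is uniquely determined.
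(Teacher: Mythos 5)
Your handling of the two-sided case is correct and is essentially the paper's argument (reduce to the characterization theorem, then check that inverses of direct sums and of conjugates are the direct sums and conjugates of inverses); your version is if anything sharper, since you also verify naturality against an arbitrary intertwiner directly. But the obstacle you flag in the one-sided case is not a defect of your write-up that a cleverer argument would remove: it is a gap in the paper's own proof. The paper asserts that $t\ii x\ii s$ ``is the left inverse'' of $s\ii x t$ and that ``this is exactly the appropriate relation for it to have when conjugated by $S$,'' silently treating left inverses as unique. What is actually proved there is only that \emph{some} left inverse of the conjugate (resp.\ of the direct sum) has the desired form; freeness requires a single assignment $X \mapsto f(X)(b)$ that is itself natural, which is precisely the coherence problem you isolated.

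In fact no such assignment exists in general, so the proposition as stated fails for genuinely one-sided inverses. Let $C$ be the category of finite-dimensional vector spaces over $k$, let $Q$ be free on one arc $a: u \to v$, let $U$ be the full nc subcategory of functors sending $a$ to a split monomorphism, and take $X(u) = k$, $X(v) = k^2$, $X(a) = \m{1\\0}$. Every left inverse of $X(a)$ has the form $L = \m{1 & c}$. The pair $\Gamma_u = 1$, $\Gamma_v = \m{1 & 1\\0 & 1}$ is a natural automorphism of $X$, since $\Gamma_v X(a) = X(a)\Gamma_u$; freeness of $f$ would force $L\Gamma_v = \Gamma_u L$, i.e.\ $\m{1 & 1+c} = \m{1 & c}$, which fails for every $c$. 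The same example defeats your proposed repair: transporting a retraction along a conjugation orbit is well defined only when the choice is invariant under the stabilizer of $X$, and this $\Gamma$ stabilizes $X$ while fixing no retraction. Note also that your cancellation $(L_Y\Gamma_{t(a)} - \Gamma_{s(a)}L_X)X(a) = 0$ can only be closed when $X(a)$ is epic, and a split mono that is epic is already invertible, so there is no intermediate regime in which that route rescues the one-sided statement. The honest conclusion is that the proposition and its proof are sound exactly in the two-sided case, and that the one-sided case needs genuinely stronger hypotheses --- a canonical, natural choice of retraction, or a restriction of $U$ and of the transformations considered; your diagnosis of where and why it breaks is exactly right.
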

\begin{proof}
  All that is required is to show that it respects direct sums and natural automorphism. For every object and morphism in $X$, this is trivial; thus we need only concern ourselves with $x^{-1}$, the left inverse of $x$, and paths containing it. 

    Consider the left inverse of $x\oplus y = \m{x&\\&y}$, where $y=Y(a)$ for some $Y\in U$. We have $\m{x^{-1} &\\&y^{-1}}\m{x&\\&y} = \m{1_x&\\&1_y}$, where these are the identities on the appropriate source objects. Thus $\m{x^{-1}&\\&y^{-1}} = x^{-1}\oplus y^{-1}$ is the left inverse of $x\oplus y$. 

    Now let $S$ be some natural automorphism, so that $(S^{-1}XS)(a)=s^{-1}xt$. Then we have $t^{-1}x^{-1}ss^{-1}xt=1,$ telling us that $t^{-1}x^{-1}s$ is the left  inverse. But this is exactly the appropriate relation for it to have when conjugated by $S$. 
    
    From this point, it is a simple exercise to check that any composition containing $x^{-1}$ also respects direct sums and natural automorphism, applying what was shown above. 

    The same method demonstrates the result on right inverses. 
    
    Thus on the appropriate domains, the inversion map is a free map.
\end{proof}

With this result in mind we make a brief aside about regularity domains; that is, domains on which particular rational expressions are well defined. 

\begin{definition}[regularity domains]
The regularity domain of a left (right) inversion map on the morphism $a$ is the induced subcategory $\mathcal R\subset C^Q$ where $X(a)$ has a left (right) inverse as a morphism of $C$ for every $X\in \mathcal R$. 

Let $g$ be a left (right) inversion map on $a$.

By $\mathcal R_U(g)$ we denote the sub-category of $U$ that is contained in $\mathcal R(g)$. If $U$ is an nc-subcategory, then so is $\mathcal R(g)$; indeed, if $X(a), Y(a)$ are left invertible, then so is $X(a)\oplus Y(a)$ and $S^{-1}X(a)S$.

The regularity domain of a rational function $f$ is the maximal induced subcategory $\mathcal R \subset C^Q$ in which all of the required inverses exist.

Because the image of an nc subcategory under a free map is an nc subcategory, we can apply the previous fact and see that the regularity domain of a rational function is an nc-subcategory. 
\end{definition}

With these several tools, there is a path to develop a proper theory of rational functions. The theory of germs transfers, allowing us to form equivalence classes defined in terms of agreement on domains; we then say that these equivalence classes are rational functions, as is standard in algebraic geometry and related topics. The details of this discussion are not critical to this paper, and will be dealt with in the future. 

One main hurdle seems to be that the determinant does not respect additivity, which stalls much of the machinery used in algebraic geometry to discuss rational functions. 

\subsection{Example 1: Free Analysis}

\subsubsection{The Functor Category}
Choose $Q$ to be the free monoid $\langle x, y\rangle$. This is the free category with one object $v$ and two generating arcs $x, y$. Choose $C$ to be the category of finite dimensional vector spaces over the field $k$.

Then a functor $X\in C^Q$ is fully defined by a choice of $X(v)= k^n$ for some $n$ and choosing $X(x), X(y) \in M_n(k)$, the set of $n\times n$ matrices over $k$. Each remaining morphism is sent to the corresponding product of matrices, such as  $xyx\mapsto X(x)X(y)X(x)$. The object $X = (X_1, X_2) \in  M_n^2(k)$ fully describes the functor;  the space $k^n$ is inferred from the action of the $X_i$, and the remainder of the morphisms are gained by exhaustive composition of these two. As such, we refer to a general point as $X\in M^2 := \bigcup_{n\in\mathbb N}M_n^2$. This is the typical notational convention in the classical literature on free analysis. 

\subsubsection{Free Subcategories}
A natural automorphism $S$ here consists of a matrix $S \in GL_n(k)$. 

Then we can also choose $Y \in C^Q$ with $Y(v) = k^m$, and $Y(x),Y(y)\in M_m(k)$. If both $X, Y$ are in the nc-subcategory $U$ of $C^Q$, then so are the following (as a representative sample). 

\begin{itemize}
    \item $X\oplus Y$ 
    \begin{itemize}
        \item  Object: $(X\oplus Y)(v) = X(v)\oplus Y(v) \cong k^{nm}$.
        \item  Morphisms: $(X\oplus Y)(x) = X(x)\oplus Y(x) = \m{X(x)&\\&Y(x)}$ and $(X\oplus Y)(y)$.
    \end{itemize}
    \item $S\ii XS$
    \begin{itemize}
        \item  Object: $(S^{-1}XS)(v) \cong X(v)$
        \item Morphisms: $(S\ii XS)(x) = S\ii X(x)S$ and $S\ii X(y)S$.
    \end{itemize}
    \item $T\ii(X\oplus Y)T$ for $T\in GL_{nm}(k)$
    \begin{itemize}
        \item  Object: $k^{nm}$ 
        \item Morphisms: $T\ii (X\oplus Y)T(x)=T\ii(X(x)\oplus Y(x))T = T\ii\m{X(x)&\\&Y(x)}T$.
    \end{itemize}
\end{itemize}

\subsubsection{Monomials and Polynomials}
As we will show here, the classical free polynomials are free maps. 

Consider the functors $X= (X_1, X_2) \in M_n^2(k)$, and $Y = (Y_1, Y_2)\in M_m^2(k)$. Our natural transformations $\Gamma$ must live within $M_{nm}$. We can write the relation $X\Gamma = \Gamma Y$ as $(X_1, X_2)\Gamma = \Gamma (Y_1, Y_2)$, that is $X_1\Gamma = \Gamma Y_1$ and $X_2\Gamma = \Gamma Y_2$. 

As long as these two morphisms are related in this way, then all the other morphisms are similarly associated; indeed, if $w \in \langle x_1, x_2\rangle$ is given by $w = x_{i_1}\dots x_{i_\ell}$ ($i_1, \dots, i_n \in \{1, 2\}$, then the corresponding morphism in $X$ is $X_{i_1}\dots X_{i_\ell}$. Thus \[\begin{split}X_{i_1}\dots X_{i_\ell}\Gamma &= \\ &= X_{i_1}\dots X_{i_\ell-1}\Gamma Y_{i_\ell}\\&\hspace{2mm}\vdots\\&= \Gamma Y_{i_1}\dots Y_{i_\ell}.
\end{split}\]
Thus a free monomial map is some $f: U \in C^Q \rightarrow C^R$ where $R$ is the free category with one vertex and $\alpha$ arcs $x_1, \dots, x_\alpha$ and $f(X) = (f_1(X), \dots, f_\alpha(X))$, and each $f_\alpha(X)$ is some morphism in $X$. 

A free polynomial map, then, is $f: U\rightarrow C^R$ with $f_i(X) = \sum_{j=1}^{i_i}k_jp_j$ where $k_j\in k$ and $p_j\in \langle X\rangle$. But this is free, because the linear transformations commute with elements of the base field. 

These results can all be verified by considering them in terms of direct sums and similarity; indeed, $p(X)\oplus p(Y) = p(X\oplus Y)$, and $p(S\ii XS)=S\ii p(X)S$ whenever $p$ is a monomial, and the rest follows similarly. Thus these properties are inherited from the additive properties of the category $C$. This inheritance will be further studied in section \ref{inheritance}.

\subsubsection{Inversion Map}
Consider the monoid $Q = \langle x\rangle$ with one generator, which is isomorphic to the free category with one object and one arc, and also to $\mathbb N$. Then let $X \in C^Q$ be invertible; that is, the morphism $X(x)$ is invertible. 

Now consider an inversion map $inv: Q\rightarrow R$ that retains $x$, but formally appends its inverse. We now have two generators, $x$ and $x^{-1}$, with of course the requirement that $x^{-1}x=1$; we now have that $R$ is isomorphic to $\mathbb Z$. This is now a group representation, with all of the restrictions thereof, as discussed in example 3. 

\subsubsection{Analytic Functions}
If we choose $\|\cdot\|$ to mean the maximum norm of any morphism in $X$, then $\|X\| \leq 1$ allows $f(X) = (f_1(X), \dots, f_\alpha(X))$ to be a free nc map if each \[f_i(X)=\sum_{j=1}^{\infty}k_jp_j\]
where $p_j\in \langle X\rangle$ and $\sum_{j=1}^\infty{k_j} <\infty$. Again, this is because $\Gamma$ commutes with elements of the base field. 

Similarly, we can define the standard exponential function $exp: U \in M\rightarrow M$ by $exp(x) = \sum_{n=0}^{\infty}\frac{x^n}{n!}$. We see that it is free because if $X\Gamma = \Gamma Y$ then $X^n\Gamma = \Gamma Y^n$, and all rational numbers commute with $X$. 

\subsubsection{A Rational Expression}
An example of a rational expression is $f:U\subset M_2 \rightarrow M_3$  given by $f(x,y) = (x\ii y^2, 3(yx-xy), x(y-x)\ii y)$. This rational expression can be constructed as $f(x,y) = g(i(j(\ell(x,y))))$ where \[\begin{split}
    &\ell: U\in M^2 \rightarrow M^3 \text{ with } \ell(x,y) = (x,y,y-x),
    \\\text{the inversion map } &j:U'\subset M^3 \rightarrow M^4 \text{ with } j(x,y,z) = (x,y,z,z\ii), \\
    \text{the inversion map } &i:U''\subset M^4\rightarrow M^5 \text{ with } i(x,y,z,w)=(x,y,z,w,x\ii), \\ \text{ and finally }&g:M^5\rightarrow M^3 \text{ with } g(x,y,z,w,v) = (vy^2, 3(yx-xy), xzy)
\end{split}\]
The regularity domain of this expression is $(X,Y) \in U \subset M^2$ so that $X, Y-X$ are both invertible; because these are matrices, there is no subtlety to deal with regarding left or right inverses. 

\subsection{Example 2: $\mathcal S$ch}
\subsubsection{Functor Categories}
Let $Q = \mathcal{S}ch$, and choose $C$ to be the category of finite dimensional vector spaces over $\mathbb C$. Note that we can choose much more exotic contexts, so long as we make sure they are additive categories. 

It is well worth examining how this example changes when $C$ is instead the category of abelian groups, with group homomorphisms as the morphisms, or the category of Banach spaces with linear maps as morphisms, or indeed the category with matrix tuples $M^d$ as objects with classical free functions as the morphisms.

One functor $X\in C^Q$ is the following.

\begin{center}
    \begin{tikzpicture}[node distance=2cm, auto]
        \node (u) {$u$};
        \node (v) [below of = u] {$v$};

        \draw [->] (u) to [bend left] node {$x_{21}$} (v);
        \draw[->] (v) to [bend left] node {$x_{12}$} (u);
        \path[->] (u) edge [loop, out = 45, in = 135, looseness = 4, swap] node[auto] {$x_1$} (u);
        \path[->] (v) edge [loop, out = 225, in = 315, looseness = 4, swap] node[auto] {$x_2$} (v);

        \node (w) [node distance = 3cm, right of = u,] {$\mathbb C^n$};
        \node (t) [below of = w] {$\mathbb C^m$};

        \draw [->] (w) to [bend left] node {$C$} (t);
        \draw[->] (t) to [bend left] node {$B$} (w);
        \path[->] (w) edge [loop, out = 45, in = 135, looseness = 4, swap] node[auto] {$A$} (w);
        \path[->] (t) edge [loop, out = 225, in = 315, looseness = 4,swap ] node[auto] {$D$} (t);

        \draw[->] (u) to node {$X$} (w);
        \draw[->] (v) to node {$X$} (t);
        
    \end{tikzpicture}
\end{center}

All other morphisms in $\mathcal Sch$ are gained by composition of these operators; because they are freely associated, there is no restraint on our choice of operators.

As in the previous example, then, we can represent the entire functor as $X = (X_{11}, X_{22}, X_{12}, X_{22})$; the objects are implicit in the transformations. Here, however, there is some additional subtlety; if $n=m$ then $X(u)\cong X(v)$ and then $X_{11}X_{22}$ is a priori a fine product. However, since this product is generally not defined, it will not show up in a free map. Indeed, if it does then it must be that $X(u) = X(v)$ always, and a more appropriate category $Q$ should be used. 

\subsubsection{nc Subcategories}
A natural automorphism $S$ in this context is given by any pair of invertible transformations $S_u \in GL_n$ and $S_v \in GL_m$. 

Then if $Y \in C^Q$ is similar to $X$, with $\mathbb C^k$ and $\mathbb C^{\ell}$ rather than $\mathbb C^n$ and $\mathbb C^m$, and $X, Y$ both contained in an nc-subcategory $U$ of $C^Q$, the following are included in $U$. 

\[ \begin{matrix}X \oplus X & \hspace{1cm}& S^{-1}XS &\hspace{1cm}& X\oplus Y\\
    \begin{tikzpicture}[node distance=2cm, auto]
         \node (w)  {$\mathbb C^{2n}$};
        \node (t) [below of = w] {$\mathbb C^{2m}$};

        \draw [->] (w) to [bend left] node {$\m{C &\\&C}$} (t);
        \draw[->] (t) to [bend left] node {$\m{B &\\&B}$} (w);
        \path[->] (w) edge [loop, out = 45, in = 135, looseness = 4, swap] node[auto] {$\m{A&\\&A}$} (w);
        \path[->] (t) edge [loop, out = 225, in = 315, looseness = 4, swap] node[auto] {$\m{D&\\&D}$} (t);
    \end{tikzpicture} && 
    
    \begin{tikzpicture}[node distance=2cm, auto]
        \node (w)  {$\mathbb C^n$};
        \node (t) [below of = w] {$\mathbb C^m$};

        \draw [->] (w) to [bend left] node {$S_v\ii CS_u$} (t);
        \draw[->] (t) to [bend left] node {$S_u\ii BS_v$} (w);
        \path[->] (w) edge [loop, out = 45, in = 135, looseness = 4, swap] node[auto] {$S_u\ii AS_u$} (w);
        \path[->] (t) edge [loop, out = 225, in = 315, looseness = 4, swap] node[auto] {$S_v\ii DS_v$} (t);
    \end{tikzpicture}&&
    
     \begin{tikzpicture}[node distance=2cm, auto]
         \node (w)  {$\mathbb C^{n+k}$};
        \node (t) [below of = w] {$\mathbb C^{m+\ell}$};

        \draw [->] (w) to [bend left] node {$\m{C &\\&C'}$} (t);
        \draw[->] (t) to [bend left] node {$\m{B &\\&B'}$} (w);
        \path[->] (w) edge [loop, out = 45, in = 135, looseness = 4, swap] node[auto] {$\m{A&\\&A'}$} (w);
        \path[->] (t) edge [loop, out = 225, in = 315, looseness = 4, swap] node[auto] {$\m{D&\\&D'}$} (t);
    \end{tikzpicture} 
\end{matrix}\]
As well as \begin{itemize}
    \item $Y\oplus X$
    \item $Y\oplus S\ii XS$
    \item $\tilde S\ii (X\oplus Y) \tilde S.$
\end{itemize}

\subsubsection{Free Maps}
Here a natural transformation $\Gamma$ between $X,Y$ consists of a pair of operators $\Gamma_{u}: \mathbb C^k\rightarrow \mathbb C^n $ and $ \Gamma_v: \mathbb C^{\ell}\rightarrow \mathbb C^m*$ such that \[\begin{split}
    A\Gamma_u &= \Gamma_uA' \\B\Gamma_v&=\Gamma_uB'\\C\Gamma_u &= \Gamma_vC' \\D\Gamma_v &= \Gamma_vD'.
\end{split}\]
A monomial now corresponds to a path on $Q$, such as $T_pT_{pq}T_q = X(x_{11}x_{12}x_{22})$; here the natural transformation works as follows, varying from source to target of the operators. \[\begin{split}ABD\Gamma_v &= \\ &= AB\Gamma_vD' \\&= A\Gamma_uB'D' \\&= \Gamma_uA'B'D'\end{split}\]
Polynomials are a sum of such paths. An example of a polynomial $p:C^Q\rightarrow C^Q$ is \[\begin{split}p(x_{11},x_{12}, x_{22}, x_{21}) &=  (x_{11}x_{12}x_{21}+x_{11}, \hspace{3mm} x_{11}x_{12}x_{22}, \hspace{3mm}  x_{21}x_{12}+x_{22}^2, \hspace{3mm} x_{22}^4x_{21}+x_{22}^2x_{21}+x_{22}x_{21}) \end{split}\]
which is evaluated \[p(X) = (X_{1}X_{12}X_{21}+X_{1},  \hspace{3mm}  X_{1}X_{12}X_{2}, \hspace{3mm}  X_{21}X_{12}+X_{2}^2,  \hspace{3mm}  (X_{2}^4+X_{2}^2+X_{2})X_{21}),\] which is pictorially shown as 
\begin{center}
    \begin{tikzpicture}[node distance=2cm, auto]
        \node (u) {$\mathcal H_1$};
        \node (v) [below of = u] {$\mathcal H_2$};

        \draw [->] (u) to [bend left] node {$X_{21}$} (v);
        \draw[->] (v) to [bend left] node {$X_{12}$} (u);
        \path[->] (u) edge [loop above] node[auto] {$X_1$} ();
        \path[->] (v) edge [loop below] node[auto] {$X_2$} ();

        \node (w) [node distance = 5cm, right of = u] {$\mathcal H_1$};
        \node (t) [below of = w] {$\mathcal H_2$};

        \draw [->] (w) to [bend left] node {$(X_2^4+X_2^2+X_2)X_{21}$} (t);
        \draw[->] (t) to [bend left] node {$X_1X_{12}X_2$} (w);
        \path[->] (w) edge [loop above] node[auto] {$X_1X_{12}X_{21}+X_1$} ();
        \path[->] (t) edge [loop below] node[auto] {$X_{21}X_{12}+X_2^2$} ();

        \draw[->] (u) to node {$f$} (w);
        \draw[->] (v) to node {$f$} (t);
        
    \end{tikzpicture}
\end{center}

Note that we are now, as we often will in the future, using a notation that makes it more readily apparent which multiplications are allowed, and the relationship between the linear maps. Similarly, we have left the pure realm of finite complex vector spaces, and are writing as though we have more abstract Hilbert spaces. 

\subsubsection{The Schur Complement \& Principal Pivot Transform}
These are pivotal examples of rational maps. They are given by an inversion map together with a polynomial, with $Sch: C^Q \rightarrow C^{1\circ}$ given by $Sch(x) = x_{11}-x_{12}x_{22}\ii x_{21}$, where $1\circ$ is the free category generated by a loop on $u$ and no other arcs. It can also be considered $Sch: C^Q \rightarrow C^Q$ with the definition $Sch(x) = (x_{11}-x_{12}x_{22}\ii x_{21}, 0, 0, 0)$. 

\begin{center}
    \begin{tikzpicture}[node distance=2cm, auto]
        \node (u) {$\mathcal H_1$};
        \node (v) [below of = u] {$\mathcal H_2$};

        \draw [->] (u) to [bend left] node {$X_{21}$} (v);
        \draw[->] (v) to [bend left] node {$X_{12}$} (u);
        \path[->] (u) edge [loop, out = 45, in = 135, looseness = 4, swap] node[auto] {$X_1$} (u);
        \path[->] (v) edge [loop, out = 225, in = 315, looseness = 4, swap] node[auto] {$X_2$} (v);

        \node (w) [node distance = 5cm, right of = u] {$\mathcal H_1$};
        \node (t) [below of = w] {$\mathcal H_2$};

        \draw [->] (w) to [bend left] node {$0$} (t);
        \draw[->] (t) to [bend left] node {$0$} (w);
        \path[->] (w) edge [loop, out = 45, in = 135, looseness = 4, swap] node[auto] {$X_1-X_{12}X_2^{-1}X_{21}$} (w);
        \path[->] (t) edge [loop, out = 225, in = 315, looseness = 4, swap] node[auto] {$0$} (t);

        \draw[->] (u) to node {$Sch$} (w);
        \draw[->] (v) to node {$Sch$} (t);
        
    \end{tikzpicture}
\end{center}

The principal pivot transform, on the other hand, is the ``more complete" version of the schur complement, with $ppt: C^Q\rightarrow C^Q$ given by
\[ppt_B\M{A & B\\C&D} = \M{A\ii& -A\ii B\\ -CA\ii & D-CA\ii B} = \M{A-BD\ii C&-BD^{-1}\\-B\ii C&B\ii},\]
or pictorially, using the $X$ as above, 
\begin{center}
    \begin{tikzpicture}[node distance=2cm, auto]
        \node (u) {$\mathcal H_1$};
        \node (v) [below of = u] {$\mathcal H_2$};

        \draw [->] (u) to [bend left] node {$X_{21}$} (v);
        \draw[->] (v) to [bend left] node {$X_{12}$} (u);
        \path[->] (u) edge [loop, out = 45, in = 135, looseness = 4, swap] node[auto] {$X_1$} (u);
        \path[->] (v) edge [loop, out = 225, in = 315, looseness = 4, swap] node[auto] {$X_2$} (v);

        \node (w) [node distance = 5cm, right of = u] {$\mathcal H_1$};
        \node (t) [below of = w] {$\mathcal H_2$};

        \draw [->] (w) to [bend left] node {$-X^{-1}_2X_{21}$} (t);
        \draw[->] (t) to [bend left] node {$-X_{12}X_2^{-1}$} (w);
        \path[->] (w) edge [loop, out = 45, in = 135, looseness = 4, swap] node[auto] {$X_1-X_{12}X_2^{-1}X_{21}$} (w);
        \path[->] (t) edge [loop, out = 225, in = 315, looseness = 4, swap] node[auto] {$X_2^{-1}$} (t);

        \draw[->] (u) to node {$ppt_{1}$} (w);
        \draw[->] (v) to node {$ppt_1$} (t);
        
    \end{tikzpicture}
\end{center}

Since $X_2$  (or $B$) is a loop, as long as $X(v)$ is a Banach space then the regularity domain of the principal pivot transform is still simple, with no need to worry about left- or right-ness. 

\subsubsection{Exponential Maps \& the Campbell-Baker-Hausdorff Formula}
The exponential function can of course be applied here; because the exponential requires arbitrary powers, it can only be applied to cyclic paths. Thus $e^{X_{12}X_{21}}$ is well defined, but $e^{X_{12}}$ is not. 

Thus $f:C^Q \rightarrow C^Q$ with \[f\M{x_1&x_{12}\\x_{21}&x_{22}} = \M{e^{x_{12}x_{21}} & x_{12}+x_{12}e^{x_2} \\x_{21}&x_{21}e^{x_1}e^{x_{12}x_{21}}x_{12}}\]
is a free map. 

In general, \[e^{x_1}e^{x_{12}x_{21}} \neq e^{x_1 + x_{12}x_{21}}.\] Equality holds exactly when $x_1$ commutes with $x_{12}x_{21}$. The Campbell-Baker-Hausdorff formula provides the value of this product of exponentials. 

If $e^Z = e^Xe^Y$, then the first several terms of the formula are \[Z= X+Y + \frac{1}{2}[X, Y] + \frac{1}{12}[X,[X,Y]] - \frac{1}{12}[Y, [X,Y]] \dots\]
In the matrix case $[X, Y] = XY-YX$, and so this formula becomes \[\begin{split}
    Z &= X + Y + \frac{1}{2}(XY-YX) \\&+ \frac{1}{12}(XXY-XYX-XYX+YXX) \\&- \frac{1}{2}(YXY-YYX-XYY+YXY) \dots
\end{split}\]
and has (countably many) more linear combinations of such polynomials. This clearly has the form of a free map; when $X, Y$ are small enough that this converges, this is a free map. 

This belongs most naturally to the classical free analysis context, but has utility in our broader context.

\subsection{Example 3: Symmetric Group}
\subsubsection{Functor Categories}
Now let $Q = S_3$, the symmetric group on three elements. Let $C$ again be the category of Banach spaces. 

When finding a representation of $S_3$ we are more restricted, because we require all of our compositions to satisfy the relations in $S_3$. 

As a category, $S_3$ has one object, so we choose any Banach space $A$ to represent that object. 

Then each two cycle in $S_3$ must be sent by $X$ to some $x\in B(A)$ satisfying $x^2=id$. Thus we choose $(12)\mapsto x, (13)\mapsto y, (23)\mapsto z$ each its own inverse. But because $(123) = (23)(12) =(13)(23)=(12)(13)$, we require $X(123) = zx=yz=xy$, and similarly for $X(132)$. Thus there is a great deal of restriction placed. 

Indeed, if $A$ is chosen to be a finite dimensional vector space, then what we have done here is find a standard sort of matrix representation of $S_3$. 

\subsubsection{nc Subcategories}
A natural automorphism $S$ here is given by an invertible element $S \in B(A)$. 

If the functors $Y, X$ are both in the nc-subcategory $U$ of $C^Q$, then so are the following. 
\begin{itemize}
    \item $S\ii XS$ 
    \begin{itemize}
        \item Object: $A$
    \end{itemize}
    \begin{itemize}
        \item   Morphisms: $S\ii XS(12) = S\ii xS$, and similarly for the others. Thus $s\ii XS(123) = S\ii xyS = S\ii yzS= S\ii zx S$.
    \end{itemize}
    \item $X\oplus Y$
    \begin{itemize}
        \item Object: $X \oplus Y$ 
        \item Morphisms: $(12)\mapsto x \oplus x_Y = \m{x &\\& x_Y}$, and similarly for the others. 
        \item Thus $(123) \mapsto (z\oplus Y(23))(x\oplus X_Y) = (zx \oplus z_Yx_Y) = (yz \oplus y_Yz_Y)) = (xy \oplus x_Yy_Y = \m{xy &\\&x_Yy_Y}.$
    \end{itemize}
\end{itemize}
Because of the multiplicative properties of direct summation, we run into no issues here; all of those were sorted out in the original choice of the functors $X, Y$. 

\subsubsection{Free Maps}
Here, the free maps are much more restricted, at least when mapping group representations to group representations. 

One simple example is $f: C^{S_3} \rightarrow C^{S_2}$ given by $f(x,y,z) = x$. Of course, the constant function $f(x,y,z) = 1$ is also a free map. 

Similarly, there is a monomial maps $f: C^{S_3}\rightarrow C^{C_3}$ (where $C_3$ is the cyclic group of order 3) sending $g \mapsto xy$. Indeed, as an immediate consequence of Cayley's theorem [A], every finite group $G$ gives us an $n$ resulting in a free monomial map $f: C^{S_n}\rightarrow C^{G}$. 

We can also consider maps $f: C^{S_3} \rightarrow C^{A_3}$, where $A_3$ is the alternating group on three elements. The identity must be mapped to the identity, and the other two even elements can can be mapped either way. Indeed, any monomial is simply a homomorphism of group representations. 

Polynomials, however, must bridge contexts, as few polynomials will still be group representations. They can certainly be defined as free maps $f: C^{S_3} \rightarrow C^Q$ where $Q$ is a free category with one vertex. This is very similar to simply choosing an inclusion map $C^{S_3} \xhookrightarrow{} C^R$ where $R$ has three arcs, and then considering $f: C^R \rightarrow C^Q$. 

If $f$ is a polynomial, then $f(C^{S_3})$ at each arc of $Q$ has an element of the of the symmetric group algebra $k[S_3],$ for some appropriate commutative ring $k$ and some representation of $S_3$ over $C$. The symmetric group algebra $k[S_3]$ is extensively studied, such as in [S], [Sym]. The detail of these connections to the theory of representations is beyond the scope of this paper, but seems likely to be fruitful. 

\section{An Inverse Function Theorem}
\subsection{Differentiation}

\begin{definition}[convention]
    Choose a function $V: ob(Q) \rightarrow ob(C)$. We define $C^Q_V$ to be the subset of $C^Q$ where $u \mapsto V(u)$ for each $u\in ob(Q)$. 
    
    By the matrix \[A = \M{A_{11} & \dots & A_{1n}\\\vdots && \vdots\\A_{n1}&\dots &A_{nn}},\] with $A_{ij} \in ob(C^{Q}_V)$, we mean the functor that for each $x\in mor Q$ has

    \[A(x) = \M{A_{11}(x) & \dots & A_{1n}(x)\\\vdots && \vdots\\A_{n1}(x)&\dots &A_{nn}(x)}\]
    
   and has $A(u) = \bigoplus ^n V(u)$. This naturally maps $\bigoplus^nV(u) \rightarrow \bigoplus^n V(v)$ if $x: u\rightarrow v$. 
   \vspace{5mm}

    Similarly, let $X,Y \in ob(C^{Q})$ and fix a natural transformation $\Gamma: Y\rightarrow X$.
    
    Then $\m{1 & \Gamma \\& 1}:X\oplus Y\rightarrow X\oplus Y$ (typically notated $\m{1&\Gamma\\&1}: \m{X\\Y}\rightarrow \m{X\\Y}$ in a function space) is taken to mean the natural transformation formed of morphisms including the objects identity morphisms on the diagonal, and the natural transformation $\Gamma$ on the super-diagonal block. 
\end{definition}

With this convention in place, we can now discuss derivatives. 
\begin{definition}
    If $f: U \rightarrow W$ is a free map and $X\in U \subset C^Q_V$, $H\in C^Q_V$, we define the \textbf{directional derivative} $Df(X)[H]$ by the relation \[f\M{X & H \\ & X} = \M{f(X) & Df(X)[H]\\& f(X)}.\]
\end{definition}

It has been shown, such as in [KVV] and [IFT], that $Df(X)[H]$ is the classical derivative in the classical free analysis case. The same method extends to show that the it is still the classical derivative when $Q$ is a general free category, as will be shown here.

\begin{proposition}
  Let $C$ be a category of finite dimensional vector spaces. Let $C^Q, C^R$ be equipped with metrics induced by $C$, and $U \subset C^Q$ be open. 
  
  Let the free map $f: U\rightarrow V$ be continuous. Then $Df(X)[H]$ is the standard derivative, and thus $f$ is analytic. 
\end{proposition}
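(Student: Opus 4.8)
The plan is to lift the classical result recalled in Section~2 (continuity of a free function implies analyticity, attributed to [KVV]) essentially verbatim, using the characterization theorem that a free map is precisely a functor respecting direct sums and natural automorphisms. Because $Q$ is a free category, a functor $X\in C^Q_V$ is determined by its values $X(a)\in Hom(V(s(a)),V(t(a)))$ on arcs, and a direction $H$ is likewise a free choice of such morphisms on arcs; thus $C^Q_V$ is an open subset of the finite-dimensional normed space $\bigoplus_{a\in Arcs(Q)}Hom(V(s(a)),V(t(a)))$, on which the metric induced by $C$, continuity, and the Fréchet and analytic notions all make sense. Every block functor below is defined on arcs and extended by functoriality, which is consistent precisely because a free category imposes no relations to violate.

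First I would set up the first-order difference operator. For $X,Y\in C^Q_V$ and a direction $H$, the functor $\M{X & H \\ 0 & Y}$ --- defined on arcs and extended functorially, so that its upper-right entry on a path $w$ is the free difference quotient $\sum_{w=uav}X(u)H(a)Y(v)$ --- carries the intertwiners $\m{I\\0}\colon X\to\M{X & H \\ 0 & Y}$ and $\m{0 & I}$. Freeness then forces the block-triangular value $\M{f(X) & \Delta f(X,Y)[H] \\ 0 & f(Y)}$, which defines $\Delta f$, and the case $Y=X$ recovers $Df(X)[H]$. Homogeneity $Df(X)[tH]=t\,Df(X)[H]$ comes from conjugating by the scalar natural automorphism $\mathrm{diag}(I,tI)$ (scalars are central), and additivity from the usual $4\times 4$ direct-sum-and-intertwiner collapse; both are pure direct-sum/automorphism manipulations and so lift by the characterization theorem. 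Hence $Df(X)[\cdot]$ is linear, and it is continuous in $X$ since it is a block of $f$ at a continuously varying functor.

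The analytic core is the similarity identity: the functor $\M{X+H & H \\ 0 & X}$ is conjugate, via the natural automorphism $\M{I & I \\ 0 & I}$, to the direct sum $(X+H)\oplus X$. Applying $f$ and reading the upper-right block gives $\Delta f(X+H,X)[H]=f(X+H)-f(X)$, whence
\[f(X+H)-f(X)-Df(X)[H]=\bigl(\Delta f(X+H,X)-\Delta f(X,X)\bigr)[H].\]
Now continuity enters: $\Delta f$ is a block of the continuous $f$, so in finite dimensions the bracketed linear maps converge in operator norm as $H\to 0$, making the right-hand side $o(\|H\|)$; this upgrades the directional derivative to the genuine Fréchet derivative. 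Iterating with $(k{+}1)\times(k{+}1)$ bidiagonal block functors yields all higher difference quotients, which exist and are locally bounded by continuity, and a locally bounded Gateaux-analytic map on an open subset of a finite-dimensional space is analytic, giving the final claim.

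The step I expect to be the main obstacle is not the algebra, which lifts mechanically from the characterization theorem, but the control of the remainder: showing that mere continuity --- not assumed differentiability --- forces $\Delta f(X+H,X)\to Df(X)$ uniformly enough that the first-order remainder is $o(\|H\|)$, and then that the higher difference quotients assemble into a convergent Taylor series. Carrying this out demands verifying that each block functor $\M{X & H \\ 0 & Y}$ and its higher analogues genuinely lands in the open nc subcategory $U$ for small $H$ (so that $f$ may be evaluated there), and that the metric induced by $C$ really equips $C^Q_V$ with the finite-dimensional normed structure the local-boundedness-implies-analyticity argument requires. Each such verification is routine, but it is precisely here that the categorial bookkeeping must be done with care.
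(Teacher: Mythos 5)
Your proposal is correct and takes essentially the same approach as the paper: both rest on the key conjugation lemma that a block upper-triangular functor with upper-right entry $X\Gamma-\Gamma Y$ is naturally automorphic to $X\oplus Y$ via $\m{1&\Gamma\\&1}$, so freeness forces the corresponding block of $f$ to be $f(X)\Gamma-\Gamma f(Y)$. The only difference is the finish --- the paper instantiates the lemma with $\Gamma=\frac{1}{z}I$ so the upper-right block is literally the difference quotient $\frac{f(X+zH)-f(X)}{z}$ and then lets $z\to 0$ by continuity, whereas you take $\Gamma=I$ and upgrade to the Fr\'echet derivative via operator-norm convergence of your $\Delta f$ --- and your handling of that step and of the higher-order analyticity argument is in fact more detailed than the paper's one-sentence conclusion.
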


First we prove a simple lemma which will come in handy. 
\begin{lemma}
If the morphisms in the category $\mathcal X \in C^{Q}$ are of the form $x = \m{X & X\Gamma - \Gamma Y\\& Y}$ then \[f(x) = \M{f(X) & f(X)\Gamma - \Gamma f(Y)\\& f(Y)}\]
\end{lemma}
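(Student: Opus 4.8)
The plan is to realize the ``defect'' upper-triangular functor as a conjugate of the honest direct sum $X\oplus Y$, and then push everything through $f$ using only the two properties already available: that $f$ respects natural transformations and, by the characterization theorem above, that it respects direct sums. Write $\mathcal X$ for the functor with $\mathcal X(x)=\m{X(x) & X(x)\Gamma-\Gamma Y(x)\\&Y(x)}$, and let $S$ be the transformation whose component at each object $u$ is $S_u=\m{1 & \Gamma_u\\&1}$, with inverse $\m{1 & -\Gamma_u\\&1}$. The heart of the matter is the identity $\mathcal X = S\ii(X\oplus Y)S$: the off-diagonal block $X\Gamma-\Gamma Y$ is precisely the ``coboundary'' produced by this conjugation.

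First I would verify that $\mathcal X$ is genuinely a functor. For composable $x\colon u\to v$ and $x'\colon v\to w$ the telescoping computation $X(x')\bigl(X(x)\Gamma_u-\Gamma_v Y(x)\bigr)+\bigl(X(x')\Gamma_v-\Gamma_w Y(x')\bigr)Y(x)=X(x'x)\Gamma_u-\Gamma_w Y(x'x)$ shows that the off-diagonal blocks compose correctly, so $\mathcal X$ respects composition and is a legitimate object of the functor category.

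The main obstacle --- and the one genuinely new point relative to the classical matrix proof --- is the claim that $S$ is a bona fide natural \emph{iso}morphism $\mathcal X\to X\oplus Y$, \emph{even though} $\Gamma$ need not be a natural transformation from $Y$ to $X$. In the classical one-object setting any invertible $S$ is automatically a legal conjugator, but here the naturality square $S_v\,\mathcal X(x)=(X\oplus Y)(x)\,S_u$ must be checked by hand. The point is that $\Gamma$ failing to be natural is exactly what makes the defect $X\Gamma-\Gamma Y$ nonzero, and $\mathcal X$ is built so that this failure cancels: computing $S_v\,\mathcal X(x)$ collapses the off-diagonal entry to $X(x)\Gamma_u$, which is precisely the off-diagonal entry of $(X\oplus Y)(x)\,S_u$. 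Since each $S_u$ is invertible, $S$ is therefore a natural automorphism, and conjugation by $S$ is a legitimate operation inside the nc-subcategory.

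With that in hand the argument closes mechanically. Because $f$ respects natural transformations, applying it to the natural isomorphism $S\colon\mathcal X\to X\oplus Y$ shows $S$ is again natural between $f(\mathcal X)$ and $f(X\oplus Y)$, so $f(\mathcal X)=S\ii\,f(X\oplus Y)\,S$. The characterization theorem gives $f(X\oplus Y)=f(X)\oplus f(Y)$, and the routine block computation $S\ii\,(f(X)\oplus f(Y))\,S$ reproduces the diagonal $f(X),f(Y)$ together with off-diagonal $f(X)\Gamma-\Gamma f(Y)$, which is exactly the asserted form. As a sanity check on the corners, one can alternatively note that the inclusion $\m{1\\0}\colon X\to\mathcal X$ and projection $\m{0&1}\colon\mathcal X\to Y$ are natural transformations preserved by $f$, forcing $f(\mathcal X)$ to be upper triangular with the correct diagonal; the conjugation argument is what additionally pins down the off-diagonal entry.
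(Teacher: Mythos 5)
Your proof is correct and takes essentially the same approach as the paper's: both realize $\mathcal X$ as the conjugate $S\ii(X\oplus Y)S$ with $S = \m{1 & \Gamma\\ & 1}$ and push this through $f$ using that $f$ respects natural automorphisms and direct sums. Your extra verifications (functoriality of $\mathcal X$ and the naturality square for $S$) are details the paper leaves implicit, but the core argument is identical.
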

\begin{proof}
    Observe that $x$ is naturally automorphic to $\tilde x$ where $\tilde x = \m{X &\\&Y}$ by way of the natural transformation $s= \m{1 & \Gamma\\& 1}$. Evaluating $f(x) = s^{-1}f(\tilde x)s$ establishes the lemma.
\end{proof}

\begin{proof}[Proof of proposition]
    Choose $X \in U$. Then of course $X \oplus X\in U$. Because $U$ is open in a metric space, then we can choose sufficiently small $H$ so that $\m{X & H\\&X} \in U$. Similarly, we can choose a scalar $z$ small enough that \[Z(z) = \M{X +zH & H\\& X} \in U.\]

    If we choose a natural transformation $\Gamma = \frac{1}{z} I$, then we have that in fact \[Z(z) = \M{X+zH & (X +zH)\Gamma - \Gamma X \\& X}.\]
    Our lemma then tells us that \[f(Z(z)) = \M{f(X+zH) & \frac{f(X+zH)-f(X)}{z} \\& f(X)}.\]
    Since $f$ is continuous, in finite dimension this is exactly the standard derivative. 
\end{proof}

We demonstrate a chain rule relationship inherent in this definition; a Liebniz rule will be proven in \ref{product} after an appropriate product is defined. \label{liebniz}

Consider $(f\circ g)(X)$. Then $D(f\circ g)(X)[H]$ is given by evaluating \[(f\circ g)\m{X&H\\&X}=f\m{g(X)&Dg(X)[H]\\&g(X)}=\m{(f\circ g)(X) & Df(g(X))[Dg(X)[H]]\\& (f\circ g)(X)}\]
which give $D(f\circ g)(X)[H]=Df(g(X))[Dg(X)[H]]$, which looks much like the classical chain rule. 

It is no surprise that the free category case is very nice here as well; the following lemma shows that the derivative being zero is fully encoded in the generating arcs. This will greatly simplify our analysis going forward. 

\begin{proposition}
    If $R$ is a free category generated by arcs $x_1, \dots, x_n$ and $f: C^Q\rightarrow C^R$ is a free map, 
    
    then $Df(X)[H]=0$ if and only if $Df(X)[H](x_i)=0$ for all $i\in [n]$. 
\end{proposition}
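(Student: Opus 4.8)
The plan is to lean on the single structural fact that $f\M{X & H\\&X}$ is an honest functor in $C^R$, and then to use that $R$ is free. The forward implication needs no work: the generating arcs $x_i$ are in particular morphisms of $R$, so if $Df(X)[H]=0$ (understood as vanishing of the super-diagonal block on every morphism of $R$) then certainly $Df(X)[H](x_i)=0$ for each $i\in[n]$.

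For the converse I would first record the block structure. By the definition of the derivative, for every morphism $w$ of $R$,
\[f\M{X&H\\&X}(w) = \M{f(X)(w) & Df(X)[H](w)\\& f(X)(w)},\]
an upper-triangular $2\times 2$ block with $f(X)(w)$ on the diagonal and $Df(X)[H](w)$ on the super-diagonal. Since $f\M{X&H\\&X}$ is a functor it sends the empty path $1_u$ to the identity, whose super-diagonal block is $0$, so $Df(X)[H](1_u)=0$ holds automatically. I would then establish a Leibniz expansion by induction on path length: because $R$ is free, every nonidentity morphism is a path $w=x_{i_1}\cdots x_{i_k}$, and functoriality gives $f\M{X&H\\&X}(w)=\prod_j f\M{X&H\\&X}(x_{i_j})$. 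Multiplying the upper-triangular blocks, the diagonal is the product of the $f(X)(x_{i_j})$ and the super-diagonal is
\[Df(X)[H](w) = \sum_{j=1}^{k} f(X)(x_{i_1})\cdots f(X)(x_{i_{j-1}})\, Df(X)[H](x_{i_j})\, f(X)(x_{i_{j+1}})\cdots f(X)(x_{i_k}),\]
which is exactly the derivation rule for products of upper-triangular matrices; the induction step just appends one more factor.

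The conclusion is then immediate: assuming $Df(X)[H](x_i)=0$ for every generating arc, each summand above contains a vanishing factor $Df(X)[H](x_{i_j})=0$, so $Df(X)[H](w)=0$ for every path $w$, and together with the identity case this forces $Df(X)[H]=0$ on all of $mor(R)$. I expect no serious obstacle here; the content is simply that freeness reduces infinitely many vanishing conditions to the finitely many generating ones. The only points to keep straight are the convention that ``$Df(X)[H]=0$'' means vanishing on every morphism of $R$ rather than merely on objects, and the ordering of factors when expanding the product, since the free-category convention for composing morphisms runs opposite to the concatenation of paths in the underlying quiver.
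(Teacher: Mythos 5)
Your proof is correct and follows essentially the same route as the paper's: both directions rest on functoriality of $f\bigl(\begin{smallmatrix}X & H\\ & X\end{smallmatrix}\bigr)$ plus freeness of $R$, so that every morphism is a product of generator images and the block structure propagates. Your explicit Leibniz expansion is just a more detailed rendering of the paper's observation that products of block-diagonal morphisms remain block-diagonal once each generator's super-diagonal block vanishes.
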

\begin{proof}
    If $Df(X)[H](x_i)\neq 0$ for some $i$, then by definition $Df(X)[H] \neq 0$.  
    
    Conversely, if $Df(X)[H](x_i) = 0$ for all $x_i$, then $f\m{X&H\\&X}(x_i)=\m{f(X)(x_i)&\\&f(X)(x_i)}$; since every other morphism is a composition of these generators, the structure will remain block-diagonal and thus we see that $Df(X)[H]=0$.
\end{proof}

\subsection{Inverse Function Theorem}

Now, let us turn our attention to the inverse function theorem. We begin with a somewhat ambiguous structural lemma. In its ``general" form it requires a topological structure, specifically that of a metric space; this is easily come by when $C$ is a category of matrices, Hilbert spaces, Banach spaces, or a similar structure, though it is unnatural in some cases (such as the category of abelian groups). 

The first statement, however, is more often true, and is more crucial to the overall paper. 

\begin{lemma}
    Suppose $U \subset C^{Q}$ is an nc subcategory, and $f: U \rightarrow C^{R}$ is a free map. Also suppose that for $X, Y \in U$ we have \[f(X)\Gamma = \Gamma f(Y).\]
        $(1)$ If $f$ is injective on objects, then \[X \Gamma = \Gamma Y.\]
    
        $(2)$ Additionally, suppose that  $C^Q$ has a metric space structure and $U$ is open.
    If the preimage $f^{-1}\left(f(X) \oplus f(Y)\right)$ has compact closure in $U$ we have, 
    \[X\Gamma = \Gamma Y.\]
\end{lemma}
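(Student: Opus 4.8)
The plan is to prove both directions of a correspondence between intertwiners of $f(X), f(Y)$ and intertwiners of $X, Y$. The forward direction (intertwining before applying $f$ implies intertwining after) is just the definition of a free map, so the content is entirely in the converse. The key idea is the same trick that underlies the derivative lemma: encode the map $\Gamma$ inside a single functor on a direct sum and exploit that $f$ is a functor that does not alter objects.

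\begin{proof}[Proof plan for statement (1)]
First I would form the ``intertwining functor'' $Z \in C^{Q}$ on the object $X \oplus Y$ whose morphisms are the block-upper-triangular matrices $Z(a) = \bigl(\begin{smallmatrix} X(a) & 0 \\ 0 & Y(a)\end{smallmatrix}\bigr)$; but the more useful observation is to study $f(X \oplus Y) = f(X) \oplus f(Y)$, which holds since $f$ respects direct sums by the characterization theorem. Now consider the natural transformation $\Sigma = \bigl(\begin{smallmatrix} 1 & \Gamma \\ 0 & 1 \end{smallmatrix}\bigr)$ acting on $f(X) \oplus f(Y)$: the hypothesis $f(X)\Gamma = \Gamma f(Y)$ is exactly the statement that $\Sigma$ is a natural automorphism of $f(X)\oplus f(Y)$, that is, $\Sigma^{-1}\,(f(X)\oplus f(Y))\,\Sigma = f(X)\oplus f(Y)$ block-diagonally. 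The crucial point is that because $f$ is injective on objects, the functor $W := \Sigma^{-1}(X\oplus Y)\Sigma$ lives in the same object-fiber as $X\oplus Y$, and I can ask what $f$ does to it.
\end{proof}

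\textbf{The core of the argument.} Since $f$ commutes with natural automorphisms (again by the characterization theorem), $f(W) = f(\Sigma^{-1}(X\oplus Y)\Sigma) = \Sigma^{-1} f(X\oplus Y)\,\Sigma = \Sigma^{-1}(f(X)\oplus f(Y))\Sigma = f(X)\oplus f(Y)$, where the last equality is precisely our hypothesis repackaged. By the lemma preceding the proposition (the one computing $f$ on $\bigl(\begin{smallmatrix} X & X\Gamma - \Gamma Y \\ 0 & Y\end{smallmatrix}\bigr)$), evaluating $f$ on $W$, whose off-diagonal block at each arc $a$ is $X(a)\Gamma - \Gamma Y(a)$, yields a functor whose off-diagonal block is $f(X)\Gamma - \Gamma f(Y) = 0$. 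So $f(W)$ is block-diagonal. The task is then to conclude that $W$ itself was already block-diagonal, i.e.\ that $X(a)\Gamma - \Gamma Y(a) = 0$ for every arc $a$, which is exactly $X\Gamma = \Gamma Y$.

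\textbf{Where the obstacle lies, and statement (2).} The hard part is precisely this last inference: $f(W)$ block-diagonal does not immediately force $W$ block-diagonal, because $f$ need not be injective on the whole category. This is where injectivity on objects in part (1) does the lighter lifting, while part (2) supplies the genuine rigidity via a topological/degree argument. For (2), I would exploit that $f^{-1}(f(X)\oplus f(Y))$ has compact closure in the open set $U$: I would consider the one-parameter family $W_t$ obtained by scaling the off-diagonal block $X(a)\Gamma - \Gamma Y(a)$ by $t$, note each $W_t$ maps under $f$ into the same fiber $f(X)\oplus f(Y)$ (using the upper-triangular automorphism $\bigl(\begin{smallmatrix} 1 & t\Gamma \\ 0 & 1\end{smallmatrix}\bigr)$), and argue that this entire ray lies in the compact preimage; a curve forced into a compact set while its off-diagonal block grows linearly in $t$ can only be constant, forcing the off-diagonal block to vanish. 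The delicate point to get right is ensuring the scaled family genuinely stays in $U$ and in the preimage for all $t$, so that compactness can be brought to bear — this boundedness-versus-unboundedness tension is the main thing the compactness hypothesis is there to resolve.
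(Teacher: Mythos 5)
Your proposal is correct and takes essentially the same approach as the paper: your $W=\Sigma^{-1}(X\oplus Y)\Sigma$ is exactly the paper's functor $z=\bigl(\begin{smallmatrix} X & X\Gamma-\Gamma Y\\ & Y\end{smallmatrix}\bigr)$ (built by the same conjugation trick as the paper's preceding lemma), injectivity on objects then yields $W=X\oplus Y$ and hence $X\Gamma=\Gamma Y$, and your scaled family $W_t$ is the paper's line $z(t)$, whose constant image under $f$ plus compact closure of the preimage forces the off-diagonal direction to vanish. The delicate point you flag --- keeping the whole family inside $U$ so compactness applies --- is glossed over in the paper's own proof as well, so your argument matches it in both substance and level of rigor.
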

\begin{proof}
    First we prove $(1)$. Consider, as in our lemma, a category with morphisms $z = \m{X & X\Gamma-\Gamma Y\\ & Y}$.
    Then $f(z) = \m{f(X) & f(X)\Gamma - \Gamma f(Y) \\ & f(Y)} = \m{f(X) & \\ & f(Y)}$. Since $f$ is injective, this implies that $X\Gamma = \Gamma Y$.  

   $(2)$  If $U$ is open in a metric space over $k$, then for sufficiently small $t \in k$ we have $z(t) = \m{X & t(X\Gamma - \Gamma Y) \\& Y} \in U$. 
    But \[f(z(t)) = \m{f(X) & t(f(X)\Gamma - \Gamma f(Y)) \\ & f(Y)} = \m{f(X) & \\ & f(Y)} = f(z(0)).\]
Thus $\{z(t): t\in k\} \subset f\ii (z(0))$. Since the closure is compact, then, it must be a single point.
\end{proof}

Recall that we say $X=0$ only when it has zero in every component; it is no problem for $X$ to have exactly 1 non-zero entry. Thus we see that this is not a partial derivative in some sense, but rather a whole (directional) derivative. 

With these lemmata established, we can finally present the inverse function theorem.

\begin{theorem}[Inverse function theorem]
    Let $C$ be an additive category. Let $f:U\subset C^{Q} \rightarrow C^{R}$ be a free map. Then the following are equivalent. 
    \begin{itemize}
        \item For every $X \in C^{Q}$, the derivative $Df(X)[H] = 0$ only when $H = 0$.
        \item The map $f$ is injective.
        \item the inverse map $f^{-1}: f(U) \rightarrow U$ exists and is a free map. 
    \end{itemize} 
\end{theorem}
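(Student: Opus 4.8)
The plan is to prove the three conditions equivalent by establishing $(\mathrm{ii})\Rightarrow(\mathrm{i})$, $(\mathrm{i})\Rightarrow(\mathrm{ii})$, $(\mathrm{ii})\Rightarrow(\mathrm{iii})$ and $(\mathrm{iii})\Rightarrow(\mathrm{ii})$, where I label the bullets $(\mathrm{i})$ derivative nonsingularity, $(\mathrm{ii})$ injectivity, and $(\mathrm{iii})$ existence of a free inverse. Three earlier facts do the heavy lifting: a free map never changes objects, so $X$ and $f(X)$ carry the same object assignment; the image of an nc subcategory under a free map is again an nc subcategory; and the structural lemma, which for an injective $f$ turns $f(X)\Gamma=\Gamma f(Y)$ into $X\Gamma=\Gamma Y$.

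I would first clear the cheap implications. For $(\mathrm{ii})\Rightarrow(\mathrm{i})$, if $Df(X)[H]=0$ then $f\m{X&H\\&X}=\m{f(X)&\\&f(X)}=f\m{X&\\&X}$, so injectivity forces $H=0$. The implication $(\mathrm{iii})\Rightarrow(\mathrm{ii})$ is immediate, a map with a left inverse being injective. For $(\mathrm{ii})\Rightarrow(\mathrm{iii})$, injectivity gives the set inverse $f^{-1}\colon f(U)\to U$ on the nc subcategory $f(U)$; it preserves objects automatically, and it respects natural transformations because, given $A=f(X)$ and $B=f(Y)$ with $\Gamma A=B\Gamma$, I rewrite this as $f(Y)\Gamma=\Gamma f(X)$ and invoke the structural lemma to get $Y\Gamma=\Gamma X$, i.e. $\Gamma f^{-1}(A)=f^{-1}(B)\Gamma$. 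Hence $f^{-1}$ is free.

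The substance is $(\mathrm{i})\Rightarrow(\mathrm{ii})$, which must be purely algebraic since $C$ carries no topology; I would argue contrapositively. Suppose $f(X)=f(Y)$ with $X\neq Y$. Because $f$ fixes objects, $X$ and $Y$ agree on objects, so the arcwise difference is meaningful and $Z(a)=\m{X(a)&X(a)-Y(a)\\&Y(a)}$ defines a genuine functor that is naturally isomorphic to $X\oplus Y$ through $s=\m{1&1\\&1}$; since $f$ respects this isomorphism, $f(Z)=s^{-1}\bigl(f(X)\oplus f(Y)\bigr)s=\m{f(X)&f(X)-f(Y)\\&f(Y)}$. Now set $W=X\oplus Y$ and $K=\m{0&X-Y\\&0}$ and feed the honest thickening $\m{W&K\\&W}$ to $f$: a permutation natural isomorphism identifies $\m{W&K\\&W}$ with $Z\oplus Y\oplus X$, and reading the off-diagonal block back through the definition of $Df(W)[K]$ yields $Df(W)[K]=\m{0&f(X)-f(Y)\\&0}=0$, while $K\neq0$ precisely because $X\neq Y$. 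This degenerate derivative contradicts $(\mathrm{i})$, closing the cycle.

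The main obstacle is exactly this last implication, and in particular resisting the circular temptation to read injectivity straight off the structural lemma: that lemma presupposes injectivity, and the reverse intertwining genuinely fails without it, as $x\mapsto x^2$ (whose derivative degenerates at $0$) shows. The decisive move is to convert the global identity $f(X)=f(Y)$ into an infinitesimal one, exhibiting the nonzero direction $X-Y$ as a null vector of the derivative at $X\oplus Y$; this manufactures a singular derivative from any failure of injectivity without interpolation, analysis, or compactness. The remaining work, namely checking that $Z$, the thickening, and its permuted form all lie in $U$ and that $f$ commutes with the permutation and with $s$, is routine given the characterization of free maps by direct sums and natural automorphisms.
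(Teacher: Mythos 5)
Your proposal is correct and takes essentially the same approach as the paper: the easy implications are handled identically, the freeness of $f^{-1}$ is obtained from the same structural lemma, and your ``$Z$ plus permutation'' construction for derivative-nonsingularity $\Rightarrow$ injectivity is exactly the paper's $X\oplus Y\oplus X\oplus Y$ functor with corner $x-y$, conjugated to block-diagonal form so that $Df(X\oplus Y)[K]=0$ forces $X=Y$. If anything, your closing step (invoking the derivative hypothesis on the nonzero direction $K$) is worded more cleanly than the corresponding sentence in the paper's proof.
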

Note that saying $f$ is injective is the same as saying the functor $f$ is injective on objects. 
\begin{proof}
    Suppose that $Df(X)[H] = 0$ for some $H\neq 0$.  Then for $X \oplus X$, we have \[f\M{X & H\\ & X}= \M{f(X) & Df(X)[H]\\ & f(X)} = \M{f(X) & \\ & f(X)} = f\M{X &\\&X}.\] 
    Thus $f$ is not injective. \vspace{5mm}

    Now consider $Df(X)[H]$ to be zero only when $H = 0$. Now consider $X, Y$ so that $f(X) = f(Y)$. 
    If we consider $X\oplus Y\oplus X\oplus Y$, we can choose morphisms in it of the form \[\M{x &&& x-y\\&y&&\\&&x&\\&&&y} = \M{1 &&& -1\\&1&&\\&&1&\\&&&1} \M{x &&&\\&y&&\\&&x&\\&&&y}\M{1 &&& 1\\&1&&\\&&1&\\&&&1}\] where the $1$s are the identity morphism on the appropriate objects in $X$, $Y$. The morphisms $x, y$ must have the same target and source objects, because of the initial equation. 
    Evaluating $f$ on this, we see (due to the initial equation) that \[f\M{x &&& x-y\\&y&&\\&&x&\\&&&y} = \M{f(x) &&& \\&f(y)&&\\&&f(x)&\\&&&f(y)} = f\M{x &&&\\&y&&\\&&x&\\&&&y}\]
Because $f$ is injective, $x-y = 0$. 
Thus we have shown the equivalence of the first two statements. \vspace{5mm}

If the inverse map exists, it is clear that $f$ must be injective. 

Thus we assume that $f$ is injective. Since $f$ also fixes natural transformations, $f^{-1}$ is a functor; our lemma shows that in particular $f^{-1}$ is a free map. Indeed, if we have $W, V \in f(U)$ satisfying $W\Gamma = \Gamma V$, we have $f^{-1}(W)\Gamma = \Gamma f^{-1}(V)$. 
\end{proof}

This proof is basically the same as in [IFT], following all the same contours while ``under the hood" running on a more complicated framework. This demonstrates the principle that \textbf{natural proofs lift naturally}. With the appropriate sort of type checking, we get a much broader theorem from the same method of proof. 

An analogue to the weak Jacobian theorem as in [IFT] seems to follow in the case where $C$ is the category of finite dimensional vector spaces over $\mathbb C$, likely by blowing up the matrices with zeros in order to establish an analogue of a theorem in [KVV]. However, the golden goose here is a theorem that does not rely on representations over $\mathbb C^n$, so no more will be said on the matter here. 

\subsection{Proper Analytic Free Maps}
We now take a step back, and make use of the latter part of our earlier lemma, that assuming the existence of some metric on our additive category. Equipped with this metric space, we are able to extend the main theorem in [Prop]. 

The lifting of this theorem is a good example of how the extended free analytic theory can be restricted to prove powerful theorems in better understood spaces, as well as being another natural lifting of a proof. 

Thus, we will assume for simplicity that $C$ is some category of normed vector spaces. 

\begin{definition}
    We say that a function is \textbf{proper} when the pre-image of a compact set is compact. 

    We further say that a free map $f$ is \textbf{proper} if it is proper for each $C^Q_V$. 

    For $V, U \subset C^Q$ are free nc subcategories, with $C$ the category of finite dimensional vector spaces over the complex numbers, a function $f: U \rightarrow V$ is a \textbf{bianalytic free map} if $f^{-1}$ exists and both $f$ and $f^{-1}$ are proper analytic free maps. 
    
    Note that analytic here has no special sense beyond the normal, as we are in finite dimensional complex space. 
\end{definition}

\begin{theorem}
    Let $U \subset C^Q$ and $W \subset C^R$ be open nc subcategories. Let $f: U \rightarrow W$ be a free map. 
    \begin{enumerate}
        \item If $f$ is proper, it is injective and $f^{-1}: f(U) \rightarrow U$ is a free map.

        \item Suppose that for each $V: ob(Q)\rightarrow ob(C)$ and $Z\in C^Q_V$ the set $f^{-1}(Z) \subset C^R_V$ has compact closure in $U$.

        Then $f$ is injective and $f^{-1}:f(U)\rightarrow U$ is a free map.

        \item Suppose $Q = R$ and $C$ is a category of finite-dimensional vector spaces; suppose that $U, W$ both contain the all-zeroes functor.
        Then if $f$ is proper and continuous, $f$ is bianalytic.
\end{enumerate}
\end{theorem}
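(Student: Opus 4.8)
The plan is to treat parts (1) and (2) as essentially one argument resting on the structural lemma, and to build part (3) on top of them together with the analyticity proposition and the inverse function theorem. Observe first that (1) is a special case of (2): if $f$ is proper then the preimage of any single functor $Z$ is compact, hence has compact closure in $U$, which is precisely the hypothesis of (2). So I would prove (2) and then quote it for (1).

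For (2) I would establish injectivity as follows. Suppose $X, Y \in U$ with $f(X) = f(Y)$. Since $f$ preserves objects, $X$ and $Y$ have the same object assignment, so the identity natural transformation $\mathrm{id}$ is a legitimate candidate morphism between them, and trivially $f(X)\,\mathrm{id} = \mathrm{id}\,f(Y)$ because $f(X) = f(Y)$. The functor $Z = f(X) \oplus f(Y)$ has preimage with compact closure in $U$ by hypothesis, so part (2) of the structural lemma applies with $\Gamma = \mathrm{id}$ and yields $X\,\mathrm{id} = \mathrm{id}\,Y$, i.e. $X(a) = Y(a)$ for every morphism $a$; hence $X = Y$ and $f$ is injective. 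Once $f$ is injective it is a bijection onto $f(U)$ that respects natural transformations, so $f^{-1}$ is a functor; and part (1) of the structural lemma (which only needs injectivity on objects) upgrades this to a free map: given $W\Gamma = \Gamma V$ in $f(U)$, write $W = f(X)$, $V = f(Y)$, apply the lemma to get $X\Gamma = \Gamma Y$, i.e. $f^{-1}(W)\Gamma = \Gamma f^{-1}(V)$.

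For (3) I would assemble the pieces. Properness gives, by (1), that $f$ is injective with free inverse $f^{-1}: f(U) \to U$. Continuity together with $C$ being finite-dimensional makes $f$ analytic by the earlier proposition, and the inverse function theorem turns injectivity into the statement that $Df(X)$ is nonsingular for every $X$. Restricting to a fixed object assignment $V$, the slice $f_V$ maps an open subset of a finite-dimensional complex space into an open subset of a space of the same dimension (here I use $Q = R$ and that objects are preserved), so injectivity of the Fréchet derivative forces it to be an isomorphism; the classical holomorphic inverse function theorem then makes $f_V$ a local biholomorphism, hence an open map, so $f(U_V)$ is open in $W_V$. Properness at size $V$ makes $f_V$ a closed map, so $f(U_V)$ is also closed in $W_V$. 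Anchoring at the all-zeroes functor --- which lies in both $U$ and $W$ --- to fix the relevant connected component, the image $f(U_V)$ is a nonempty clopen subset and therefore all of $W_V$; thus $f$ is surjective and a bijection $U \to W$. Its inverse is then analytic (classical inverse function theorem), proper (a continuous bijection sends compacta to compacta), and free (from (1)), so $f$ is bianalytic.

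I expect the surjectivity step in (3) to be the main obstacle. The delicate points are verifying that properness really descends to each slice and yields a closed image there, and --- more importantly --- pinning down the connectivity of the target slices $W_V$ so that the clopen image must exhaust $W_V$. This is exactly where the assumption that $U$ and $W$ contain the all-zeroes functor does its work, playing the role of the basepoint and normalization in [Prop]; I would need to argue, using the nc (direct-sum and conjugation) structure together with this basepoint, that the component of $W_V$ reached by $f$ is all of $W_V$. Everything else is either a direct citation of the structural lemma and the inverse function theorem or a routine transfer of standard facts about proper holomorphic maps to the slice $f_V$.
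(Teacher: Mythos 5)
Your proposal is correct and, for parts (1) and (2), essentially identical to the paper's proof: the paper also reduces (1) to (2) via compactness of singletons, applies part (2) of the structural lemma with a scalar natural transformation $\gamma I$ (your choice of $\Gamma = \mathrm{id}$ works equally well, since the lemma's own proof supplies the small parameter $t$), and then gets the freeness of $f^{-1}$ from injectivity by invoking the inverse function theorem --- whose proof is exactly your inlined application of part (1) of the lemma.

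The only divergence is in part (3). Where you reconstruct surjectivity by hand (nonsingular derivative via the free inverse function theorem, hence local biholomorphism and open image; properness, hence closed image; connectivity plus the zero basepoint, hence the image exhausts each slice), the paper simply cites [F, 15.1.5], Rudin's theorem that a proper holomorphic map between regions of $\mathbb{C}^n$ is surjective --- and that theorem is proved by precisely your open-plus-closed argument. Consequently, the ``main obstacle'' you flag, namely the connectivity of the slices $W \cap C^Q_V$, is not resolved any more explicitly in the paper than in your write-up: it is absorbed into the citation (Rudin's theorem is stated for regions, i.e.\ connected open sets), with the all-zeroes hypothesis serving as the intended anchor exactly as in [Prop]. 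Your version is, if anything, more forthright about where that hypothesis must do its work, and your justification that $f^{-1}$ is proper (preimages under $f^{-1}$ of compacta are continuous images of compacta) is cleaner than the paper's one-line assertion.
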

\begin{proof}
    (1) The first case is a special case of the second case, as singleton sets are compact. 

    (2) To prove the second statement, let us consider what happens when our natural transformation $\Gamma$ consists at each coordinate of $\gamma I$, for a sufficiently small scalar $\gamma$. Our earlier lemma then tells us that if $f(X) = f(Y)$, then also $X = Y$. Thus $f$ is injective on objects, and the inverse function theorem proves the rest. 

    (3) Because $f$ is continuous, we know that it is differentiable everywhere, and hence analytic on $C^Q_V$ for each $V: ob(Q) \rightarrow ob(Vec_{f.d.})$. But because it is also proper in each case, the theorem [F, 15.1.5] tells us that $f(U \cap C^Q_V) = W\cap C^Q_V$.  

    Then since $f$ is bijective and analytic, so is $f\ii$. Since $f$ is proper, $f\ii$ is free. Since $f\ii$ is analytic and bijective, it is proper. Thus $f$ is bianalytic. 
\end{proof}

This theorem and proof certainly begs to be made more general, though in all of its details it seems to require the great machinery of complex analysis that has been built up to this point. 

\subsection{Example: Free Analysis}
Consider the rational function $f(x,y) = (x\ii y^2, 3(yx-xy), y(y-x)\ii )$. If we evaluate it on $\tilde X = \left(\m{X&H\\&X},\m{Y&K\\&Y}\right)$, we get \[\begin{split}f(\tilde X) = \biggl(&\M{X\ii Y^2&-X\ii HX\ii Y^2+X\ii KY+X\ii YK\\&X\ii Y^2}, \\& \M{3(YX-XY)&3(KX+YH-HY-XK)\\&3(YX-XY)}, \\ &\M{Y(Y-X)\ii  &K(Y-X)\ii-Y(Y-X)\ii (K-H)(Y-X)\ii\\&Y(Y-X)\ii }\biggl)\end{split}\]
because $\m{X & H\\&X}\ii = \m{X\ii & -X\ii HX\ii\\&X\ii}$. Thus \[\begin{split}
    Df(X, Y)[H, K] = \big(-&X\ii HX\ii Y^2+X\ii KY+X\ii YK, \\&3(KX+YH-HY-XK),\\&K(Y-X)\ii-Y(Y-X)\ii (K-H)(Y-X)\ii\big).
\end{split}\]
If both $Y$ and $K$ are zero, the derivative is zero. Thus the inverse function theorem tells us that $f$ is not injective, nor an invertible function. 

\subsection{Example: $\mathcal S$ch}
\subsubsection{Schur Complement \& Principal Pivot Transform}
Now consider the schur complement as a free map $Sch: C^Q \rightarrow C^{1\circ}$ given by \[Sch(x) = x_{11}-x_{12}x_{22}\ii x_{21}.\] Let us evaluate on $X = \m{A&B\\C&D}$ for familiarity. Then $Sch(X)= A-BD\ii C$. Blowing this up with $\tilde A = \m{A & H_A\\&A}$, we have \[Sch(\tilde X) = \M{A-BD\ii C &H_A-H_BD\ii C+BD\ii H_DD\ii C-BD\ii H_C\\&A-BD\ii C}\]
resulting in \[DSch(X)[H] = H_A-H_BD\ii C+BD\ii H_DD\ii C-BD\ii H_C.\]
This is zero if $C=0, H_A = 0, H_C=0$. Thus $Sch$ is not invertible. 

On the other hand, when we consider the principal pivot transform we observe 
\[Dppt_D( X)[H] = \M{DSch(X)[H] & -D^{-1}H_B+D^{-1}H_DD^{-1}B \\ CD^{-1}H_DD^{-1}-H_CD^{-1} & -D^{-1}H_DD^{-1}}.\]

Because $D^{-1}$ is invertible, we see that immediately that $H_D$ must be zero if $Dppt_D(X)[H]$ is zero; thus $H_C, H_B$ must also be zero. If all three are zero, $H_A$ is forced to be zero as well. Thus we see that the principal pivot transform has an inverse which is also a free map. 

In fact, the inverse is $ppt_D$ itself. 

\subsection{Example: Symmetric Group}
Is it possible to have a functor $\m{x & h \\& x} \in C^{S_3}$, for a non-zero $h$? Certainly not as pure representations of $S_3$, or any symmetric group, as we must have $x^2 = 1$. Thus $\m{x & h\\& x}^2 = \m{x^2 & xh+hx \\& x^2} = \m{x^2 &\\&x^2}$. Thus either $x$ or $h$ must be zero, which breaks this. Thus in a pure theory of representations way, this inverse function theorem is of no use. 
Of course there still are invertible maps; any isomorphism of groups is one such. However, the inverse function theorem gives no information. 

We can, however, analyze such a map by way of an inclusion map $C^{S_3} \xhookrightarrow{} C^Q$, for some free category $Q$ with one object and three arcs. We then can view and analyze it as a classical free function; if this free function turns out to be invertible, then we know that on its original domain --- $U\subset C^{S_3}$ --- it is invertible, with free inverse. If, on the other hand, the modified function is not invertible, the current theory tells us nothing. Refinements would be quite interesting, but are not discussed in this paper. 

\section{Algebraic Structures of Free Functions}
\subsection{Vector Spaces of Free Functions}
\label{inheritance}

The assumption that every ``indexing category" $Q$ is a free category yields the clearest generalization of the classical theory of free analysis, as discussed in earlier examples. Nevertheless, if we only restrict the target category to be free, we have sufficient freedom from structure to do a great deal.

As a result of the lack of algebraic relations, we can define the following. 

\begin{definition}[addition and scalar multiplication on diagrams]
    Choose a free category $Q$ with the same objects as another category $R$. Also choose an additive category $C$ and a mapping $V: ob(Q) \rightarrow ob(C)$. Let us say $Arcs(Q) = \{x_1, \dots, x_n\}$.
    
    If $X, Y\in C^Q_V$, we define $X + Y \in C^Q_V$ by $(X+Y)(x_i)=X(x_i)+Y(x_i)$. This is well defined, because $C$ is an additive category. 

    We then define addition on free maps $f, g: C^R\rightarrow C^Q$ by $(f+g)(X)=f(X)+g(X)$; this addition commutes because $C$ is additive. \vspace{5mm}

We define scalar multiplication on $X\in C^Q$ over an appropriate scalar ring $k$, notated $a\otimes X$, by \[(a\otimes X)(x_i)=a\otimes X(x_i)\] where $a\in k$. Since any ring is a module over the integers, we can always consider $k = \mathbb Z$. 

This extends to the free maps in the same way, with $a\otimes f$ defined by \[(a\otimes f)(X) = a\otimes f(X).\]
    Often, such as when $k$ is the ring of integers or complex numbers,  we evaluate this multiplication as $(af(X))(x_i) = af(X)(x_i)$. Thus we have a $k$-module of free maps. 
\end{definition}

The following relationship remains true, as we would hope.

\begin{proposition}
    If $Q$ is a free category and $C$ is an additive category, every free nc polynomial $f: C^R \rightarrow C^Q$ is a finite sum of free nc monomials. 
\end{proposition}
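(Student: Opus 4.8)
The plan is to exploit the fact that, because $Q$ is free with arcs $y_1, \dots, y_m$, a free map $f : C^R \to C^Q$ is completely determined by the tuple of arc-values $\bigl(f_{y_1}, \dots, f_{y_m}\bigr)$, every other morphism of $f(X)$ being forced by functoriality as a composite of these. By the definition of a polynomial together with Proposition \ref{structure}, each arc-value has the shape $f_{y_j}(X) = \sum_{i=1}^{n_j} k_{ij}\, X(w_{ij})$, a finite $k$-linear combination of images of paths $w_{ij} \in mor(s(y_j), t(y_j))$ of $R$ (using the shared-objects convention to align source and target). The whole content of the statement is that this coordinatewise linear combination can be reassembled, inside the $k$-module of free maps defined just above, as an honest finite sum of monomials.

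First I would, for each pair $(i,j)$, define the single-term map $m_{ij} : C^R \to C^Q$ that sends the arc $y_j$ to the word $w_{ij}$ and sends every other arc $y_{j'}$ to the zero morphism of $C$ (available since $C$ is additive and hence has a zero object). I would then check that each $m_{ij}$ is a free map by the free-map characterization theorem of Section 3: it respects direct sums, since $m_{ij}(X\oplus Y)(y_j) = (X\oplus Y)(w_{ij}) = X(w_{ij})\oplus Y(w_{ij})$ while $0 \oplus 0 = 0$ on the remaining arcs, and it respects natural automorphisms for the identical reason, conjugation passing through the single word $w_{ij}$ and fixing the zero entries. Thus $m_{ij}$ is a monomial away from its degenerate zero entries, and by the definition of scalar multiplication $k_{ij}\otimes m_{ij}$ is again a free map.

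Next I would assemble $f = \sum_{i,j} k_{ij}\otimes m_{ij}$ and verify the equality one arc at a time: evaluating the right-hand side at $y_j$ collapses, by the definition of addition of free maps, to $\sum_{i} k_{ij}\,X(w_{ij}) = f_{y_j}(X)$, the contributions indexed by other arcs vanishing. Since there are finitely many arcs and finitely many terms at each arc, this is a finite sum, which is exactly the claim.

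The main obstacle is purely a matter of bookkeeping at the degenerate coordinates: a map carrying a genuine word on one arc but the zero morphism on the others is not literally of the form $X(x^{k})$ on those zero arcs, so one must argue it still deserves to be counted among the monomials. I would resolve this by leaning on additivity: the zero object of $C$ supplies the zero morphisms, the constant-zero free map is the additive identity of the module, and the single-arc maps $m_{ij}$ are precisely the generators whose $k$-span we are taking. When $k = \mathbb Z$ each $k_{ij}\otimes m_{ij}$ is even literally an iterated sum $m_{ij} + \dots + m_{ij}$ of coefficient-one monomials, so no interpretive latitude is needed there, and the general-$k$ statement is then read inside the $k$-module. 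The one point genuinely requiring care is therefore confirming that these single-arc maps are legitimate objects of the theory, which is exactly what the direct-sum and automorphism check above supplies.
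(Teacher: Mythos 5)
Your proposal is correct and follows essentially the same route as the paper: decompose $f$ arc by arc into single-word, zero-padded maps $m_{ij}$ (the paper's $f_{ij}$) and reassemble $f = \sum_{i,j} k_{ij} m_{ij}$ using the module structure on free maps. Your additional verification that each $m_{ij}$ is free and your explicit handling of the zero entries on the remaining arcs only make explicit what the paper's proof leaves implicit.
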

\begin{proof}
Consider the free polynomial $f$, and let $f(X) = Y$. If we isolate the generators $x_1, \dots, x_n$, we can consider polynomials $f_1, \dots, f_n$ so that $f_i(X)(x_i)=Y(x_i)$ and $f_i(X)(x)=0$ if $x$ is any other morphism. 
    Now $f_i$ is still a polynomial, with $f_i(X)(x_i)=\sum_jk_{ij}y_{ij}$, $y_{ij}$ a morphism in $X$. We can then consider monomials $f_{ij}$ so that $f_{ij}(X)(x_i)=y_{ij}$, which is a monomial. 
    Then $f=\sum_{i,j}k_{ij}f_{ij}$ is a sum of monomials. 
\end{proof}
    
    With these two operations, we see that the free maps $\mathcal F_C(R, Q)$ always form a $\mathbb Z$-module, though depending on $C$ they are often modules over other rings. Indeed, the automatic abelian group structure of these modules leads us to wonder about the nature of a category of free maps. 

\subsection{The Polynomial Category}
    
\begin{definition}[the category of free polynomials on diagrams]
        Let $\mathfrak Q$ be the set of free categories $Q$ with the object set $ob(Q) = \mathcal V_0$, and let $C$ be an additive category. 

        The category $C^{\mathfrak Q}$ takes as objects the categories $C^Q$ with $Q\in \mathfrak Q$, and as morphisms the free polynomials $f:C^Q \rightarrow C^R$. 

        This is indeed a category because composition of free maps is well-behaved. It is a simple exercise to show that the composition of polynomials yields a polynomial. 
\end{definition}

Note that by using the addition defined above, inherited from $C$, the homsets in this category are all abelian groups. This suggests an additive structure to the polynomial category; we might then consider this structure to be analogous to the classical polynomial ring. 

\begin{proposition}
    Let $C$ be an additive category. Then the category $\mathcal F_C$ of free nc polynomials is an additive category. 
\end{proposition}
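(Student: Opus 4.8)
The plan is to verify directly the four defining conditions of an additive category for $\mathcal{F}_C$, i.e.\ the category $C^{\mathfrak{Q}}$: that every homset is an abelian group, that composition is bilinear, that all finite biproducts exist, and that there is a zero object. Three of these I expect to inherit almost verbatim from the ambient additive category $C$, so the real content lies in the bilinearity of composition.

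First I would establish the abelian-group structure on the homset of free polynomials $C^Q \to C^R$. The addition $(f+g)(X)=f(X)+g(X)$ is defined arc-by-arc inside $C$, so closure follows from the fact that a sum of polynomials is again a polynomial (one combines like monomials); associativity and commutativity descend directly from $C$, the zero polynomial is the additive identity, and $(-1)\otimes f$ supplies inverses. The preceding proposition --- every free nc polynomial is a finite sum of free nc monomials --- lets me reduce each of these checks to the level of monomials, where they are transparent.

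Next I would construct finite biproducts. For $C^Q$ and $C^R$ I propose the biproduct $C^{Q\sqcup R}$, the diagram whose arc set is the disjoint union $Arcs(Q)\sqcup Arcs(R)$ over the shared object set $\mathcal V_0$. The projection $p_Q\colon C^{Q\sqcup R}\to C^Q$ sends each $Q$-arc to itself, and the injection $i_Q\colon C^Q\to C^{Q\sqcup R}$ sends each $Q$-arc to itself and each $R$-arc to the zero morphism; both are degree-one maps. I would then verify the biproduct relations $p_Qi_Q=\mathrm{id}$, $p_Qi_R=0$, and $i_Qp_Q+i_Rp_R=\mathrm{id}$, all of which involve only these linear maps and so reduce to bookkeeping on arcs. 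The zero object is exhibited as the diagram with no arcs, and I would check that it is simultaneously initial and terminal, using the zero object of $C$ on the underlying objects and taking care of the scalar-on-loops terms that the polynomial definition permits.

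The main obstacle is the bilinearity of composition, and specifically its left half. Right-distributivity $(f_1+f_2)\circ h=f_1\circ h+f_2\circ h$ is immediate from the pointwise definition of the sum, since $((f_1+f_2)\circ h)(X)=f_1(h(X))+f_2(h(X))$. Left-distributivity $g\circ(f_1+f_2)=g\circ f_1+g\circ f_2$ unwinds to the identity $g(f_1(X)+f_2(X))=g(f_1(X))+g(f_2(X))$, and here I would reduce $g$ to a single monomial via the sum-of-monomials proposition and argue arc-by-arc, invoking the earlier derivative-style observation that it suffices to check such an identity on the generating arcs of the target category. The heart of the argument is then controlling how the substitution of a sum into a path of length $k$ interacts with the bilinear composition supplied by $C$; I expect this to be the step that consumes most of the effort and that pins down precisely the role of the monomial degrees, so I would organize the proof to isolate this verification at the level of individual monomials and generators.
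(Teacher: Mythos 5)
Your outline tracks the paper's own proof closely: pointwise addition making each homset an abelian group, biproducts given by the category $P$ with $Arcs(P)=Arcs(Q)\sqcup Arcs(R)$ together with projection and injection monomials, and the arcless diagram as zero object are all exactly the paper's construction, just spelled out in more detail. The divergence is in the step you correctly flag as the heart of the matter, and there the proposal has a genuine gap: the identity you set out to prove, $g(f_1(X)+f_2(X))=g(f_1(X))+g(f_2(X))$, is simply false once $g$ has degree at least $2$, and no reduction to monomials and generating arcs can rescue it, because the failure already occurs for a single monomial on a single arc. Substituting a sum into a path of length $k$ expands --- by the very bilinearity of composition in $C$ that you invoke --- into $2^k$ cross terms, of which only two are the ones you want. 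Concretely, take $Q=R$ the one-object, one-loop category, $g$ the squaring monomial $g(Y)(x)=Y(x)^2$, and $f_1=f_2=\mathrm{id}$: then
\[
\bigl(g\circ(f_1+f_2)\bigr)(X)(x)=\bigl(2X(x)\bigr)^2=4X(x)^2,
\qquad
\bigl(g\circ f_1+g\circ f_2\bigr)(X)(x)=2X(x)^2,
\]
so postcomposition does not distribute over addition. The pointwise definition of the sum makes only \emph{pre}composition distributivity automatic, $\bigl((f_1+f_2)\circ h\bigr)(X)=f_1(h(X))+f_2(h(X))$, which is the direction you (and the paper) handle easily; postcomposition by $g$ distributes only when $g$ has degree at most one.

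It is worth noting that your honest attempt exposes a weakness in the paper's own argument, which verifies precisely the easy precomposition direction and then dismisses the other with ``the left distribution is similar'' --- it is not similar, and the counterexample above applies verbatim. Indeed, the paper later describes these structures as composition rings, whose defining axiom is exactly the one-sided distributivity $(f+g)\circ h=f\circ h+g\circ h$; a structure in which the other distributive law held for all polynomials would collapse the nonlinear ones. So as written, your plan cannot be completed: to get a genuinely preadditive (hence additive) category by this route one must either restrict the morphisms (e.g.\ to polynomials of degree at most one, where substitution is additive) or weaken the claim to the one-sided, composition-ring form of distributivity. If you keep the statement as given, the proof should at minimum confine the bilinearity check to the direction that actually holds and make explicit what is being asserted about the other.
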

\begin{proof}
Every hom-set is a $\mathbb Z$-module and hence an abelian group, due to the addition inherited from $C$. To show that $\mathcal F_C$ is a pre-additive category, we must show that composition distributes over addition. 

As above, we can restrict ourselves to considering only the image of $Arcs(Q)$ in $C^Q$. Consider $f, g: C^Q \rightarrow C^R$, and $h: C^P\rightarrow C^Q$. Let $X \in P$. Then $h(X)\in C^Q$, and so by the definition of addition of free maps \[f(h(X))+g(h(X))=(f+g)(h(X)).\]
The left distribution is similar. Thus the abelian group action and the composition commute, and we see that this is a preadditive category. 

To establish that $\mathcal F_C$ is an additive category, we need to establish the existence of finitary biproducts. Due to preadditivity, products and co-products coincide, so we need to establish just one. 

Thus consider the product category $C^Q \times C^R = C^P$  where $P$ has generating arcs $A(P) = A(Q)\sqcup A(R)$. Indeed, the argument follows exactly the contours of that of $Set$, because the arcs are simply a finite set; when applied to the functor category $C^P$, rather than $P$, the projection monomials show that this case does not change.
\end{proof}

This leaves a few questions open. Might $\mathcal F_C$ be an abelian category, not just additive? If not, does $C$ being abelian guarantee that $\mathcal F_C$ is abelian as well? We shall not deal with these questions here, as abelian categories are not essential to this paper. 

The categories of rational maps, analytic maps, and general free maps are not discussed here. There are subtleties related to regularity domains, definition, and broad structure respectively that deserve more extensive treatment separately. 

The category of rational functions, for example, has issues with regularity domains that are reminiscent of the category of Banach spaces with unbounded linear operators. The rational functions seem to restrict us to some sub-category analogous to the general linear group, which of course is not closed under addition. 

\subsection{Rings of Free Maps}
\subsubsection{Multiplication on Diagrams}

Hitherto we have considered composition of free maps, which certainly allows a multiplicative structure on the set of maps $\mathcal F: C^Q\rightarrow C^Q$, giving us some sort of ring. Ye in the classical free analysis there is a simple element-wise multiplication structure, which we endeavor to emulate. 

\begin{definition}
    Let $P, Q, R$ all be categories with the same objects, and $C$ an additive category. 

    Then we define the product $\_\times_R \_: C^P\times C^Q \rightarrow C^R$ as a monomial map $m: C^P\times C^Q \rightarrow C^R$ of degree $2$ from the product functor category $C^P\times C^R$ (discussed in the above proof), where one of the arcs is from $P$ and the other from $Q$. 
    
    If the arcs from $C^P$ always appear to the left of the arcs from $C^Q$, then we say this multiplication is a left-multiplication. 

We then extend this definition to free maps in the following way. 

Consider $f: C^{\tilde P}\rightarrow C^P$ and $g: C^{\tilde Q}\rightarrow C^Q$ as well as a product $\_\times_R \_: C^P\times C^Q \rightarrow C^R$. Then we define $f \times_R g$ by 
\[(f\times_Rg)(X\times Y) = f(X)\times_Rg(Y)\]
for each $X \times Y \in C^{\tilde P}\times C^{\tilde Q}.$  
\end{definition}

As an example, let us consider $X \in C^Q$, where $C$ is a category of Hilbert spaces. Denote by $Q^*$ the free category that has $x^*: v\rightarrow u$ whenever $x: u\rightarrow v$ in $Q$. Further, we define the category $Q^*Q$ as having as arcs $x^*x$ for each $x\in Q$. 

Then if $Y \in C^{Q*}$ then we define $Y\times X$ has having $Y\times X(x^*x) = Y(x^*)X(x)$. This then is a product $\_\times\_: Q^*\times Q \rightarrow Q^*Q$. In particular, if we have $X^* \in C^{Q*}$ defined by $X^*(x^*)=X(x)^*$ (in the typical operator theoretic sense) then this product provides us hermitian squares $X^*X:= X^*\times X$.  

\begin{proposition}
    If $R$ is a free category, the product $\_\times_R\_: C^P\times C^Q\rightarrow C^R$ distributes over addition. 
\end{proposition}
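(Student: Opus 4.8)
The plan is to reduce the statement to an arc-by-arc computation and then invoke the bilinearity of composition that is built into any additive category. First I would recall that, since $R$ is free, a functor in $C^R$ is completely and freely determined by its values on $Arcs(R)$: any assignment of morphisms of $C$ with the correct sources and targets extends uniquely to a functor, and two functors in $C^R$ coincide precisely when they agree on every arc. This is exactly the hypothesis that makes the degree-$2$ monomial product $X \times_R Y$ well defined in the first place — there are no relations in $R$ to respect — and it is also what will let me verify the distributive identities by checking them only on arcs rather than on all of $mor(R)$.

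Next I would fix an object assignment $V$ so that the relevant sums are defined, and take $X_1, X_2 \in C^P_V$ together with $Y \in C^Q_V$. For an arc $z \in Arcs(R)$ the product is, by definition of the monomial map, $(X \times_R Y)(z) = X(p_z)\circ Y(q_z)$, where $p_z \in Arcs(P)$ and $q_z \in Arcs(Q)$ are the two arcs whose length-two path defines $z$ (the left-multiplication convention places the $P$-factor on the left; the opposite convention is handled symmetrically). Because every object is pinned down by $V$, the sources and targets of $X(p_z)$ and $Y(q_z)$ match as required for this composition in $C$ to be defined.

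Then I would compute, using the arc-wise definition of addition in $C^P$ together with the bilinearity of composition in $C$,
\[
\bigl((X_1+X_2)\times_R Y\bigr)(z) = \bigl(X_1(p_z)+X_2(p_z)\bigr)\circ Y(q_z) = X_1(p_z)\circ Y(q_z) + X_2(p_z)\circ Y(q_z),
\]
which is exactly $\bigl((X_1\times_R Y)+(X_2\times_R Y)\bigr)(z)$. Right distributivity is identical, using linearity of composition in its second argument. Since the two sides agree on every arc of the free category $R$, they agree as functors, giving $(X_1+X_2)\times_R Y = X_1\times_R Y + X_2\times_R Y$ and the mirror identity.

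The only real obstacle here is bookkeeping rather than mathematics: I must ensure that the additions on $C^P$, $C^Q$, and $C^R$ all refer to a common object assignment $V$ (so that every sum and every composition $X(p_z)\circ Y(q_z)$ is defined), and I must be explicit that equality of functors in $C^R$ is tested on arcs — the step that genuinely uses the freeness of $R$. Once the product of functors is settled, the extension to free maps $f \times_R g$ follows by evaluating $(f\times_R g)(X\times Y) = f(X)\times_R g(Y)$ and applying the functor-level identity pointwise.
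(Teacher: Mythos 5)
Your proposal is correct and follows essentially the same route as the paper's proof: reduce to a single arc of the free category $R$, where the product is a composition $X(p_z)\circ Y(q_z)$, and then invoke the bilinearity of composition in the additive category $C$. The only difference is organizational --- you prove the identity first for functors and then extend to free maps pointwise, and you are more explicit about the object assignment $V$ and about why freeness of $R$ lets equality be tested on arcs, all of which the paper leaves implicit.
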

\begin{proof}
    We want to show that $(f+g)\times_R h = f\times_R h+ g\times_R h$. 

    Consider an arc $x$ of $R$, for which $\times_R$ is described by  $x = yz$ where $y$ is an arc of $P$ and $z$ an arc of $Q$. Then the left here is \[(f+g)(X(y))\circ h(X(z))\]
    which boils down to distributivity in the additive category $C$, as all of these morphisms are ultimately morphisms in $C$. 
    All other situations are analogous. 
\end{proof}

And finally the promise of the Leibniz rule made in \ref{liebniz} can be kept, establishing simultaneously this multiplication and the derivative. \label{product} 

\begin{proposition}[Liebniz Rule]
    For a left multiplication $\times_R$, the liebniz rule $D(f\times_Rg)(X\times Y)[H] = Df(X)[H]\times_R g(Y) + f(X)\times_RDg(Y)[K]$ holds. 
\end{proposition}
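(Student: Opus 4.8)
The plan is to evaluate the product map on the simultaneously blown-up inputs and read off the super-diagonal block. First I would pin down the direction: since $f\times_R g$ takes inputs in $C^{\tilde P}\times C^{\tilde Q}$, its directional derivative is taken in a direction of the form $H\times K$, so the statement's ``$[H]$'' on the left should be read as $[H\times K]$. Set $\tilde X = \m{X & H\\& X}$ and $\tilde Y = \m{Y & K\\& Y}$. Because direct sums in a product category are computed componentwise, the blow-up of $X\times Y$ in the direction $H\times K$ is exactly $\tilde X\times \tilde Y$; hence $D(f\times_R g)(X\times Y)[H\times K]$ is by definition the super-diagonal block of $(f\times_R g)(\tilde X\times\tilde Y)=f(\tilde X)\times_R g(\tilde Y)$.

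Next I would apply the definition of the directional derivative to each factor separately. This gives $f(\tilde X)=\m{f(X) & Df(X)[H]\\& f(X)}$ and $g(\tilde Y)=\m{g(Y) & Dg(Y)[K]\\& g(Y)}$, understood as functors whose value at every arc is a $2\times 2$ block-upper-triangular morphism of $C$. Now I would evaluate the product at a generating arc $x$ of $R$. By definition $\times_R$ is a degree-$2$ left monomial, so $x=yz$ with $y\in Arcs(P)$ and $z\in Arcs(Q)$, and $(f(\tilde X)\times_R g(\tilde Y))(x)=f(\tilde X)(y)\circ g(\tilde Y)(z)$. Each factor is a $2\times 2$ block-upper-triangular morphism, so this is a single block-matrix multiplication.

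Carrying out that multiplication, the two diagonal blocks both equal $f(X)(y)\circ g(Y)(z)=(f(X)\times_R g(Y))(x)$, which confirms the value of $f\times_R g$, while the super-diagonal block is
\[Df(X)[H](y)\circ g(Y)(z)\;+\;f(X)(y)\circ Dg(Y)[K](z),\]
which is precisely $(Df(X)[H]\times_R g(Y))(x)+(f(X)\times_R Dg(Y)[K])(x)$. Since $R$ is free, the off-diagonal ``derivative'' part of a block-upper-triangular functor is determined by its values on the arcs of $R$ (equivalently, by the earlier proposition such a derivative vanishes exactly when it vanishes on every arc), so this per-arc identity propagates to every morphism and yields the Leibniz rule.

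The main obstacle is verifying that $\times_R$ genuinely multiplies the two block-upper-triangular functors block-by-block, so that the cross term of the block product splits into exactly the two Leibniz summands. This hinges on $\times_R$ being a single degree-$2$ composition at each arc, with one factor drawn from $P$ and one from $Q$; the left-multiplication hypothesis is what fixes the order of the two cross terms, placing $Df(X)[H]$ to the left of $g(Y)$ and $f(X)$ to the left of $Dg(Y)[K]$. For a product that mixed the two orderings one would instead obtain a symmetrized sum, so the left-multiplication hypothesis is used essentially here.
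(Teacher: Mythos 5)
Your proposal is correct and follows essentially the same route as the paper's proof: evaluate the degree-$2$ monomial defining $\times_R$ on the blown-up inputs $\tilde X\times\tilde Y$, perform the $2\times 2$ block multiplication at each arc $x=yz$, and read off the super-diagonal block as $Df(X)[H]\times_R g(Y)+f(X)\times_R Dg(Y)[K]$. The only differences are cosmetic refinements on your part --- making explicit that the direction is $H\times K$, and invoking the earlier arcs-determine-the-derivative proposition to propagate the per-arc identity, where the paper simply notes the computation is analogous at each arc.
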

\begin{proof}
    We have an equality $D(f\times_Rg)(X)[H] = Dm(X)[H]$, where $m$ is a degree 2 monomial from $C^P \times C^Q \rightarrow C^R$, where each arc has a component from $X \in C^P$ and one from $Y\in C^Q$. Note that because $X\times Y$ is an element of $C^P \times C^Q$, we know that each object is evaluated to the same thing in $X, Y$; that is, if $v$ is an object of $P$, then $X(v) = Y(v)$. 

    Consider the arc $x \in P$, and hence $m(Z)(x)$ for $Z = X\times Y$. This will look like $X(y)\circ Y(z)$ for $x=yz$, with $y\in Arcs(P)$ and $z \in Arcs(Q)$. Then if we evaluate $y$ on $\tilde X =\m{X & H_X\\&X}$ and $z$ on $\tilde Y = \m{Y & H_Y\\&Y}$ with $\tilde Z = \tilde X\times \tilde Y$, we have \[m(\tilde Z)(x) = \M{X(y)&H_X(y)\\&X(y)}\M{Y(z) & H_Y(z)\\&Y(z)} = \M{X(y)Y(z) & H_X(y)Y(z)+X_Z(y)H_Y(z) \\& X(y)Y(z)}.\]
This shows us the general picture; it is of course analogous for each arc, so we have \[Dm(\tilde Z)[H] = H_XY+XH_Y.\]

    Thus if we evaluate on $f: C^{\tilde P}\rightarrow C^P, g: C^{\tilde Q}\rightarrow C^Q$, we replace $X \in C^P$ with $f(X) \in C^P$ and $Y\in C^Q$ with $g(Y)\in C^Q$; we see that $\tilde X = \m{f(X) & Df(X)[H] \\ & f(X)}$ and $\tilde Y = \m{g(Y) & Dg(Y)[K] \\ & g(Y)}$ and we thus observe \[m(\tilde X\times \tilde Y) = \M{f(X)g(Y) & Df(X)[H]g(Y) + f(X)Dg(Y)[K]\\&f(X)g(Y)}\] which tells us that \[Dm(Z)[H] = D(f\times_Rg)(X\times Y)[H \times K] = Df(X)[H]g(Y) + f(X)Dg(Y)[K].\]
\end{proof}

For a more general multiplication a similar rule will hold, but at each arc of $R$ it will vary depending on which order the multiplication is there. The details can be inferred from the above proof. 

\subsubsection{Rings of Free Maps}
With left-multiplications thus defined, a fuller ring structure is finally available to us; we have had available a ring with multiplication as composition, but now we are able to construct a distinct multiplication more analogous to that of classical polynomials. 

Consider, then, the set of all free maps $f: C^Q \rightarrow C^P$. 
There is an addition structure, inherited from the additive category $C$. If we then select some left multiplication $\_\times_P \_: C^P \times C^P \rightarrow C^P$, we have a ring. 
Note that in general this ring is not unital, despite the standard example in free analysis being unital.

Because we have composition, the rings constructed in this way are in fact composition rings; they are even particularly nice composition rings, since composition distributes over addition. This draws a clear analogy to the univariate polynomial ring $k[x]$, the classic composition ring, though $k[x]$ is unital and an integral domain. 

This leaves a fruitful area of discovery open. What sorts of multiplications $\_\times_P\_$ will allow be unital? Indeed, what free category structures allow for unital categories; is it ever possible to construct a unital ring for some $Q$ where not all arcs are loops? What other nice properties can be extracted from certain products? What theorems (if any) on $k[x]$ can be replicated for these rings, whether on polynomials or otherwise? 

\subsection{Example 1: Free Analysis}
\subsubsection{Vector Space}
In the free analysis context we are dealing with all free categories that have exactly one object.

If we have two free functions $f, g: U \in M^3 \rightarrow M^2$, perhaps $f = (f_1, f_2)$ and $g = (g_1, g_2)$ they can be summed to get $f+g=(f_1+g_1, f_2+g_2)$. This is implicitly given by $(f+g)(X) = (f_1(X)+g_1(X), f_2(X)+g_2(X))$ for each $X \in U$, as these are all morphisms in the additive polynomial category. 

To obtain vector spaces, we simply need to involve scalar multiplication, with $\alpha(f+g) = (\alpha f_1+\alpha g_1, \alpha f_2 + \alpha g_2)$.

\subsubsection{Polynomial Category}
The objects of the polynomial category in the free analysis context are tuples of square matrices of all sizes, $M^d = \bigcup_{i\in \mathbb N}M_i^d(\mathbb C)$.

The morphisms of  the polynomial category consist of polynomial maps $f: M^d \rightarrow M^{\tilde d}$ given by $f= (f_1, \dots, f_{\tilde d})$ for all $d,\tilde d \in \mathbb N$, which is to say $\tilde d$-tuples of complex nc polynomials in $d$ variables. The addition, multiplication, and composition are standard entrywise nc polynomial addition, multiplication, and composition; that composition distributes over addition is easier to see here than in general. Call this category $\mathcal P_C$.

If $M^d = M^d(\mathbb C)$, then each homomorphism space $P(M^d, M^{\tilde d})$ is a complex vector space. 

The categorical product here is given by $M^d \times M^e = M^{d+e}$; in terms of the elements, we say that $X = (X_1, \dots, X_d) \in M_n^d$ and $Y = (Y_1, \dots, Y_e)\in M_n^e$ has $X\times Y = (X_1, \dots, X_d, Y_1, \dots, Y_e) \in M_n^{d+e}$. 

Thus $M^d \oplus M^e = M^{d+e}$. We can sum morphisms in the same way; if we have $f: M^d \rightarrow M^{\tilde d}$ and $g: M^e \rightarrow M^{\tilde e}$ we have $f\oplus g = (f, g): M^{d+e}\rightarrow M^{\tilde d +\tilde e}$. 

Let us then set $\mathcal P_C$ to be our base category, with $C$ the set of finite dimensional vector spaces, and study free maps $f: \mathcal P_C^Q \rightarrow \mathcal P_C^P$. 

Let us once again choose  $Q, P$ to be be $\langle x\rangle = \langle x_1, \dots, x_d\rangle$ and $\langle y\rangle=\langle y_1, \dots, y_{\tilde d}\rangle $ respectively. We construct this space analogous to the classical case by saying that $\mathcal M^d := \bigcup_{i}\bigoplus^d M^i$. The natural transformations will be polynomials $p: M^i \rightarrow M^j$, whereas the natural automorphisms here will be invertible free polynomials $p: M^i \rightarrow M^i$, for appropriate $i, j\in \mathbb N$. 

Such an $f: \mathcal M^d \rightarrow \mathcal M^{\tilde d}$ is given by $f(x) = (f_1(x), \dots, f_{\tilde d}(x))$ where each $f$ is a free map, such as a polynomial. This is completely unchanged from the previously discussed case, as a result of proposition \ref{structure}. 

The fact that each $x: M^e\rightarrow M^{e}$ is a free nc polynomial changes nothing here; this is not a surprise, as the morphisms that $\varphi \in \mathcal P_C$ deals with are themselves very often maps between function spaces. We are simply taking a further step. 

\subsubsection{Rings of Maps}
the typical multiplication is defined on $f, g: M^d\rightarrow M^c$ by $f\times_c g = (f_1g_1, \dots, f_cg_c)$, but this is by no means the only definition. 

This multiplication structure is unital; if we choose $e = (1, \dots, 1)$, then $f \times_c e = e \times_c f = f$. It is not in general an integral domain, 

Other multiplications are available; since the categorical product of $f:M^d\rightarrow M^c$ and $g:M^b \rightarrow M^a$ is given by $f\times g = (f_1, \dots, f_c, g_1, \dots, g_a) \in M^{d+a}$, a perfectly feasible left multiplication would be \[f\times_c g = (f_1g_1, \dots, f_cg_1).\]

There are many other possibilities, as all elements can be multiplied together because they are all in $Hom(X(v))$ for the vertex $v$ of $Q$ and any functor $X \in C^Q$.  

The standard free polynomial rings make use of the standard entrywise product, and it seems very unlikely that a different choice of product would be generally better. As a result, the multiplication is usually notated by $fg= f\times _c g $.

\subsection{Example 2: $\mathcal S$ch}
\subsubsection{Vector Spaces}
Each mapping $u \mapsto V(u) \in ob(C),v \mapsto V(v) \in ob(C)$,  defines an $R$-module, specifically \[Hom(V(u), V(v)) \oplus Hom (V(u), V(u)) \oplus Hom (V(v), V(v))\oplus Hom (V(v), V(u))\] Each of these is an abelian group, and they are all minimally $\mathbb Z-$modules. If $C$ is a category of vector spaces, say over $\mathbb C$, then the mapping $V$ defines a vector space. 

But these vector spaces allow us to define an $ R-$module or vector space over the free maps. Thus $\alpha f+ \beta g \in mor(C^Q\rightarrow C^R)$; we define it at $X$ by $\alpha f(X) + \beta g(X)$; at a given arc, say $p$, this is in particular $\alpha f_p(X) + \beta g_p(X)$. If $f_p(X) = PQP + P$ and $g_p(X) = PX + YP +P $, then \[(\alpha f + \beta g)_p(X) = \alpha PQP + \beta PX + \beta YP + (\alpha +\beta)P\]
which is certainly in the vector space $Hom(s(P), t(P))$. This can of course easily be described as a polynomial with abstract characters $x, y, p, q$, as long as those are known to be only populatable with products from a diagram of shape $Sch$. 

This is a context much like that of the quiver path-algebra, but we require source and target of all summands to match. It could be thought of as similar to a ``graded path algebra."

Thus in this general context we have $f_p = pqp+p$ and $g_p= px+yp+p$, and so $(\alpha f+ \beta g)_p = \alpha pqp +\beta px +\beta yp +(\alpha+\beta)p$; we see that it is really just the free polynomials with some additional requisite relations. 

\subsubsection{Polynomial Category}
See the polynomial category in the free analysis example. The generalization here is completely analogous to this paper's generalization of classical free analysis to the functor case. 

\subsubsection{Rings of Maps}
One possible multiplication $\_\times_{Sch} \_: C^{Sch}\times C^{Sch} \rightarrow C^{Sch}$ is defined by $f(X) \times_{Sch} g(X)$ mapping the arcs in the following way. \[\begin{split}
    x \mapsto f_x(X)\circ g_x(X)\\y\mapsto f_y(X) \circ g_y(X)\\p \mapsto f_p(X) \circ g_x(X)\\q\mapsto f_q(X) \circ g_y(X)
\end{split},\]
or pictorially 

\begin{center}
    \begin{tikzpicture}[node distance=2cm, auto]
        \node (u) {$\mathcal H$};
        \node (v) [below of = u] {$\mathcal K$};

        \draw [->] (u) to [bend left=15] node {$f_{21}$} (v);
        \draw[->] (v) to [bend left=15] node {$f_{12}$} (u);
        \path[->] (u) edge [loop above=15] node[auto] {$f_1$} ();
        \path[->] (v) edge [loop below=15] node[auto] {$f_2$} ();
        
        \draw [->] (u) to [bend left =70] node {$g_{21}$} (v);
        \draw[->] (v) to [bend left = 70] node {$g_{12}$} (u);
        \path[->] (u) edge [loop, swap] node[auto] {$g_1$} ();
        \path[->] (v) edge [loop, out = 225, in = 315, looseness = 8, swap] node[auto] {$g_2$} (v);

        \node (w) [node distance = 5cm, right of = u] {$\mathcal H$};
        \node (t) [below of = w] {$\mathcal K$};

        \draw [->] (w) to [bend left] node {$f_{12}g_2$} (t);
        \draw[->] (t) to [bend left] node {$f_{21}g_1$} (w);
        \path[->] (w) edge [loop, out = 45, in = 135, looseness = 4, swap] node[auto] {$f_1g_1$} (w);
        \path[->] (t) edge [loop, out = 225, in = 315, looseness = 4, swap] node[auto] {$f_2g_2$} (t);

        \draw[->] (u) to node {$Sch$} (w);
        \draw[->] (v) to node {$Sch$} (t);
        
    \end{tikzpicture}
\end{center}
There has the nice advantage that if $g_x(X) = I_x$ and $g_y(X) = I_y$ then $f(X) \times_{Sch} g(X) = f(X)$. Thus if $g$ has $g_x = I$ and $g_y = I$ everywhere in the domain, $g$ will act like a right identity. It is not, however, a left identity. In fact, it seems quite unlikely that a left identity could exist for this multiplication; further, such a right identity $g$ is not at all unique. 

With such a multiplication, we have some sort of (non-unital) ring. We have a multiplication (with a right identity), we have addition, and we have composition. 

\subsubsection{An Integral Domain}
A somewhat more interesting ring is given for a family of functions $f: C^Q \rightarrow C^{1\circ}$. Here $1\circ$ is somewhat a misnomer; technically it must have $2$ objects, like $Q$. However, we simply give no arcs to or from one of the objects, and leave the other with exactly one loop.

Thus $f \times_{1\circ}g$ is just another map $C^Q \rightarrow C^{1\circ}$ given by a single multiplication of maps. Thus this ring is an integral domain. However, we have lost composition; $f$ does not take functors in $1\circ$.

\section{Final Questions}
We end by listing a few prominent questions. 

\begin{itemize}
    \item Are there any free maps that can't be characterized by a functor $f_*: R\rightarrow \tilde Q$?
    \item What do rational functions look like in this context?
    \begin{itemize}
        \item What is the appropriate topology?
    \end{itemize}
    \item What is the best version of Ax-Grothendieck and the Jacobian conjecture in this context?
\item Under what circumstances is it possible to define an addition for non-free categories?
\begin{itemize}
    \item Are there alternative structures that are more promising that grant analogous structure?
\end{itemize}
\item   What should a realization theory, as in[Rat], look like here?
\begin{itemize}
    \item Might the classical problem of isometries provide insight here?
\end{itemize}
\item Can we create a model-realization, and establish an Oka-Weil type theorem?
\item What is the appropriate sum-of-squares positivstellensatz as in [SoS] or [H17]?
\item What reasonable categories of free maps transcend the polynomial category? 
\begin{itemize}
    \item How closely related are they to the categories of Banach spaces with unbounded operators?
\end{itemize}
\item Under what circumstances do the polynomial rings from $C^Q \rightarrow C^P$ with product $\_\times_P\_$ behave like polynomial rings? 
\begin{itemize}
    \item In particular, which rings are unital, or integral domains? 
    \item What classes of products preserve which properties?

\end{itemize}
\item What can this tell us about the theory of representations? 
\end{itemize}
\newpage
\section*{References}
[KVV] Kalyuzhnyi-Verbovetski\i, D., and V. Vinnikov. ``Foundations of noncommutative function theory." \textit{preparation} 1, no. 2: 3.

[IFT] Pascoe, James E. ``The inverse function theorem and the Jacobian conjecture for free analysis." \textit{Mathematische Zeitschrift} 278 (2014): 987-994.

[PPT] Pascoe, J. E., and Ryan Tully-Doyle. ``Monotonicity of the principal pivot transform." \textit{Linear Algebra and its Applications} 643 (2022): 161-165.

[RSoS] Klep, Igor, James Eldred Pascoe, and Jurij Volčič. ``Regular and positive noncommutative rational functions." \textit{Journal of the London Mathematical Society} 95, no. 2 (2017): 613-632.

[Groth] Augat, Meric L. ``The free Grothendieck theorem." \textit{Proceedings of the London Mathematical Society} 118, no. 4 (2019): 787-825.

[SoS] Helton, J. William. ```` Positive" noncommutative polynomials are sums of squares." \textit{Annals of Mathematics} (2002): 675-694.

[SoHS] McCullough, Scott, and Mihai Putinar. ``Noncommutative sums of squares." \textit{Pacific Journal of Mathematics} 218, no. 1 (2005): 167-171.

[Inv] Pascoe, James Eldred. ``Invariant structure preserving functions and an Oka-Weil Kaplansky density type theorem." (2023): 465-482.

[H17] Volčič, Jurij. ``Hilbert’s 17th problem in free skew fields." In \textit{Forum of Mathematics, Sigma}, vol. 9, p. e61. Cambridge University Press, 2020.

[Prop] Helton, J. William, Igor Klep, and Scott McCullough. ``Proper analytic free maps." \textit{Journal of Functional Analysis} 260, no. 5 (2011): 1476-1490.

[Loci] Klep, Igor, and Jurij Volcic. ``Free loci of matrix pencils and domains of noncommutative rational functions." \textit{Comment. Math. Helv} 92, no. 1 (2017): 105-130.

[Sym] Grinberg, Darij, and Nadia Lafrenière. ``The one-sided cycle shuffles in the symmetric group algebra." \textit{arXiv preprint arXiv:2212.06274} (2022).

[Rat] Volčič, Jurij. ``Matrix coefficient realization theory of noncommutative rational functions." \textit{Journal of Algebra} 499 (2018): 397-437.

[A] Dummit, David Steven, and Richard M. Foote. \textit{Abstract algebra}. Vol. 3. Hoboken: Wiley, 2004.

[C] Riehl, Emily. \textit{Category theory in context}. Courier Dover Publications, 2017.

[F] Rudin, Walter. \textit{Function theory in the unit ball of Cn}. Springer Science \& Business Media, 2008.

[S] Grinberg, Darij. \textit{An introduction to the symmetric group algebra}.

\end{document}